\theoremstyle{plain}%
\newtheorem{theorem}{Theorem}[subsection]%
\newtheorem*{proposition*}{Proposition}%
\newtheorem{lemma}[theorem]{Lemma}%
\newtheorem*{lemma*}{Lemma}%
\newtheorem{corollary}[theorem]{Corollary}%
\theoremstyle{remark}%
\newtheorem{example}[theorem]{Example}%
\newtheorem{remark}[theorem]{Remark}%
\newtheorem{conjecture}[theorem]{Conjecture}%
\theoremstyle{definition}%
\newtheorem{definition}[theorem]{Definition}%
\numberwithin{equation}{section}
\newcommand{\mc}[1]{\ensuremath{\mathcal{#1}}} % shortcuts the \mathcal command to \mc
\newcommand{\mr}[1]{\ensuremath{\mathrm{#1}}} % shortcuts the \mathrm command to \mr
\newcommand{\dd}{\ensuremath{\partial}}
\newcommand{\dr}{\ensuremath{\mathrm{d}}}
\newcommand{\res}[1]{\ensuremath{\vert_{#1}}}
\newcommand{\one}{\ensuremath{\mathbf{1}}}
\newcommand{\eps}{\varepsilon} % uses more curvy epsilon instead of the standard one
\DeclareMathOperator*{\Tr}{Tr} % the trace operator command
\DeclareMathOperator*{\tr}{trace} % the trace operator command
\DeclareMathOperator{\vspan}{span}
\newcommand{\gF}{\mathfrak{F}}
\newcommand{\gV}{\mathfrak{V}}
\newcommand{\gE}{\mathfrak{E}}
\newcommand{\cW}{\mathcal{W}}
\newcommand{\cE}{\mathcal{E}}
\newcommand{\cP}{\mathcal{P}}
\newcommand{\cS}{\mathcal{S}}
\newcommand{\cM}{\mathcal{M}}
\newcommand{\iid}{\stackrel{i.i.d.}{\sim}}
\newcommand{\EE}{\mathbb{E}}
\newcommand{\NN}{\mathbb{N}}
\newcommand{\RR}{\mathbb{R}}
\newcommand{\spd}{\ensuremath{\mathcal{P}}}
\newcommand{\sym}{\ensuremath{\mathcal{S}}}
\newcommand{\W}{\ensuremath{\mathcal{W}}}
\newcommand{\F}{\ensuremath{\mathcal{W}}}
\newcommand{\G}[1]{\ensuremath{\mathcal{G}_{#1}}} % grove of some waldtop.
\newcommand{\Exp}{\ensuremath{\mathrm{Exp}}}
\newcommand{\Log}{\ensuremath{\mathrm{Log}}}
\newcommand{\st}{\mbox{ such that }}
\newcommand{\Span}{\ensuremath{\mathrm{span}}}
\newcommand{\rank}{\ensuremath{\mathrm{rank}}}
\newcommand{\col}{\ensuremath{\mathrm{col}}}
\DeclareMathOperator*{\argmin}{\mbox{\rm argmin}}
\newcommand{\waldtop}{wald topology\xspace}
\newcommand{\waldtops}{wald topologies\xspace}
\newcommand{\waldspace}{wald space\xspace}
\newcommand{\Waldspace}{Wald space\xspace}
\newcommand{\wald}{wald\xspace}
\newcommand{\walds}{w\"alder\xspace}
\newcommand{\grove}{grove\xspace}
\newcommand{\groves}{groves\xspace}
\newcommand{\bhvspace}{BHV space\xspace}
\begin{document}

%\title[Wald Space]{Topology and Geometry of the Wald Space of Phylogenetic Trees}
\title[Wald Space]{Foundations of the Wald Space for Phylogenetic Trees}

%\author{Jonas Lueg}
%\address{Felix-Bernstein-Institute for Mathematical Statistics in the Biosciences, Georg-August-Universit\"at, G\"ottingen, Germany}
%\email{jonas.lueg@stud.uni-goettingen.de}
%\author{Maryam K. Garba}
%\address{School of Mathematics, Statistics and Physics, Newcastle University, UK, and Department of Mathematical Sciences, Bayero University, Kano, Nigeria}
%\email{m.k.garba1@ncl.ac.uk}
%\author{Tom M.~W. Nye}
%\address{School of Mathematics, Statistics and Physics, Newcastle University, UK}
%\email{tom.nye@ncl.ac.uk}
%\author{Stephan F. Huckemann}
%\address{Felix-Bernstein-Institute for Mathematical Statistics in the Biosciences, Georg-August-Universit\"at, G\"ottingen, Germany}
%\email{huckeman@math.uni-goettingen.de}

\author{Jonas Lueg}
\address[Jonas Lueg and Stephan~F. Huckemann]{Felix-Bernstein-Institute for Mathematical Statistics in the Biosciences\\ Georg-August-Universit\"at\\ G\"ottingen\\ Germany}
\email{jonas.lueg@stud.uni-goettingen.de}
\author{Maryam~K. Garba}
\address[Maryam~K. Garba and Tom~M.~W. Nye]{School of Mathematics, Statistics and Physics\\ Newcastle University\\ UK}
\email{m.k.garba1@ncl.ac.uk}
\author{Tom~M.~W. Nye}
\email{tom.nye@ncl.ac.uk}
\author{Stephan~F. Huckemann}
\email{huckeman@math.uni-goettingen.de}

\date{Sep. 5, 2022}

\begin{abstract}

Evolutionary relationships between species are represented by phylogenetic trees, but these relationships are subject to uncertainty due to the random nature of evolution. 
A geometry for the space of phylogenetic trees is necessary in order to properly quantify this uncertainty during the statistical analysis of collections of possible evolutionary trees inferred from biological data.
Recently, the wald space has been introduced: a length space for trees which is a certain subset of the manifold of symmetric positive definite matrices.
In this work, the wald space is introduced formally and its topology and structure is studied in detail. %, providing a foundation for proving more subtle results about the geometry of wald space in future research. 
In particular, we show that wald space has the topology of a disjoint union of open cubes, it is contractible, and by careful characterization of cube boundaries, we demonstrate that wald space is a Whitney stratified space of type (A). %topologically stratified in a certain well-defined way.
Imposing the metric induced by the affine invariant metric on symmetric positive definite matrices, we prove that wald space is a geodesic Riemann stratified %metric 
space.  
A new numerical method is proposed and investigated for construction of geodesics, computation of Fr\'echet means and calculation of curvature in wald space.
This work is %also 
intended to serve as a mathematical foundation for further geometric and statistical research on this space. %$ by introducing three essential characterisations of the elements of wald space, each of which is used extensively in proofs.

\end{abstract}

%\keywords{phylogeny, metric geometry}

\maketitle

% \newpage
% \tableofcontents

\section{Introduction}

\subsection{Background}

Over billions of years, evolution has been driven by unobserved random processes. 
Inferences about evolutionary history, which by necessity are largely based on observations of present-day species, are therefore always subject to some level of uncertainty. 
Phylogenetic trees are used to represent possible evolutionary histories relating a set of species, or taxa, which form the leaves of each tree. 
Internal vertices on phylogenetic trees usually represent speciation events, and edge lengths represent the degree of evolutionary divergence over any given edge. 
%Phylogenetic trees play a vital role in molecular biology and biomedicine, and have been used, for example, to track the evolution and spread of covid-19 strains.  
Trees are typically inferred from genetic sequence data from extant species, and a variety of well-established statistical methods exist for phylogenetic inference \citep{felsenstein_inferring_2003}. 
These generally output a sample of trees -- a collection of possible evolutionary histories compatible with the data. 
Moreover, evolutionary relationships can vary stochastically from one gene to another, giving a further source of random variation in samples of trees \citep{maddison1997gene}. 
It is then natural to pose statistical questions about such samples: for example identifying a sample mean, identifying principal modes of variation in the sample, or testing differences between samples. 
This, in turn, calls for the design of suitable metric spaces in which each element is a phylogenetic tree on some fixed set of taxa, and which are ideally both biologically substantive and computationally tractable.

The design of these \emph{tree spaces} is aggravated by the continuous and combinatorial nature of phylogenetic trees and furthermore, a metric space that is also a geodesic space (so that distance corresponds to the length of shortest paths, also called geodesics) is to be preferred, as it facilitates computation of statistics like the Fr\'echet mean significantly.
The first geodesic space of phylogenetic trees was introduced by \cite{billera_geometry_2001} and is called the \bhvspace, where BHV is an acronym of the authors Billera, Holmes and Vogtmann.
For a fixed set of species $L=\{1,\dots,N\}$, also called taxa or labels, with $3\leq N\in\NN$, \bhvspace is constructed via embedding all phylogenetic trees into a Euclidean space $\RR^M$, where $M\in\NN$ is exponentially growing in $N$, and then taking the infinitesimally induced intrinsic distance on this embedded subset, giving a metric space.
As a result, \bhvspace features a very rich and computationally tractable geometry as it is CAT(0) space, i.e. %which includes amongst other things that it is 
globally of non-positive curvature, and thus having unique geodesics %or shortest paths 
and Fréchet means.
% COMMENT: is global non-positive curvature no the same as CAT(0)?
Starting with the development of a polynomial time algorithm for computing geodesics and thereby overcoming the combinatorial difficulties (\cite{owen_fast_2011}), many algorithms have been derived for computing statistics like sample means (\cite{bacak_computing_2014, miller_polyhedral_2015}) and variance (\cite{brown_mean_2020}), confidence regions for the population mean (\cite{willis_confidence_2016}) and principal component analysis \cite{nye_principal_2011, nye_algorithm_2014, nye_principal_2016}, Feragen et al. 2013). 
The BHV paper has had considerable influence more widely on research in phylogenetics (see \cite{suchard2005stochastic} for example), non-Euclidean statistics (\cite{marron2014overview}), algebraic geometry (\cite{ardila2006bergman}), probability theory (\cite{evans2006rayleigh}) and other area of mathematics (\cite{baez}).
In addition to the BHV tree space, a variety of alternative tree spaces have been proposed, both for discrete and continuous underlying point sets of trees. 
For example, in the \emph{tropical tree space} \citep{Speyer2004,Monod2022} edge weights are times, not evolutionary divergence, thus allowing for a distance metric between two trees involving tropical algebra. % distance metric between two trees is defined via an algebraic construction involving the distances between taxa induced by the trees.
 
The geometries of the BHV and tropical tree spaces are unrelated to the methods used to infer phylogenies from sequence data. 
In contrast, there are substantially different tree spaces that originate via the evolutionary genetic substitution models used by molecular phylogenetic methods for tree inference (see \cite{Yang2006} for details of these).
Evolutionary substitution models are essentially Markov processes on a phylogenetic tree with state space $\Omega$.
For DNA sequence data, the state space is $\Omega=\{\mr{A},\mr{C},\mr{T},\mr{G}\}$. 
% Choosing the transition probabilities between the states leads to a different substitution model.
Under an appropriate set of assumptions on the substitution model, each tree determines a probability distribution on the set of possible letter patterns at the labelled vertices $L$, (i.e.~a probability mass function $p\colon\Omega^N \to [0, 1]$), $N=|L|$, and this can be used to compute the likelhiood of any tree.
% The marginal distribution of the resulting Markov process at the labelled vertices $L = \{1,\dots,N\}$ is used with a sequence alignment to compute the likelihood of any given tree. 
%In addition to inferring trees from alignments, rates in the underlying Markov substitution process are also often simultaneously inferred. 
% Recently, there have been developments of tree spaces that, in contrast to \bhvspace, take into account the origin of the phylogenetic trees themselves: trees are computed using molecular phylogenetic methods such as maximum likelihood or Bayesian inference that use the characterisation of each phylogenetic tree via a likelihood function (i.e., phylogenetic trees are the parameters), plugging in aligned sequence data (can be DNA, RNA or protein sequences). 
% The likelihoods are inferred from a substitution model that is essentially a Markov process indexed by the respective trees with state set $\Omega$ (e.g.~for DNA, $\Omega=\{\mr{A},\mr{C},\mr{T},\mr{G}\}$, the four nucleobases).
% The model needs to be chosen beforehand, e.g.~for DNA sequences the parameters are usually the transition probabilities between the four nucleobases at time 0.
At about the same that \bhvspace was introduced, \cite{kim_slicing_2000} provided a geometrical interpretation of tree estimation methods, where, given the substitution model, an embedding of phylogenetic trees into an $\vert\Omega\vert^N$-dimensional simplex using the likelihoods was discussed informally.
The concept was then picked up by \cite{moulton_peeling_2004}, introducing the topological space known as the \emph{edge-product space}, taking not only into account phylogenetic trees but also forests, characterising each forest via a vector containing correlations between all pairs of labels in $L$ under the induced distribution $p$.
This representation is then an embedding of all phylogenetic forests into a $N(N-1)/2$-dimensional space.
Using the same characterisation of phylogenetic trees via distributions on $\Omega^N$ obtained from a fixed substitution model, \cite{garba_probabilistic_2018} considered probabilistic distances to obtain metrics on tree space, but these metrics do not yield length spaces.
Therefore, in \cite{garba_information_2021}, the fact that all phylogenetic trees with a fixed fully resolved tree topology are a manifold was used to apply the Fisher information geometry for statistical manifolds on each such piece of the space to eventually obtain a metric space that is a length space. 
Additionally, instead of using substitution models with finite state space $\Omega$, \cite{garba_information_2021} considered a Gaussian model with state space $\Omega = \RR$ 
% with moments matching those of the two-state symmetric model 
in order to deal with the problem of computational tractability. % (see also \cite{garba_information_2021-1} for comparisons of the induced geometries).
The distributions characterising phylogenetic trees are then zero-mean multivariate Gaussians, and sums over $\Omega^N$ for discrete $\Omega$ are replaced with integrals over $\RR^N$. 
The characterisation with this Gaussian model together with the choice of the Fisher information geometry and the extension to phylogenetic forests ultimately leads to the \emph{\waldspace}, which is essentially an embedding of the phylogenetic forests into the real symmetric $N\times N$-dimensional strictly positive definite matrices $\spd$ (\cite{garba_information_2021}). 
The elements of \waldspace are called \emph{\walds} (``Wald'' is a German word meaning ``forest'').

\begin{figure}[ht]% ORIGINAL NAMES OF FIGURES: bhv4_tree1, bhv4_tree2, bhv4_tree3.
    \centering%
    \hfill
    \includegraphics[width=0.6\linewidth]{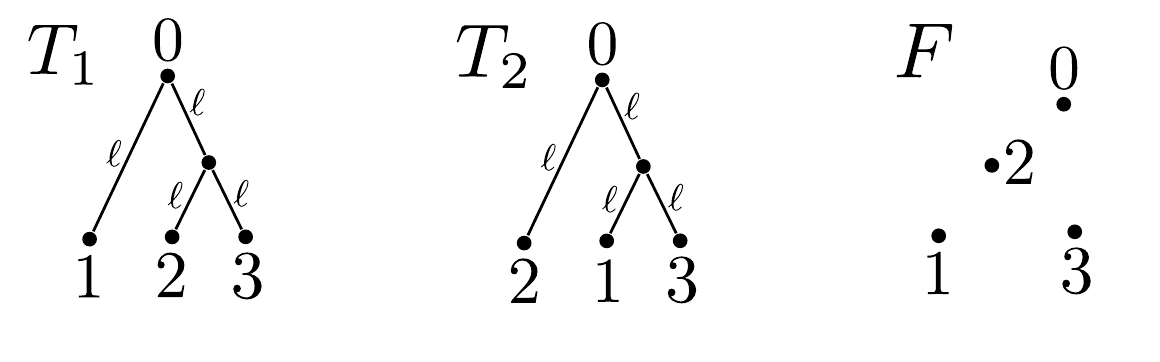}\hfill\hfill
    \caption{Two trees $T_1$ and $T_2$ with positive edge length $\ell\in(0,\infty)$. 
    Letting $\ell\to\infty$, the intuitive limit element for both trees is the forest $F$, as species are considered not related if their evolutionary distance approaches infinity. 
    In \waldspace, the distance between $T_1$ and $T_2$ goes to zero accordingly as $\ell\to\infty$ and their limit is the forest $F$ that is also contained in \waldspace. In \bhvspace however, their distance goes to infinity as $\ell\to\infty$ and $F$ is not an element of the space.}
    \label{fig:introduction-example-biologically-reasonability}
\end{figure}

The geometry of \waldspace is fundamentally different from \bhvspace (\cite{garba_information_2021, lueg_wald_2021}), as illustrated in Fig.~\ref{fig:introduction-example-biologically-reasonability}, which also underlines the biological reasonability of the \waldspace.
Loosely speaking, \waldspace can be viewed topologically as being obtained by compactifying the boundaries at the ``infinities'' of \bhvspace, which comes with the price of fundamentally changing the geometry that is not locally Euclidean anymore. We avoid, though, the compactification at the ``zeroes'' of the edge-product space proposed by \cite{moulton_peeling_2004} which suggests itself by mathematical elegance. It is biologically questionable, however, as it would allow different taxa to agree with one another.
In \cite{garba_information_2021}, apart from defining the \waldspace, certain properties of the space were established, such as showing the distance between any two points to be finite, and algorithms for approximating geodesics were proposed. 
In \cite{lueg_wald_2021}, a compact definition of \waldspace as well as more refined algorithms for approximating geodesics were introduced.

%Wald space is a stratified space, i.e.~a union of manifolds glued together in a compatible way, where those manifolds (called \emph{\groves}) are not locally flat.
%Thus, proofs of theoretical results about \waldspace have much more of a differential geometric than combinatorial nature.

\subsection{Contribution of this paper}

Previous work on \waldspace established the space as a length space, and this paper was originally motivated by the aim of proving the existence of a minimising geodesic betweeen every two points, i.e.~establishing \waldspace as a geodesic metric space, since the existence of geodesics is crucial for performing statistical analysis within the space. 
This aim is achieved in Theorem~\ref{thm:waldspace-is-geodesic-metric-space} below. 
The proof involves three essential characterisations of the elements of wald space (as graph-theoretic forests; as split systems; and as certain symmetric positive definite matrices).  
In turn, these enable a rigorous analysis of the topology of \waldspace, such as Theorem~\ref{thm:whitney} about its stratified structure, in addition to providing a foundation for further research on this space. 

The remainder of the paper is structured as follows. 
In \Cref{scn:Wald-Def} we define the \waldspace $\W$ for a fixed set of labels $\{1,\dots,N\}$ as equivalence classes of partially labelled graph-theoretic forests. 
The topology on $\W$ is obtained by defining a map $\psi$ from $\W$ into the set of $N\times N$ symmetric positive definite matrices and requiring $\psi$ to be a homeomorphism onto its image. 
We then provide an equivalent, but more tractable, definition in terms of \emph{splits} or biparitions of labels, and an equivalent map $\phi$ from split-representations of \walds to symmetric positive definite matrices. 
In particular, we show that \waldspace can be identified topologically with a disjoint union of open unit cubes. 
Each open unit cube is called a \emph{\grove}. 
In \Cref{scn:topology-of-waldspace} we describe the structure or stratification of the \waldspace by investigating on how the \groves are glued together along their respective boundaries.  
This is achieved by first providing in \Cref{scn:inequalities} a detailed characterization of the matrices in the image $\phi(\W)$ in terms of a set of algebraic constraints on the matrix elements. 
Using this characterization, for example, we show that \waldspace is contractible. 
Then in \Cref{scn:groves-end} we use a partial ordering of forest topologies, first introduced by \cite{moulton_peeling_2004} to establish results about the boundaries of groves and the stratification of \waldspace. 
This culminates in \Cref{scn:whitney} in which we prove \waldspace satifies certain axioms at grove boundaries, collectively known as \emph{Whitney condition (A)} \citep{pflaum_analytic_2001}, which ensure that tangent spaces behave well as the boundaries of strata are approached. 
We then go on to consider the induced affine invariant or information geometry on \waldspace in \Cref{scn:geometry}. 
We show the topology induced by the metric is the same as the previous topology defined using $\phi$, and hence show that $\W$ is a geodesic metric space (i.e.~every two points are connected by a minimising geodesic). 
Finally in \Cref{scn:numerical}, we use a new algorithm for computing approximate geodesics to explore the geometry on \waldspace, specifically computing sectional curvatures within \groves and Alexandrov curvatures for fundamental examples. 
We also investigate the behaviour of the sample Fr\'echet mean, in particular with reference to the issue of \emph{stickiness} observed in in \bhvspace (see for example~\cite{hotz_sticky_2013, huckemann_sticky_2015} for a discription).
In \Cref{scn:discussion} we discuss the contributions of the paper and some of the many open questions and unsolved problems about the geometry of \waldspace.

\subsection{Notation}\label{scn:notation}

Throughout the paper we use the following notation and concepts, where points 4--6 below can be found in standard textbooks of differential geometry, e.g. \cite[Chapter XII]{lang_fundamentals_1999}:
\begin{enumerate}
    \item  
    $2\leq N\in \NN$ is a fixed integer defining the \emph{set of labels} $L = \{1,\dots,N\}$. 
    \item $\sqcup_{i=1}^n A_i$ denotes the union if the $A_i$ are pairwise disjoint ($i=1,\ldots,n$).
    \item 
    When we speak of \emph{partitions}, no empty sets are allowed.
    \item 
    For a set $E$, its cardinality is denoted by $\vert E\vert$.
    \item 
    $\cS$ is the Euclidean space of real symmetric $N\times N$ matrices.
    \item 
    $\cP$ is  the space of real symmetric and positive definite $N\times N$ matrices. It is an open cone in $\cS$ and carries the topology and smooth manifold structure  inherited from $\cS$. In particular, every tangent space $T_P\cP$ at $P \in \cP$ is isomorphic to $\cS$.
    \item 
    We equip $\cP$ with the \emph{affine invariant Riemannian metric}, also called information geometry, yielding a Cartan-Hadamard manifold. Its metric tensor is given by 
    $$ \langle X,Y\rangle_P = \tr (P^{-1}XP^{-1}Y)$$
    for $X,Y \in \cS \cong T_P\cP$ and the unique geodesic $\gamma$ through $P=\gamma(0),Q=\gamma(1)\in \cP$  is given by
    $$ (-\infty,\infty)\to \cP,~ t\mapsto \gamma(t) = \sqrt{P} \exp\left(t \log\left(\sqrt{P}^{-1}Q\sqrt{P}^{-1}\right)\right)\sqrt{P} $$
    with the usual matrix exponential and logarithm, respectively. Here, $\sqrt{P}$ denotes the unique positive definite root of $P$.
    \item 
    The Riemannian metric induces a metric on $\cP$ denoted by $d_\cP$ and for a rectifiable curve $\gamma : [a,b] \to \cP$ let $L_{\cP}(\gamma)$ be its length.
\end{enumerate}

In a word of caution we note that the term \emph{topology} appears in two contexts: (i) as a system of open sets defining a topological space and (ii) as a branching structure of a graph-theoretic forest. 
The latter is standard in the phylogenetic literature, despite the potential for confusion.

\section{Definition of Wald Space via Graphs and Splits}
\label{scn:Wald-Def}
% \newpage
% \subsection{Defining Wald Space}\label{scn:Wald-Def}
%\blue{\subsection{Based on Graphs: Phylogenetic Forests}}

\subsection{From a Graph Viewpoint}\label{scn:graphs}

This section recalls definitions and results from \cite{garba_information_2021} and \cite{lueg_wald_2021}.

\begin{definition}
    A \emph{forest} is a triple $(\gV,\gE,\ell)$, where
    \begin{enumerate}[leftmargin=1.1cm, start=1, label={\textnormal{(PF\arabic*)}}]\itemsep0.5em
      \item \label{item:pf1}
        $(\gV,\gE)$ is a graph-theoretical undirected forest with \emph{vertex} set $\gV$ such that $L\subseteq\gV$ and that $v\in \gV\setminus L$ implies $\deg(v)\geq 3$, where $\deg(v)$ is the degree of a vertex $v$, and \emph{edge} set $\gE\subseteq \big\{\{u,v\}:u,v\in \gV,\, u\neq v \big\}$, 
      \item \label{item:pf2}
        and $\ell = (\ell_e)_{e\in\gE} \in (0,\infty)^\gE$.
    \end{enumerate}
\end{definition}

\begin{definition}\label{def:wald-graph-representation}
    Two forests $(\gV, \gE,\ell), (\gV', \gE',\ell')$ are \emph{topologically} equivalent,
    %$(\gV, \gE,\ell)\sim (\gV', \gE',\ell')$, 
    if there is a bijection  $f\colon \gV\to \gV'$ such that\\[-0.5em]
\begin{enumerate}[label=(\roman*)]\itemsep0.5em
  \item[(i)]  $f(u) = u$ for all $u\in L$,
  \item[(ii)] $\{u,v\} \in \gE \Leftrightarrow \{f(u),f(v)\} \in\gE'$. 
  \end{enumerate}
  They are \emph{phylogenetically} equivalent if additionally
 \begin{enumerate} 
    \item[(iii)]  $\ell_{\{u, v\}} = \ell'_{\{f(u), f(v)\}}$ for all edges $\{u, v\}\in \gE$.\\[-0.5em]
\end{enumerate}
Moreover, \begin{enumerate}
    \item 
Every phylogenetic equivalence class is called a \emph{phylogenetic forest} and denoted by $\gF = [\gV, \gE,\ell]$. 
\item $\W$ is the set of all phylogenetic forests.
\item Every topological equivalence class is called a \emph{forest topology} and denoted by $[\gF] = [\gV,\gE]$.
\end{enumerate}
\end{definition}

\begin{definition}
    Let $(\gV,\gE,\ell)$ be a forest. 
    For two leaves $u,v \in L$ let $\gE(u,v)$ be the set of edges in $\gE$ of the unique  path between $u$ and $v$, if $u$ and $v$ are connected, else set $\gE(u,v)=\emptyset$.
    Further define a mapping of forests via 
    $$\psi: (\gV,\gE,\ell) \mapsto  \big(\rho_{uv}\big)_{u,v=1}^N\in \cS$$
    where
    \begin{equation}\label{eq:definition-phi-map-ell}
         \rho_{uv} = \left\{
        \begin{array}{cl}
             \displaystyle\exp\bigg(-\sum_{e\in \gE(u,v)} \ell_e\bigg)& \mbox{ if } u\neq v\mbox{ and } \gE(u,v) \neq \emptyset \\[1.0em]
             0 &  \mbox{ if } u\neq v\mbox{ and } \gE(u,v) = \emptyset \\[1.0em]
             1 &  \mbox{ if } u=v\end{array}
        \right. 
    \end{equation}
    for $1\leq u,v\leq N$.
\end{definition}

By definition, the above matrix is the same for two forests representing the same phylogenetic forest. It is even positive definite and characterizes phylogenetic forests uniquely as the following theorem shows. 

\begin{theorem}[{\cite{garba_information_2021}}, Theorem~4.1]\label{thm:phi-well-defined-injective-into-spd} 
    For every forest $(\gV,\gE,\ell)$, we have $$\psi(\gV,\gE,\ell)\in \cP$$
    and for any two forests $(\gV,\gE,\ell)$ and $(\gV',\gE',\ell')$ we have
    $$\psi(\gV,\gE,\ell)=\psi(\gV',\gE',\ell')$$
    if and only if
    $$[\gV,\gE,\ell]=[\gV',\gE',\ell']\,.$$
\end{theorem}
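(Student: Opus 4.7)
The plan is to prove the two claims separately: first that $\psi(\gV,\gE,\ell)\in\cP$, and second that $\psi$ descends to, and is injective on, phylogenetic equivalence classes.

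For positive definiteness, I would first reduce to the case of a single connected component. If $(\gV,\gE)$ has several components, then $\rho_{uv}=0$ whenever $u,v\in L$ lie in different components, so after permuting $L$ the matrix $\psi(\gV,\gE,\ell)$ is block diagonal with one block per component (isolated labels contribute a $1\times 1$ block equal to $1$), and it suffices to treat a single tree. For a tree, I would exploit a probabilistic interpretation: choose an arbitrary root $r\in\gV$, set $X_r\sim\mathcal{N}(0,1)$, and propagate along each oriented edge $e=\{u,v\}$ (with $u$ the parent of $v$) via $X_v=e^{-\ell_e}X_u+\eta_e$, where the $\eta_e\sim\mathcal{N}(0,1-e^{-2\ell_e})$ are mutually independent and independent of $X_r$. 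A short induction shows $\mathrm{Var}(X_v)=1$ for all $v\in\gV$, and iterating along the unique $u$-$v$ path together with the Markov property yields $\mathrm{Cov}(X_u,X_v)=\exp\bigl(-\sum_{e\in\gE(u,v)}\ell_e\bigr)=\rho_{uv}$. Hence $\psi(\gV,\gE,\ell)$ is the covariance matrix of the Gaussian vector $(X_u)_{u\in L}$, which is automatically positive semi-definite. Strict positive definiteness follows because each leaf $u\in L$ is incident to an edge $e_u$ whose noise $\eta_{e_u}$ contributes to $X_u$ but to no other $X_v$ with $v\in L\setminus\{u\}$, rendering $(X_u)_{u\in L}$ linearly independent.

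For the equivalence statement, one direction is immediate from the definition of $\psi$: a bijection fixing $L$ pointwise, preserving the edge set, and preserving every edge length leaves every $\rho_{uv}$ unchanged. For the converse, suppose $\psi(\gV,\gE,\ell)=\psi(\gV',\gE',\ell')$. The zero/nonzero pattern among off-diagonal entries recovers the partition of $L$ into connected components for each forest, and both partitions must agree. Within each connected component, the function $d(u,v)=-\log\rho_{uv}$ is an additive (i.e.\ tree) metric on the corresponding subset of $L$, obtained by restricting the edge-length path metric of the tree to its leaves. I would then invoke the classical tree-metric reconstruction theorem (due in various forms to Buneman, Zare\v{c}ki\u{\i}, and others): an additive metric on a finite set uniquely determines, up to isomorphism fixing that set pointwise, a tree with positive edge lengths and no degree-two internal vertices that realises it. Applied component by component, this produces the required bijection $f\colon\gV\to\gV'$ with $f|_L=\mathrm{id}_L$ mapping edges to edges and preserving edge lengths, which is exactly phylogenetic equivalence.

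The main obstacle I anticipate is invoking (or, if a self-contained treatment is preferred, reproving) the tree-metric reconstruction theorem, as this is the part of the argument that genuinely uses the combinatorial structure of trees rather than a straightforward matrix or probabilistic manipulation. Once that result is in hand, the remaining steps are essentially bookkeeping: one must verify that the constraint ``$\deg(v)\geq 3$ for every $v\in\gV\setminus L$'' in the definition of a forest matches the normal form used in the tree-metric theorem, so that no subdivision ambiguities remain, and one must handle the trivial case of isolated labels. Both the probabilistic construction and the block-diagonal reduction for the first part are otherwise short and mechanical.
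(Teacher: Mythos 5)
The paper does not prove this result itself; it is imported verbatim from \cite{garba_information_2021}, Theorem~4.1. Your overall route --- reduce to one tree by block-diagonality, realise $\psi(\gV,\gE,\ell)$ as the covariance matrix of a Gaussian Markov process on the tree, and invoke tree-metric reconstruction for injectivity --- is the natural one and in fact is the conceptual origin of wald space, so there is nothing wrong with the strategy.

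There is, however, a genuine gap in the step giving \emph{strict} positive definiteness. You assert that every $u\in L$ is incident to an edge $e_u$ whose innovation $\eta_{e_u}$ enters $X_u$ but no other $X_v$ with $v\in L\setminus\{u\}$. That is false whenever $u$ is a labelled vertex of degree at least two: the forest axioms only force $\deg(v)\ge 3$ for \emph{unlabelled} vertices, so labelled interior vertices are allowed (for example the path $1$--$2$--$3$ with all three vertices labelled). For such a $u$, the parent-edge noise propagates to every labelled descendant of $u$ (and $u$ has at least one, since some child subtree ends in a degree-one, hence labelled, vertex), while the child-edge noises do not enter $X_u$ at all; so no incident edge is ``private.'' The chosen root has no parent edge either. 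The repair is standard: after rooting and ordering $\gV$ by depth, the linear map sending $(X_r,(\eta_e)_{e\in\gE})$ to $(X_v)_{v\in\gV}$ is unitriangular with positive diagonal, so the covariance of the \emph{full} vector $(X_v)_{v\in\gV}$ is a congruence transform of $\mathrm{diag}\bigl(1,\,1-e^{-2\ell_e},\dots\bigr)$ and hence positive definite (using $\ell_e>0$); the matrix $\psi(\gV,\gE,\ell)$ is then the principal submatrix indexed by $L$, and principal submatrices of positive definite matrices are positive definite. Once that is fixed, your injectivity half is sound: the zero pattern of $\psi$ recovers the partition of $L$ into components, $-\log\rho_{uv}$ is an additive metric on each part, and uniqueness of the realising $X$-tree (as in \cite[Theorem~7.2.6]{semple_phylogenetics_2003}, the same result the paper invokes in the proof of Theorem~\ref{thm:phi-spd-repr}) gives the phylogenetic equivalence; the only point to record is that the two given objects are honest forests with injective labelling by hypothesis, so no multiply-labelled-vertex ambiguity arises.
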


In consequence of Theorem \ref{thm:phi-well-defined-injective-into-spd}, $\psi$ induces a well defined injection from $\W$ into $\cP$. 
In slight abuse of notation we denote this mapping also by $\psi$, that is
\begin{equation}\label{eq:definition-phi-map-ell-wald-space}
    \psi\colon\W\to\spd,\quad \gF=[\gV,\gE,\ell]\mapsto\psi(\gF)\coloneqq\psi(\gV,\gE,\ell).
\end{equation}
\begin{definition}\label{def:wald-space}
    The \emph{\waldspace} is the topological space $\W$ equipped with the unique topology under which the map $\psi\colon\W\to\spd$ from \Cref{eq:definition-phi-map-ell-wald-space}
    is a homeomorphism onto its image.
\end{definition}

%\blue{\subsection{Based on Splits: Wälder}}
% \subsection{Weighted Splits of Leaf partitions}
\subsection{From a Split Viewpoint}\label{scn:splits}

If $(\gV,\gE,\ell)$ is a representative of a phylogenetic forest $\gF$, there is $K\in \NN$ such that the graph-theoretic  forest $(\gV,\gE)$ decomposes into $K$ disjoint nonempty graph-theoretic trees
$$ (\gV_1,\gE_1),\ldots,(\gV_K,\gE_K)\,.$$
In particular, this decomposition induces a partition $L_1,\ldots,L_K$ of the leaf set $L$ with $L_\alpha \subseteq \gV_\alpha$, $1\leq \alpha \leq K$.

Furthermore for $1\leq \alpha \leq K$, taking away an edge $e \in \gE_\alpha$ decomposes $(\gV_\alpha,\gE_\alpha)$ into two disjoint graph-theoretic trees that \emph{split} the leaf set $L_\alpha$ into two disjoint subsets $A$ and $B$.

The representation of phylogenetic trees via splits is more abstract than as graphs but more tractable. 
We first introduce the weighted split representation and then show equivalence of the concepts.

\begin{definition}\label{def:wald-split-representation}
    A tuple $F = (E,\lambda)$ with $E\neq \emptyset$ is a \emph{split-based phylogenetic forest} if
    \begin{enumerate}[label=(\roman*)]
        \item 
            there is $1\leq K \leq N$ and a partition $L_1,\ldots,L_K$ of the leaf set $L$;
        \item 
            every element $e\in E$ is of the form $e=\{A,B\}$,  called a \emph{split}, where for some $1\leq \alpha\leq K$, $A,B$ is a partition of $L_\alpha$; $E_\alpha$ denotes the elements in $E$ that are splits of $L_\alpha$; for notational ease we write interchangeably 
            $$ e=\{A,B\} = A \vert B = a_1\ldots a_r\vert b_1\ldots b_s = a_1\ldots a_r \vert B = A\vert b_1\ldots b_s\,, $$
            whenever $A=\{a_1,\ldots,a_r\}$, $B=\{b_1,\ldots,b_s\}$;
        \item 
            all splits in $E_\alpha$ ($1\leq \alpha \leq K$) are pairwise \emph{compatible} with one another, where two splits $A\vert B$ and $C\vert D$ of $L_\alpha$ are \emph{compatible} with one another if one of the sets below is empty:
        $$ A\cap C,\quad A\cap D,\quad B \cap C,\quad B\cap D\,;$$ 
        \item 
            for all distinct $u,v\in L_\alpha$, $1\leq \alpha \leq K$, there exists a split $e=A\vert B\in E_\alpha$ such that $u\in A$ and $v\in B$;
        \item 
            $\lambda\coloneqq (\lambda_e)_{e\in E} \in (0,1)^{ E}$. 
    \end{enumerate}
    Moreover $F_\infty$ with $E=\emptyset$ and void array $\lambda$ is the completely disconnected \emph{split-based phylogenetic forest}  with leaf partion $\{1\}, \ldots, \{N\}$.
    
\end{definition}

The partition $L_1,\ldots,L_K$ is not mentioned explicitly in the definition of  a split-based phylogenetic forest $F = (E,\lambda)$ since it can be derived from $E$ via $\{L_1,\dots,L_{\tilde{K}}\} \coloneqq \big\{A\cup B\colon A\vert B \in E\big\}$, where $\tilde{K}\leq K$, and for all $u\in L\setminus\bigcup_{\alpha=1}^{\tilde{K}} L_\alpha$, the singleton $\{u\}$ is added to the collection to obtain $L_1,\dots,L_K$.

\begin{theorem}\label{thm:definitions-of-phylo-forest-equivalent}
    There is a one-to-one correspondence between split-based phylogenetic forests $F = (E,\lambda)$ from \Cref{def:wald-split-representation} and phylogenetic forests $\gF = [\gV, \gE, \ell]$ from \Cref{def:wald-graph-representation} with $\ell$ and $\lambda$ related by
       \begin{equation}\label{eq:lambda-ell}
            \lambda_s \coloneqq 1 - \exp\big(-\ell_e\big) \,.
        \end{equation}
     with an arbitrary but fixed representative  $(\gV,\gE,\ell)$. Furthermore, there is a one-to-one correspondence between compatible split sets $E$ from \Cref{def:wald-split-representation} (i) - (iv), and phylogenetic forest topologies $[\gV, \gE]$.
\end{theorem}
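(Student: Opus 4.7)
The plan is to reduce the equivalence to the classical splits-equivalence theorem of Buneman applied separately to each connected component of the graph-theoretic forest, combined with the observation that $\ell \mapsto 1 - e^{-\ell}$ is a monotone bijection from $(0,\infty)$ onto $(0,1)$.

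For the forward direction, given a representative $(\gV,\gE,\ell)$ of a phylogenetic forest $\gF$, I would decompose $(\gV,\gE)$ into connected components $(\gV_\alpha,\gE_\alpha)$, $1 \leq \alpha \leq K$, and set $L_\alpha := L \cap \gV_\alpha$. Each $L_\alpha$ is nonempty, since a component consisting only of unlabeled vertices would, by \ref{item:pf1}, be a finite tree in which every vertex has degree $\geq 3$, contradicting the existence of a leaf in every nontrivial finite tree. Given $e \in \gE_\alpha$, removing $e$ splits $\gV_\alpha$ into two subtrees; the same degree argument, applied inside each subtree, forces both to meet $L_\alpha$, yielding a bona fide bipartition $s_e = A_e \vert B_e$ of $L_\alpha$. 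I would then set $E := \bigsqcup_\alpha \{s_e : e \in \gE_\alpha\}$ and $\lambda_{s_e} := 1 - \exp(-\ell_e)$, and verify the axioms of \Cref{def:wald-split-representation}: (i), (ii), (v) are immediate; compatibility (iii) is the standard observation that any two edges of a tree produce an empty four-way intersection of leaf subsets; and separation (iv) holds because the unique path between distinct $u,v \in L_\alpha$ contains an edge whose removal separates them. Phylogenetic (respectively topological) equivalence of two representatives induces a label-fixing bijection of edges respecting $\ell$ (resp.\ without the $\ell$ condition), hence produces the same $(E,\lambda)$ (resp.\ the same $E$), so the construction descends to the appropriate equivalence classes.

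For the inverse direction I would invoke Buneman's splits-equivalence theorem in the following form: a compatible split system on a finite set $X$ that separates every pair of points in $X$ determines, uniquely up to topological equivalence, an unrooted graph-theoretic tree with label set $X$, all unlabeled vertices of degree $\geq 3$, and edges in canonical bijection with the given splits. Applied to each block $L_\alpha$ of the partition extracted from $E$ as described in the paragraph following \Cref{def:wald-split-representation}, this yields components $(\gV_\alpha, \gE_\alpha)$; adjoining an isolated vertex for each $u \in L$ outside $\bigcup_\alpha L_\alpha$ and setting $\ell_e := -\log(1 - \lambda_{s_e}) \in (0,\infty)$ produces a forest whose phylogenetic class is the required preimage. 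Mutual invertibility of the two constructions then reduces to the fact that both use the same edge-to-split correspondence together with the invertibility of $\lambda \leftrightarrow \ell$, and the second assertion about forest topologies and compatible split sets is just the lengthless shadow of this bijection. The principal nontrivial ingredient is Buneman's theorem itself, and the main care needed in invoking it here is to allow labels to occur at internal vertices of the $(\gV_\alpha,\gE_\alpha)$, corresponding to $u \in L_\alpha$ for which no trivial split $\{u\} \vert L_\alpha \setminus \{u\}$ appears in $E_\alpha$; the standard Buneman construction accommodates this without modification, so the expected obstacle is essentially bookkeeping rather than a deep new difficulty.
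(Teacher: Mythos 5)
Your proposal is correct and follows essentially the same route as the paper's proof: decompose the graph-theoretic forest into connected tree components, invoke the classical splits-equivalence theorem (Buneman 1971, equivalently \cite[Theorem~3.1.4]{semple_phylogenetics_2003}) on each component, and transport edge lengths to edge weights via the monotone bijection $\ell \mapsto 1 - e^{-\ell}$. The paper organizes this as ``Case I ($K=1$)'' followed by ``Case II ($K>1$)'' with a representative-independence check, which matches your per-component reduction and the remark that the construction descends to equivalence classes.
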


\begin{proof}
   Case I. Suppose $K=1$, i.e. $\gF$ comprises only one tree: We take recourse to \cite[Theorem~3.1.4]{semple_phylogenetics_2003} who establish a one-to-one correspondence of compatible split sets $E$ from \Cref{def:wald-split-representation} (i) - (iv), and phylogenetic forest topologies $[\gV, \gE]$, in case these are taken from graph-theoretic trees. Indeed, our phylogenetic forest topologies correspond to \emph{isomorphic X-trees} there (our $L$ is $X$ there and the \emph{labelling map} from \cite[Definition~2.1.1]{semple_phylogenetics_2003} is the identity in our case) and for every representative $(\gV, \gE)\in [\gV, \gE]$ there is a unique compatible split set $E$ from \Cref{def:wald-split-representation} (i) - (iv) ((iv). is a consequence of $L\subseteq\gV$, $K = 1$ and injectivity of the labelling map). Vice versa, there is a bijection $e \mapsto s_e, \gE \to E$ that, removing the edge $e$ from $\gE$ produces two disconnected trees, yields a unique split $s=s_e=A\vert B$ of the leaf set $L= A\cup B$. This yields the second assertion, namely a one-to-one correspondence between compatible split sets $E$  from \Cref{def:wald-split-representation} (i) - (iv) and phylogenetic forest topologies $[\gV,\gE]$ in case of underlying graph-theoretic trees. The first assertion follows from the correspondence in \eqref{eq:lambda-ell},  which thus yields, due to phylogenetic equivalence in Definition \ref{def:wald-graph-representation} (iii),  a one-to-one correspondence between split based phylogenetic forests $F=(E,\lambda)$ and phylogenetic forests $[\gV,\gE,\ell]$, in case of underlying graph-theoretic trees.

    Case II. Suppose $\gF$ comprises several $K>1$ trees: Here,  consider two phylogenetic forests representatives $(\gV,\gE,\ell),(\gV',\gE',\ell')\in[\gV,\gE,\ell]$.
        Due to \Cref{def:wald-graph-representation}, (i) and (ii), both $(\gV,\gE,\ell)$ and $(\gV',\gE',\ell')$ have the same number of connected components, each of which is a graph-theoretic tree and the bijection $f$ from \Cref{def:wald-graph-representation} restricts to bijections between the corresponding graph-theoretic trees. For each of these, Case I ($K=1$) is applicable, thus yielding the assertion in the general case. 
\end{proof}

In consequence of \Cref{thm:definitions-of-phylo-forest-equivalent} we introduce the following additional notation. 

\begin{definition}\label{def:split-based-advanced}
    From now on, we identify split-based phylogenetic forests $F = (E,\lambda)$ with phylogenetic forests $\gF = [\gV,\gE,\ell]$ and say that $F$ is a \emph{\wald}, in plural \emph{\walds}, so that $F\in \W$, and use interchangeably the name split and edges for the elements of $E$ (as they are ``edges'' in equivalence classes). In particular, the $\lambda_e, e\in E$, from \Cref{def:wald-split-representation}, 5., are called \emph{edge weights}. Furthermore, 
    \begin{enumerate} 
        \item
        $[F] \coloneqq E$ also denotes the \emph{topology} $[\gV,\gE]$ of $F$ and 
    $$\cE := \{E : \exists \lambda \in (0,1)^E\st (E,\lambda) \mbox{ is a split-based phylogenetic forest}\}\cup\{\emptyset\}\,$$
    denotes the set of all possible topologies;
        \item 
        \walds of the same topology $E$ form a \emph{grove}
        \begin{equation*}
            \G{E} =\big\{F=(E',\lambda')\in\F\colon E = E'\big\},
        \end{equation*}
        \item 
        for any two $u,v \in L$ with leaf partition $L_1,\ldots,L_K$, define 
        $$ E(u,v) := \{A\vert B: \exists\, 1\leq \alpha \leq K \mbox{ and } e \in \gE(u,v) \mbox{ that splits } L_\alpha\mbox{ into } A \mbox{ and }B\}\,,$$
        which also denotes set of edges between $u$ and $v$, that may be empty;
        \item 
        the edge length based matrix representation  $\psi$ from \Cref{eq:definition-phi-map-ell-wald-space} translates to the \emph{edge weight} based matrix representation $\phi$ defined by
        \begin{equation}\label{eq:definition-phi-map-lambda}
            \phi\colon\W\to\spd,\quad 
            F = (E,\lambda) \mapsto (\rho_{uv})_{u,v=1}^N 
            \coloneqq \bigg(\prod_{e\in E(u,v)} \big(1 - \lambda_e\big)\bigg)_{u,v=1}^{N},
        \end{equation}
        with the agreement %(we will refer to this agreement with a label) 
        that in case of empty $E(u,v)$
        \begin{align}\label{eq:phi-map-agreement}
            \left. 
            \begin{array}{rl}
                &\rho_{uv} \coloneqq 1 \text{ whenever } u = v \text{ and }\\
                &\rho_{uv} \coloneqq 0 \text{ whenever } u\in L_\alpha \text{ and } v\in L_\beta\text{, }\alpha\neq\beta\text{, }\alpha,\beta\in\{1,\dots,K\}
            \end{array}
            \right\}\,;
        \end{align}
        %    we set the r.h.s. such that $\rho_{uv} = 1$ whenever $u = v$ and $\rho_{uv} = 0$ whenever $u\neq v$ are not connected;  
        here  $\lambda$ is computed from $\ell$ as defined in \Cref{eq:lambda-ell}.
 %       \item $F_\infty \in \cW$ is the unique \wald with topology $E= \emptyset$, it is the completely disconnected forest with leaf partion $\{1\}, \ldots, \{N\}$.
    \end{enumerate}
\end{definition}

\begin{remark}\label{rmk:grove-rep}

        In light of \Cref{def:wald-space}, the \emph{\waldspace} is the topological space $\cW$ equipped with the unique topology such that the map $\phi\colon\cW\to\spd$ is a homeomorphism onto its image.  %According to \Cref{rmk:grove-rep}, 
        Thus, groves can be identified topologically with open unit cubes
                \begin{equation}\label{eq:grove-cube-identification}
            \G{E} \cong (0,1)^E\,
        \end{equation}
        and the \emph{\waldspace}  thus with the disjoint union 
    \begin{eqnarray}\label{eq:wald-cubes-identification}
    \W = \bigsqcup_{E \in \cE} \G{E} \cong \bigsqcup_{E \in \cE} (0,1)^E\,,
    \end{eqnarray}
    where we note that $\vert E\vert $ runs from $0$ (corresponding to $F_\infty$) to $2N-3$ (for fully resolved trees), as is easily seen upon induction on $N$.
        
%         \begin{remark}\label{rmk:grove-rep}
%     With $\cE$, the set of all $E=E_1\cup\ldots\cup E_K$ where $E_\alpha$ is a set of compatible splits of $L_\alpha$ ($1\leq \alpha \leq K$) satisfying \Cref{def:wald-split-representation} (vi)., where $L_1,\ldots,L_K$ runs over all partitions of $L$ and $1\leq K\leq N$, the space of split-based phylogenetic forests can be identified with the disjoint union
% \end{remark}

        Furthermore, observe that

\begin{enumerate}
 \item 
\Cref{eq:lambda-ell}  links strictly monotonous edge weights with edge lengths so that the limits $\lambda_e \to 0,1$ correspond to the limits $\ell_e \to 0,+\infty$, respectively;

\item for any partition $A,B$ of $L_\alpha$ ($1\leq \alpha\leq K$, as above), we have that %for every $u,v\in L$ with $\rho_{uv} >0$ (i.e. $u,v$ are connected),    
\begin{equation}\label{eq:splits-on-path-equiv}
    e = A\vert B\iff  e\in E(u,v) \mbox{ for all }u\in A, v\in B\,,
\end{equation}
where the implication to the right is a consequence of $E_\alpha$ being a tree topology and the reverse implication is a consequence of $A$ and $B$ being a partition of $L_\alpha$.
\end{enumerate}

\end{remark}

\begin{example}\label{exa:phylogenetic-forest-topology}
    Let $N = 6$.
    Consider
    \begin{equation*}
        E = \big\{1\vert 234, 3\vert 124, 4\vert 123, 12\vert 34, 5\vert 6\big\},
    \end{equation*}
    i.e.~the partition of labels is $L_1 = \{1,2,3,4\}$, $L_2 = \{5,6\}$,
    the corresponding graph is depicted in \Cref{fig:example-phylogenetic-forest-topology}.
    One can easily check that all edges that are splits of $L_1$ are compatible, likewise for all splits of $L_2$ (there is only one split, $5\vert 6$, in this case). 
    
    Moreover, observe that the unique path from $1$ to $4$ contains the edges $E(1, 4) = \big\{1\vert 234, 4\vert 123, 12\vert 34\big\}$, i.e. all splits separating $1$ and $4$.
    
    Indeed, for every connected pair of leaves, there is a split separating this pair, for instance for all $u,v\in L_1$ there is a split $e = A\vert B\in E$ such that $u\in A$ and $v\in B$. Removing the edge $1\vert 234$ from the subtree comprising the leaf set $L_1$ violates this condition:  If there is no split separating $1$ and $2$, which remain connected, then one vertex is labelled twice with $1$ and $2$.  \cite[e.g. Section 3.1.]{semple_phylogenetics_2003} allow such trees, we, however, exclude them.

\end{example}

\begin{figure}[ht]
    \centering
    \includegraphics[width=0.4\linewidth]{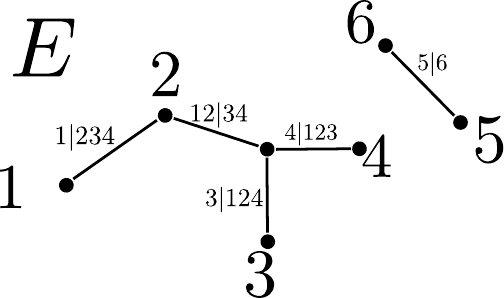}
    \caption{The topology $E$ as defined in \Cref{exa:phylogenetic-forest-topology} with the corresponding splits annotated to the edges.}
    \label{fig:example-phylogenetic-forest-topology}
\end{figure}

\section{Topology and Stratification of Wald Space}
%and its Stratification by Smooth Manifolds}
\label{scn:topology-of-waldspace}

\subsection{Embedding}\label{scn:inequalities}
Recall from \Cref{thm:phi-well-defined-injective-into-spd} that $\psi:\W \to \cP$ from \Cref{eq:definition-phi-map-ell} is injective and so is the equivalent $\phi:\W \to \cP$ from \Cref{eq:definition-phi-map-lambda}. Its image is characterized by algebraic equalities and inequalities, as shown by the the following theorem. 
Further exploration will yield that the topology of \waldspace is that of a stratified union of disjoint open unit cubes, each corresponding to a grove from  \Cref{def:split-based-advanced}.
%Afterwards we will show that the metric structure of \waldspace $(\W,d_\W)$ is compatible with this embedding, from which the stratification by groves foreshadowed in \Cref{def:split-based-advanced} will be derived.

\begin{theorem}\label{thm:phi-spd-repr}
    % The mapping $\phi$ is injective and maps into $\spd$. 
    A matrix $P=(\rho_{uv})_{u,v=1}^N\in\spd$ is the $\phi$-image of a \wald $F \in \F$  if and only if all of the following conditions are satisfied for arbitrary $u,v,s,t\in L$: 
    \begin{enumerate}[leftmargin=0.85cm, start=1, label={\textnormal{(R\arabic*)}}]\itemsep0.3em
      \item \label{item:diagonal-is-1}
      $\rho_{uu} = 1$,
      \item \label{item:4pt-condition} two of the following three are equal and smaller than (or equal to) the third
      \begin{equation*}
        \rho_{uv}\rho_{st},\quad \rho_{us}\rho_{vt},\quad \rho_{ut}\rho_{vs}\,,
      \end{equation*}
      \item \label{item:entries-geq-0}
      $\rho_{uv} \geq 0$.
    \end{enumerate}
    Furthermore, the \wald $F \in \F$ is then uniquely determined.
\end{theorem}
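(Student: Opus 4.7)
The plan is to prove the two implications of the biconditional separately and then invoke \Cref{thm:phi-well-defined-injective-into-spd} for uniqueness. For the ``only if'' direction I would fix $F=(E,\lambda)\in\W$ and $P=\phi(F)=(\rho_{uv})$. Conditions \ref{item:diagonal-is-1} and \ref{item:entries-geq-0} fall out of \eqref{eq:definition-phi-map-lambda} and \eqref{eq:phi-map-agreement} since each factor $1-\lambda_e$ lies in $(0,1)$. To verify \ref{item:4pt-condition} I would fix four leaves $u,v,s,t$ and split into cases. If they do not all lie in the same component $L_\alpha$ of $F$, then enough of the entries $\rho_{uv},\rho_{us},\dots$ vanish that at least two of the three products in \ref{item:4pt-condition} equal $0$ while the third is non-negative, so the condition is trivial. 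Otherwise, the subtree of $F$ spanned by $\{u,v,s,t\}$ has a unique quartet structure; assuming without loss of generality that it is $uv|st$ and writing $M\subseteq E$ for the (possibly empty) set of ``middle'' splits separating $\{u,v\}$ from $\{s,t\}$, bookkeeping based on \eqref{eq:splits-on-path-equiv} and \eqref{eq:definition-phi-map-lambda} yields
\begin{equation*}
    \rho_{us}\rho_{vt} \;=\; \rho_{ut}\rho_{vs} \;=\; \rho_{uv}\rho_{st}\cdot\prod_{e\in M}(1-\lambda_e)^2 \;\leq\; \rho_{uv}\rho_{st},
\end{equation*}
which is \ref{item:4pt-condition}, with all three products coinciding iff $M=\emptyset$.

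For the ``if'' direction, suppose $P\in\cP$ satisfies \ref{item:diagonal-is-1}--\ref{item:entries-geq-0}. Inspecting the $2\times2$ principal minor $1-\rho_{uv}^2$ and using $P\in\cP$, I obtain $\rho_{uv}\in[0,1)$ whenever $u\neq v$. Specialising \ref{item:4pt-condition} to $(u,v,s,t)=(u,v,s,v)$ gives $\rho_{us}\geq\rho_{uv}\rho_{vs}$, so the relation ``$\rho_{uv}>0$'' is transitive on $L$; call its equivalence classes $L_1,\dots,L_K$. On each $L_\alpha$ define a dissimilarity $d_{uv}:=-\log\rho_{uv}\in(0,\infty)$ for $u\neq v$ and $d_{uu}:=0$. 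Taking logarithms in \ref{item:4pt-condition} translates it verbatim into Buneman's additive four-point condition: among $d_{uv}+d_{st},d_{us}+d_{vt},d_{ut}+d_{vs}$, two are equal and no less than the third. The classical characterisation of tree metrics (cf.\ \cite{semple_phylogenetics_2003}) then produces a unique edge-weighted $X$-tree $(\gV_\alpha,\gE_\alpha,\ell^\alpha)$ on $L_\alpha$ realising $d$, with strictly positive edge lengths and all interior vertices of degree at least three. Assembling the disjoint union of these trees and converting $\ell^\alpha$ into edge weights via \eqref{eq:lambda-ell}, \Cref{thm:definitions-of-phylo-forest-equivalent} yields a \wald $F\in\W$ with $\phi(F)=P$; the uniqueness of $F$ then follows from the injectivity in \Cref{thm:phi-well-defined-injective-into-spd}.

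The main obstacle, aside from the routine bookkeeping in the ``only if'' direction, is to ensure that the tree reconstructed from $d$ on each component $L_\alpha$ is genuinely a valid component of a \wald in the sense of \Cref{def:wald-split-representation}: strictly positive edge lengths, and only interior vertices of degree $\geq3$. The latter is standard (suppress degree-$2$ vertices), but the former is precisely where positive definiteness of $P$ enters the argument. Without the strict bound $\rho_{uv}<1$ for distinct $u,v\in L_\alpha$, which uses $P\in\cP$ and is not implied by mere positive semidefiniteness, the four-point reconstruction could return an interior edge of length $0$, and this would not translate into an admissible edge weight $\lambda_e\in(0,1)$; positive definiteness of $P$ rules this out.
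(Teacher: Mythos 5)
Your argument follows essentially the same route as the paper: split $L$ into the connectivity classes induced by the support of $P$, pass to the log-transformed dissimilarities, invoke the Buneman/Semple--Steel tree-metric theorem on each class, and use positive definiteness to exclude degeneracies. Two points of comparison and one caution are worth recording.

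First, your ``only if'' direction is carried out by an explicit quartet computation
\begin{equation*}
  \rho_{us}\rho_{vt}=\rho_{ut}\rho_{vs}=\rho_{uv}\rho_{st}\prod_{e\in M}(1-\lambda_e)^2,
\end{equation*}
whereas the paper simply cites \citet[Theorem~7.2.6]{semple_phylogenetics_2003} for that direction as well. Your version is more elementary and self-contained, and it gives the reader a concrete picture of why the four-point condition holds and when equality occurs ($M=\emptyset$). It does, however, presuppose that $u,v,s,t$ are pairwise distinct; the cases with coincidences (handled automatically by the cited theorem) need a short separate check, e.g.~$u=v$ reduces to \ref{item:triangle}, which you have.

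Second, and this is the only real inaccuracy, in the ``if'' direction you describe the danger ruled out by $P\in\cP$ as ``the four-point reconstruction could return an interior edge of length $0$''. That is not the issue: the Buneman/Semple--Steel reconstruction never emits zero-length interior edges (they are contracted, and degree-2 interior vertices suppressed). The degeneracy that $P\in\cP$ excludes is two \emph{distinct leaves} $u\neq v$ being placed at the same vertex, i.e.\ $d_{uv}=0$ equivalently $\rho_{uv}=1$, which \cite{semple_phylogenetics_2003} allow in their notion of an $X$-tree but which is forbidden by \ref{item:pf1}. You do correctly identify that the strict bound $\rho_{uv}<1$ is what positive definiteness buys, so the mathematics of the step is sound; only the stated reason should be corrected to match what the reconstruction can actually produce. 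The paper phrases this as the exclusion of multiply-labelled vertices via condition \ref{item:R5} of \Cref{rmk:phi-spd-repr}, which is the cleaner formulation.
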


Before proving \Cref{thm:phi-spd-repr}, we elaborate on the above algebraic conditions.

\begin{remark}\label{rmk:phi-spd-repr}
    \begin{enumerate}
    \item 
    Condition \ref{item:4pt-condition} above is called the \emph{four-point-condition}. 
    In its \emph{non-strict} version, all three products are equal and this indicates some degeneracy, namely that some internal vertices have degree four or higher.
    The four-point-condition is equivalent to (e.g.~\cite{buneman_note_1974} or \cite[p.147]{semple_phylogenetics_2003})
    \begin{equation}\label{eq:four-point-condition-equiv-inequality}
        \rho_{uv}\rho_{st} \geq \min\big\{\rho_{us}\rho_{vt},\;\rho_{ut}\rho_{vs}\big\}
    \end{equation}
    and implies (e.g.~setting $s = t$ in \ref{item:4pt-condition} and exploiting \ref{item:diagonal-is-1}) 
    \begin{enumerate}[leftmargin=0.85cm, start=4, label={\textnormal{(R\arabic*)}}]\itemsep0.35em
      \item \label{item:triangle}
      $\rho_{uv} \geq \rho_{us}\rho_{sv}$ for all $u,v,s\in L$.
    \end{enumerate}  
    Notably \ref{item:diagonal-is-1} and \ref{item:4pt-condition} imply, in conjunction with $P\in \spd$ that
    \begin{enumerate}[leftmargin=0.85cm, start=5, label={\textnormal{(R\arabic*)}}]\itemsep0.35em
      \item \label{item:R5}
      $\rho_{uv} < 1$ for all $u\neq v$,
    \end{enumerate}
    for otherwise, if $\rho_{uv} = 1$ for some $u\neq v$, Condition \ref{item:triangle} implied for any $s\in L$ that
    $$\rho_{us}\geq \rho_{uv}\rho_{vs} = \rho_{vs} \quad\text{ and }\quad \rho_{vs}\geq \rho_{uv}\rho_{us} = \rho_{us},$$
    so $\rho_{us}=\rho_{vs}$ and hence, $P$ would be singular, a contradiction to $P \in \spd$.
    \item
    Observe that $\phi(F) = (\exp(-d_{uv}))_{u,v=1}^N$, where the $d_{uv}$ are the finite or infinite distances
    \begin{equation*}
        d_{uv} \coloneqq  \sum_{e \in E(u,v)} \ell_e = - \log \rho_{uv},
    \end{equation*}
    between leaves $u,v\in L$, and, with \Cref{def:split-based-advanced} 5., this translates to $d_{uu} = 0$ and $d_{uv} = \infty$ whenever $u$ and $v$ are in different components.
    In the literature, $(d_{uv})_{u,v=1}^N$ is also called \emph{tree metric} (e.g.~\cite[Chapter~7]{semple_phylogenetics_2003}) or distance matrix (e.g.~\cite[Chapter~11]{felsenstein_inferring_2003}).
    Indeed, it conveys a metric on $L$ as Condition \ref{item:triangle} encodes the \emph{triangle inequality} (for any $u,v,s\in L$)
    \begin{equation*}
        d_{uv} \leq d_{us} + d_{vs}.
    \end{equation*}
    \item In particular, the unit $N\times N$ matrix $I = (\delta_{uv})_{u,v\in L} \in \cP$ is the $\phi$-image of the complete disconnected \wald $F_\infty\in \W$ with topology $E_\infty = \emptyset$ in which each leaf comprises one of the $K=N$ single element trees.
    \item For a given $P\in \cP$ satisfying conditions \ref{item:diagonal-is-1}, \ref{item:4pt-condition} and \ref{item:entries-geq-0} there are neighbour joining algorithms in \cite[Scn~7.3]{semple_phylogenetics_2003}, determining its split  $E\in \cE$.
    \end{enumerate}
\end{remark}

\begin{proof}[Proof of \Cref{thm:phi-spd-repr}.]
    ``$\Longrightarrow$''. 
    Let $F\in\F$ and $(\rho_{uv})_{u,v=1}^N = \phi(F)$. \ref{item:diagonal-is-1} and \ref{item:entries-geq-0} hold by definition. Further, applying \citet[Theorem~7.2.6]{semple_phylogenetics_2003} to each connected component asserts  \ref{item:4pt-condition} for all $u,v,s,t\in L_\alpha$ for all $\alpha=1,\dots,K$.
    Furthermore, since $\rho_{uv}=0$ whenever $u\neq v$ are in different components, for any $s\in L\setminus\{u,v\}$, $\rho_{us} = 0$ or $\rho_{vs} = 0$, so \ref{item:4pt-condition} holds true in general.

    ``$\Longleftarrow$''. 
    Let $P = (\rho_{uv})_{u,v=1}^N\in\spd$ satisfy \ref{item:diagonal-is-1}, \ref{item:4pt-condition} and \ref{item:entries-geq-0} (and thus by \Cref{rmk:phi-spd-repr} also \ref{item:triangle} and \ref{item:R5}).
    The equivalence relation on $L$, defined by $u\sim v \iff \rho_{uv}\neq 0$ partitions $L$ into $L_1,\dots,L_K$ for some $K\in\{1,\dots,N\}$.
    For each $\alpha=1,\dots,K$, apply \citet[Theorem~7.2.6]{semple_phylogenetics_2003} to each tree metric $(d_{uv})_{u,v\in L_\alpha}$ (defined in \Cref{rmk:phi-spd-repr} 2.) to obtain a unique corresponding tree, say $[\gV_\alpha, \gE_\alpha, \ell^{(\alpha)}]$, where, in contrast to our definition, \cite{semple_phylogenetics_2003} allow leaves on top of each other, in their language, vertices \emph{labelled} more than once.
%    with labelling function $\psi\colon L_\alpha\to\gV_\alpha$ (i.e., a vertex is possibly be labelled more than once).
    The union of trees gives a forest $\gF = [\gV, \gE, \ell]$ with label set $L$ with $\gV = \bigcup_{\alpha} \gV_\alpha$, $\gE = \bigcup_{\alpha} \gE_\alpha$, satisfying $\psi(\gF) = P$.
    Suppose now a vertex was labelled more than once, say, with distinct leaf labels $u,v\in L_\alpha$, i.e. $u\neq v$, for some $\alpha=1,\dots,K$. Then, $u$ and $v$ have zero distance $d_{uv}$, hence $\rho_{uv}=1$, yielding a contradiction to \ref{item:R5} (i.e. $P \not \in \cP$ as argued in \Cref{rmk:phi-spd-repr} 1.)
    Thus $\gF\in\F$ and with \Cref{def:split-based-advanced}, we obtain $F\in\F$ with $\phi(F) = \psi(\gF)=P$.
\end{proof}

Since $\phi(\F)$ is defined by algebraic equalities and nonstrict inequalities, we have the following corrollary at once. 

\begin{corollary}\label{cor:forests-closed-in-spd}
    $\phi(\F)\subseteq\spd$ is a closed subset of $\spd$.
\end{corollary}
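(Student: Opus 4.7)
The plan is to deduce the corollary directly from \Cref{thm:phi-spd-repr} by showing that each of the three defining conditions \ref{item:diagonal-is-1}, \ref{item:4pt-condition}, \ref{item:entries-geq-0} cuts out a closed subset of $\cP$, and that $\phi(\F)$ is the intersection of finitely many such sets, hence closed.

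First, I would fix the observation that for each pair $(u,v)\in L\times L$ the map $\cP\to\RR,\ P=(\rho_{u'v'})\mapsto \rho_{uv}$, is continuous. Consequently, for every $u\in L$ the set $\{P\in\cP:\rho_{uu}=1\}$ is closed (preimage of a point under a continuous map), and for every $u,v\in L$ the set $\{P\in\cP:\rho_{uv}\geq 0\}$ is closed (preimage of $[0,\infty)$). This immediately handles \ref{item:diagonal-is-1} and \ref{item:entries-geq-0}.

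The four-point condition \ref{item:4pt-condition} requires a little more care but is still routine. For a fixed 4-tuple $u,v,s,t\in L$, set
\[
a(P) = \rho_{uv}\rho_{st},\qquad b(P) = \rho_{us}\rho_{vt},\qquad c(P) = \rho_{ut}\rho_{vs}.
\]
Condition \ref{item:4pt-condition} says that two of these three continuous functions agree and are $\leq$ the third. Thus
\[
\{P\in\cP:\ref{item:4pt-condition}\text{ holds for }u,v,s,t\} = \{a=b\leq c\}\cup\{a=c\leq b\}\cup\{b=c\leq a\},
\]
where each of the three sets on the right is the intersection of an equality and a weak inequality of continuous functions and therefore closed. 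A finite union of closed sets is closed, so the entire 4-tuple condition defines a closed subset of $\cP$.

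Finally, since the label set $L$ is finite, there are only finitely many choices of $(u,v,s,t)$ and of $u$, so \Cref{thm:phi-spd-repr} expresses $\phi(\F)$ as a finite intersection of closed subsets of $\cP$, which is closed. There is no real obstacle here; the whole point of \Cref{thm:phi-spd-repr} is to furnish precisely such an algebraic description, and the corollary is its immediate topological consequence.
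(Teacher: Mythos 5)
Your proof is correct and takes the same approach as the paper, which dispatches the corollary in one line by noting that $\phi(\F)$ is carved out of $\cP$ by algebraic equalities and non-strict inequalities (via \Cref{thm:phi-spd-repr}) and hence is closed. You supply the details the paper omits, in particular writing the four-point condition~\ref{item:4pt-condition} as a finite union of closed sets $\{a=b\leq c\}\cup\{a=c\leq b\}\cup\{b=c\leq a\}$, which is exactly the right way to see that this piece is also closed.
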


\begin{example}[$\F$ for $N = 3$]
\label{exa:forests-n3-in-spd}
For $N = 3$, all matrices $P = \phi(F)$ with $F\in\F$ are given by (using \Cref{thm:phi-spd-repr})
\begin{equation*}
    \begin{pmatrix}
        1 & \rho_{12} & \rho_{13}\\
        \rho_{12} & 1 & \rho_{23}\\
        \rho_{13} & \rho_{23} & 1
    \end{pmatrix}
    \quad\text{satisfying the triangle inequalities}\quad \left\{
    \begin{array}{c}
    \rho_{12}\geq\rho_{13}\rho_{23},\\
    \rho_{13}\geq\rho_{12}\rho_{23},\\
    \rho_{23}\geq\rho_{12}\rho_{13},
    \end{array}\right.
\end{equation*}
and $0 \leq \rho_{12},\rho_{13},\rho_{23} < 1$.
This set in coordinates $\rho_{12},\rho_{13},\rho_{23} $ is depicted in \Cref{fig:forests-n3-tetrahedron}, where the two-dimensional surfaces correspond to the non-linear boundaries resulting from the triangle inequalities.
Note that the regions, where at least one coordinate is one, are not included in $\phi(\F)$, as the corresponding matrix is no longer strictly positive definite.
\end{example}

\begin{figure}[ht]% original file names: cone_front, cone_side, cone_back
  \centering
    \begin{minipage}{0.99\linewidth}
        \includegraphics[width=\linewidth]{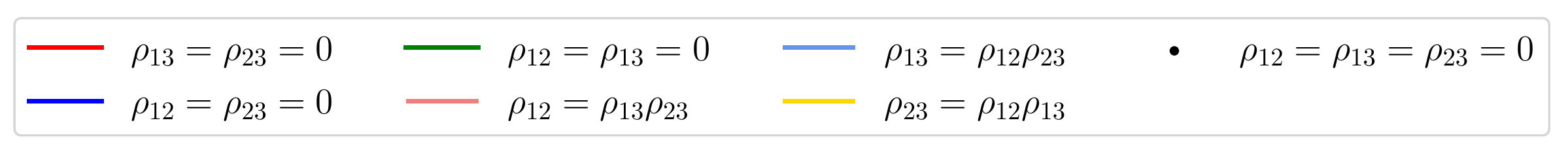}
    \end{minipage}
    \begin{minipage}{0.99\linewidth}
      \centering
      \begin{minipage}{0.32\linewidth}
        \includegraphics[width=\linewidth]{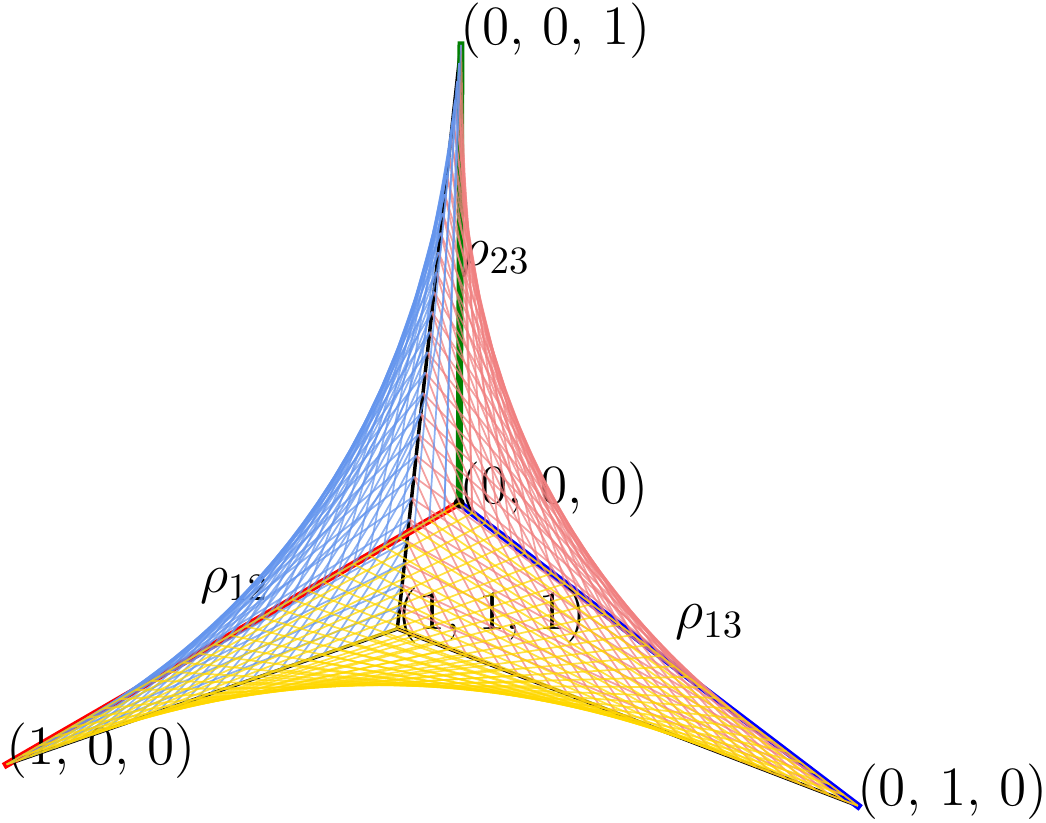}
      \end{minipage}
      \begin{minipage}{0.32\linewidth}
        \includegraphics[width=0.85\linewidth]{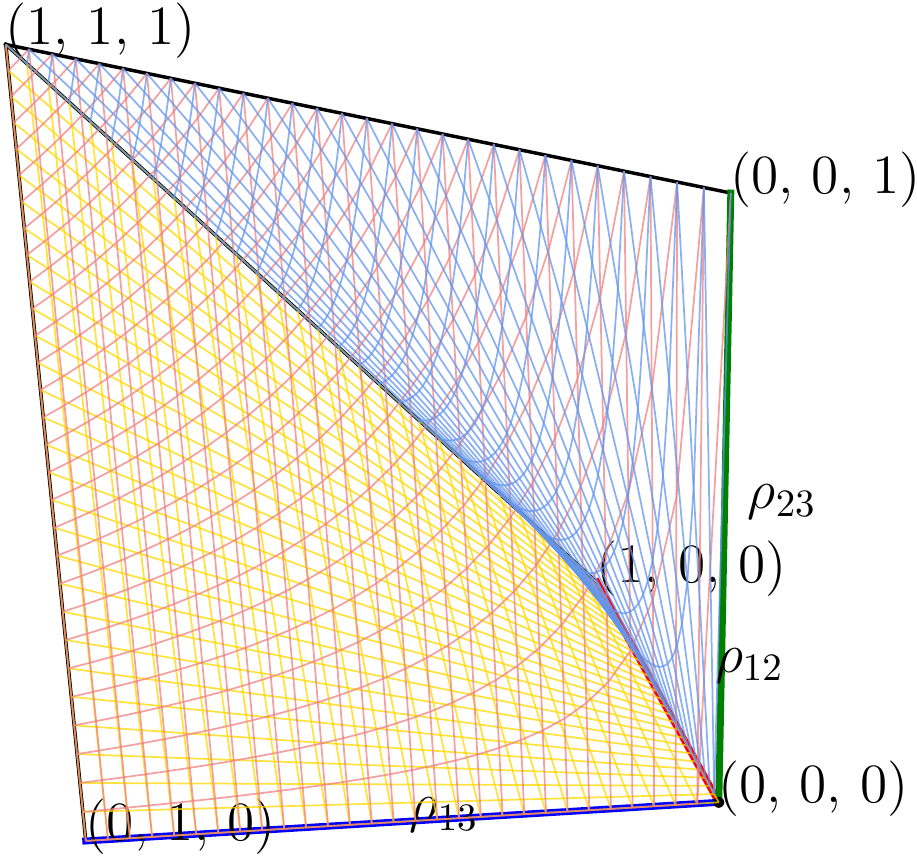}
      \end{minipage}
      \begin{minipage}{0.32\linewidth}
        \includegraphics[width=\linewidth]{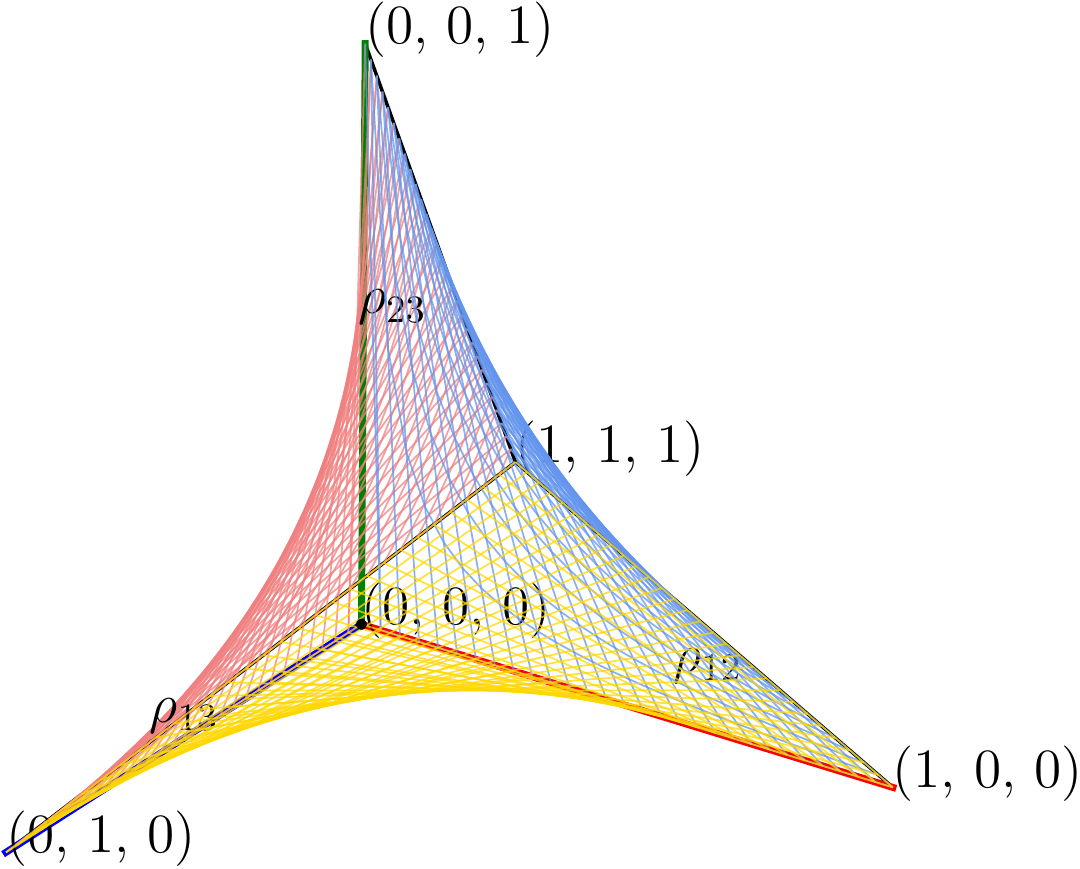}
      \end{minipage}
    \end{minipage}
  \caption{
  $\phi(\F)$ for $N = 3$ embedded in $\spd$, where only the off-diagonal entries on the boundary are depicted.
  Note that the geometry of the \waldspace is not Euclidean and thus this depiction may be deceiving (as it is a non-isometric embedding into $\RR^3$), e.g.~the regions where one coordinate equals 1 are infinitely far away.}
  \label{fig:forests-n3-tetrahedron}
\end{figure}

\begin{corollary}\label{cor:forests-are-contractible}
    Conveyed by the homeomorphism $\phi$, $\W$ is star shaped as a subset $\RR^{N\times N}$ with respect to $F_\infty$ and hence contractible.
\end{corollary}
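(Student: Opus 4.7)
The plan is to prove the stronger star-shapedness claim directly and deduce contractibility as a standard consequence. Concretely, identifying $\W$ with $\phi(\W)\subseteq \cP\subseteq \RR^{N\times N}$ via the homeomorphism $\phi$, and noting that $\phi(F_\infty) = I$ (the identity matrix, as observed in \Cref{rmk:phi-spd-repr}~3.), I would fix an arbitrary $P=(\rho_{uv})\in\phi(\W)$, define the straight-line interpolant
$$P_t \coloneqq (1-t)P + tI, \qquad t\in[0,1],$$
and show $P_t \in \phi(\W)$ for every $t$. Using \Cref{thm:phi-spd-repr}, this amounts to verifying $P_t\in\cP$ and conditions \ref{item:diagonal-is-1}, \ref{item:4pt-condition}, \ref{item:entries-geq-0} for $P_t$. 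Membership in $\cP$ is immediate from convexity of the positive definite cone; \ref{item:diagonal-is-1} follows because both $P$ and $I$ have unit diagonal, so $(P_t)_{uu}=1$; and \ref{item:entries-geq-0} is immediate since $(P_t)_{uv}=(1-t)\rho_{uv}\ge 0$ for $u\ne v$.

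The substantive verification is the four-point condition \ref{item:4pt-condition}, which I would handle by splitting into cases according to how many of $u,v,s,t$ coincide. When all four are distinct, the three relevant products for $P_t$ are exactly those for $P$ rescaled by the common factor $(1-t)^2\ge 0$, so the ``two equal and $\le$ the third'' structure is preserved verbatim. When indices coincide the condition reduces, via \ref{item:diagonal-is-1}, to the triangle-type inequality \ref{item:triangle}, so it suffices to check that \ref{item:triangle} is inherited by $P_t$: for distinct $u,v,s$ one needs $(1-t)\rho_{uv}\ge (1-t)^2\rho_{us}\rho_{sv}$, which follows from $\rho_{uv}\ge \rho_{us}\rho_{sv}$ after dividing by $1-t$ (and is trivial at $t=1$); coincidences among $u,v,s$ are handled by \ref{item:diagonal-is-1} and \ref{item:R5}. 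This exhausts the cases and shows $P_t\in\phi(\W)$, establishing star-shapedness of $\phi(\W)$ with respect to $I$.

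Contractibility is then the standard consequence: the straight-line homotopy
$$H\colon \phi(\W)\times[0,1]\to\phi(\W),\qquad H(P,t) = (1-t)P + tI,$$
is continuous, satisfies $H(\cdot,0)=\mathrm{id}$ and $H(\cdot,1)\equiv I$, and transports back to $\W$ through the homeomorphism $\phi$ to give a deformation retraction of $\W$ onto $\{F_\infty\}$. I anticipate the only place requiring genuine care to be the bookkeeping for coincident indices in \ref{item:4pt-condition}; everything else is essentially convexity of $\cP$ together with the monotone rescaling $\rho\mapsto (1-t)\rho$ off the diagonal.
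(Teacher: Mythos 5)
Your proposal is correct and takes essentially the same approach as the paper: both interpolate linearly between $\phi(F)$ and $\phi(F_\infty)=I$, verify (R1) and (R3) immediately, and check the four-point condition (R2) by a case analysis on coincident indices. The only stylistic difference is that you funnel the degenerate cases through the derived triangle inequality (R4) and (R5), whereas the paper argues each coincidence pattern on the four-point condition directly; these amount to the same bookkeeping.
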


\begin{proof} Let $F\in \F$ with $\phi(F) = P = (\rho_{uv})_{u,v\in L}$ satisfying \ref{item:diagonal-is-1} - \ref{item:entries-geq-0} by Theorem \ref{thm:phi-spd-repr}. 
Recalling from \Cref{rmk:phi-spd-repr}, 3., that $\phi(F_\infty)=I$, consider
$$ (\rho_{uv}^{(x)})_{u,v\in L} =  P^{(x)} = x\,I + (1-x) P,$$
and observe that for all $x\in [0,1]$, $P^{(x)} \in \spd$, $\rho^{(x)}_{uu} = 1 = \rho_{uu}$ for all $u\in L$ and $\rho^{(x)}_{uv} = (1-x) \rho_{uv} \geq 0$ for all $u,v\in L$ with $u\neq v$, i.e. $P^{(x)}$ satisfies \ref{item:diagonal-is-1} and \ref{item:entries-geq-0} for all $x\in[0,1]$. 
Moreover,  to see that $P^{(x)}$ satisfies \Cref{eq:four-point-condition-equiv-inequality} for all $x\in (0,1)$ for all $u,v,s,t\in L$, assume w.l.o.g that
\begin{align}\label{eqn:4pt-condition-star-shaped-1} \rho_{uv}\rho_{st} &= \rho_{us}\rho_{vt}\leq \rho_{ut}\rho_{vs}\,.
\end{align}
If all four  $u,v,s,t$ are pairwise distinct then
\begin{align}\label{eqn:4pt-condition-star-shaped-2} \rho^{(x)}_{uv}\rho^{(x)}_{st} &= \rho^{(x)}_{us}\rho^{(x)}_{vt}\leq \rho^{(x)}_{ut}\rho^{(x)}_{vs}\,,
\end{align}
as well. If only one pair is equal, there are two typical cases. If $u=v$, say, we obtain a different but valid four point condition
$$ \rho^{(x)}_{uv}\rho^{(x)}_{st} ~\geq~ \rho^{(x)}_{us}\rho^{(x)}_{vt} ~=~ \rho^{(x)}_{ut}\rho^{(x)}_{vs}\,,$$
where the inequality is strict in case of $\rho_{st} >0$ due to $1 - x > (1 - x)^2$. If $u=t$, say, then we obtain \Cref{eqn:4pt-condition-star-shaped-2} where the inequality is strict if $\rho_{vs}>0$. If exactly two pairs are the same, then, with the above setup only $u=t$ and $v=s$ is possible and both \Cref{eqn:4pt-condition-star-shaped-1}  and \Cref{eqn:4pt-condition-star-shaped-2} are  strict. In case of three equal indices, one different, or the same, \Cref{eqn:4pt-condition-star-shaped-2} holds again.
%verify that $P^{(x)}$ satisfies \Cref{eq:four-point-condition-equiv-inequality} for all $x\in[0,1]$ for all $u,v,s,t\in L$ (distinguishing between cases where some of $u,v,s,t$ are equal, and using $1 - x \geq (1 - x)^2$).
Therefore, $P^{(x)}$ satisfies \ref{item:4pt-condition} for all $x\in[0,1]$, and by Theorem \ref{thm:phi-spd-repr} the entire continuous path $x\mapsto P^{(x)}, [0,1] \to \spd$ corresponds to a path $F^{(x)}\coloneqq  \phi^{-1}(P^{(x)}) \in \F$, connecting $F= F^{(0)}$ with $F_\infty = F^{(1)}$ as asserted.
\end{proof}

Showing contractibility of the edge-product space, Moulton and Steel contract to the same forest (cf. \cite[Proposition~5.1]{moulton_peeling_2004}), employing a different proof, however.

\begin{remark}\label{rmk:4pt}
We make the following observations about the proof of \Cref{cor:forests-are-contractible}.
\begin{enumerate}
        \item 
    All of the \walds $\phi^{-1}(P^{(x)})$, for $0 \leq x < 1$  in the proof share the same partition of leaves into connected tree  components, due to $\rho_{uv}\neq0\iff(1-x)\rho_{uv}\neq0$ for all $x\in[0,1)$ for all $u,v\in L$.
   %\Comment{don't we stay in the same grove? link to next remark?}
   
    \item 
    For $0<x<1$,  $P^{(x)}$ satisfies unchanged, strict or nonstrict four-point conditions \ref{item:4pt-condition}, that may be different, though, from those of $P^{(0)} = \phi(F)$. 
    \item All triangle inequalites \ref{item:triangle} involving initial nonzero $\rho_{uv}$ are strict, however, for $0< x < 1$, so that for $\phi^{-1}(P^{(x)})$ none of the leaves have degree 2.
    For example, starting with the \wald consisting of a chain of three vertices with $N = 3$ (so each vertex is labelled and the middle is of degree two), it is immediately transformed into a fully resolved tree (and stays one for all $x\in(0,1)$).
    \item 
    The point $F_\infty$ can be viewed as a vantage point of $\W$ which is then a bounded part of a cone where every 
    \begin{equation*}
    B_a = \Big\{F\in\W \;\Big\vert\; \phi(F) = (\rho_{uv})_{u,v=1}^N,~a = 1 - \prod_{\substack{u,v=1\\u<v}}^{N} \big(1 - \rho_{uv}\big)\Big\}.
    \end{equation*} 
    is a \emph{slice} of \emph{level} $a\in[0,1)$. Then for every $F\in B_a $, there is $r_F >1$ such that
    $$ F = \phi^{-1}\left((1-x) \phi(F_\infty) +x \phi(F) \right) \in \W$$
    for all $0\leq x < r_F$ and $\phi(F)$ is singular for $x=r_F$.
        For $N = 3$, the set $B_a$ for several $a\in(0,1]$ embedded into $\cP$ is depicted in Fig.~\ref{fig:cone-waldspace-slices}. 

\end{enumerate}
\end{remark}
  
\begin{figure}[ht]% original names where 10, 65, 95.
    \centering
    \begin{minipage}{0.9\linewidth}
        \includegraphics[width=\linewidth]{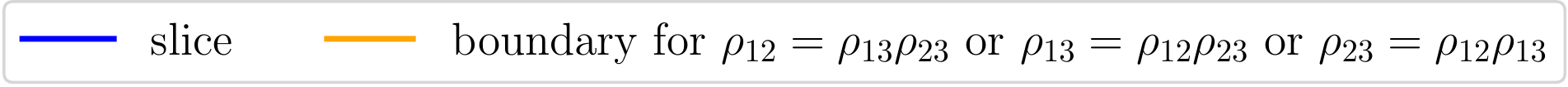}
        \vspace{0.1cm}
    \end{minipage}
    \begin{minipage}{0.99\linewidth}
      \centering
      \begin{minipage}{0.32\linewidth}
        \includegraphics[width=\linewidth]{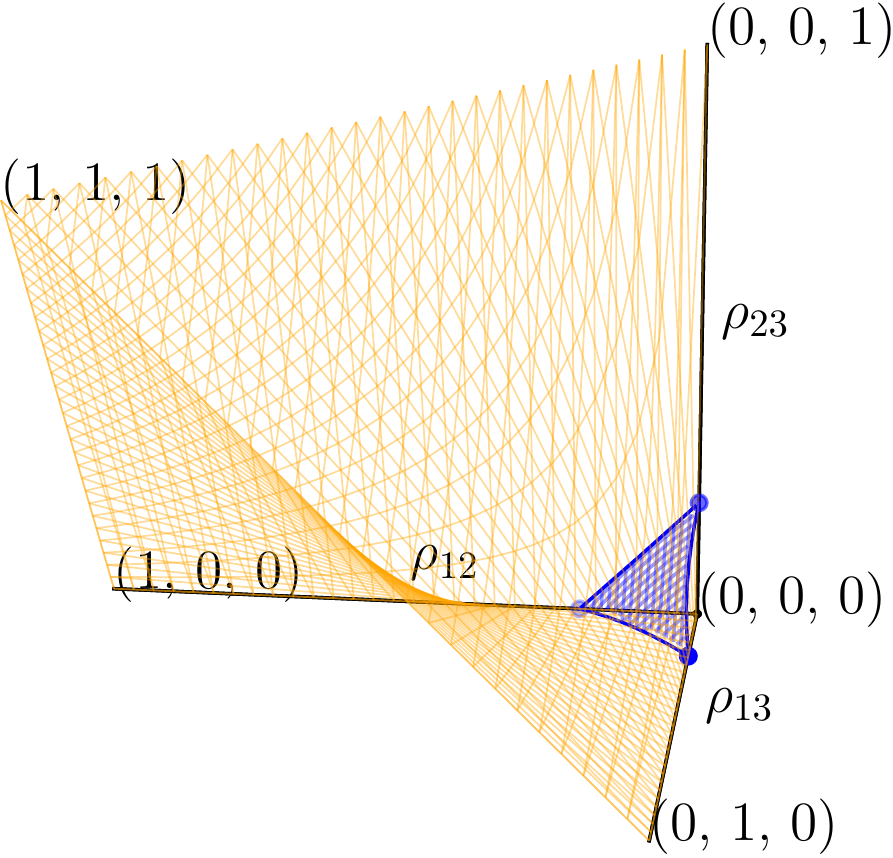}
      \end{minipage}
      \begin{minipage}{0.32\linewidth}
        \includegraphics[width=\linewidth]{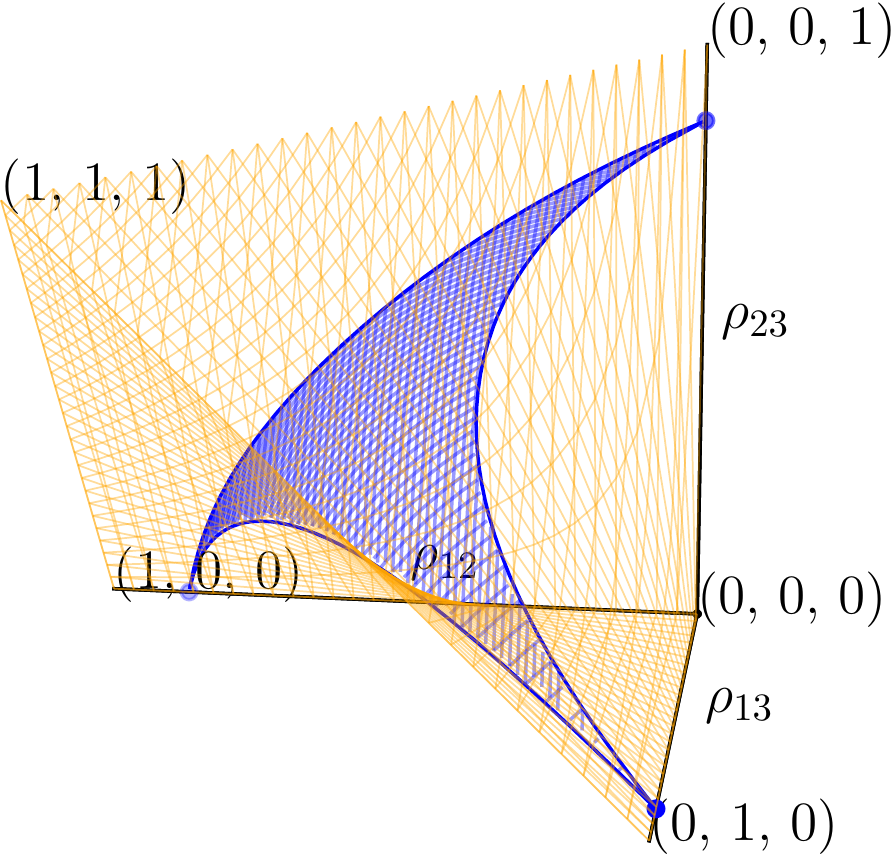}
      \end{minipage}
      \begin{minipage}{0.32\linewidth}
        \includegraphics[width=\linewidth]{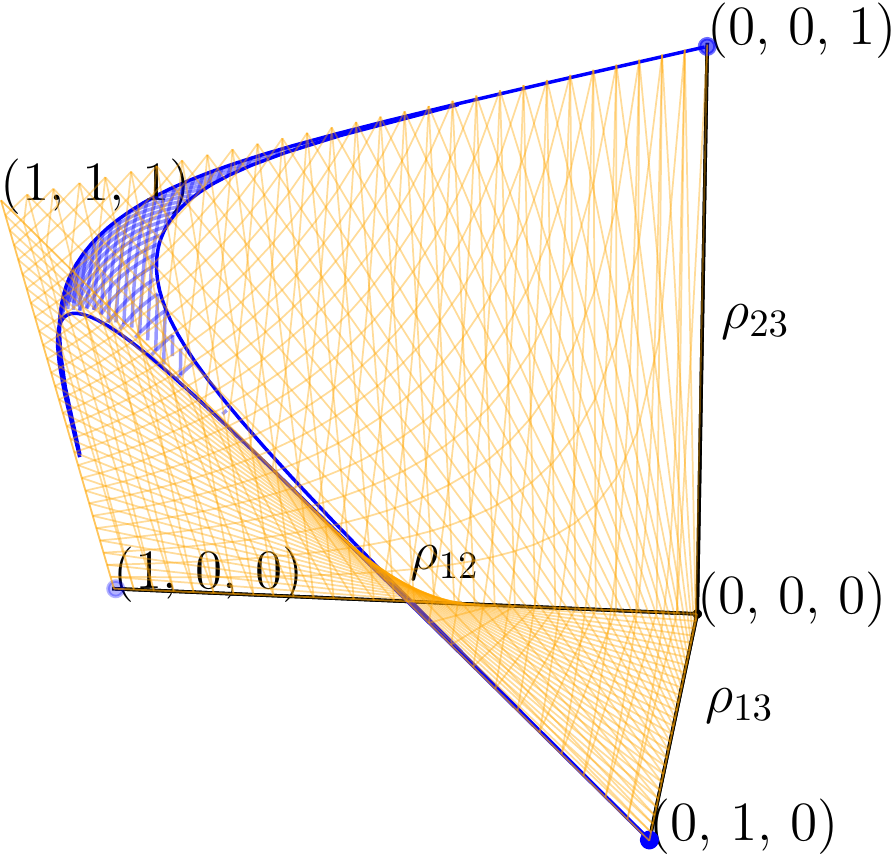}
      \end{minipage}
    \end{minipage}
    \caption{Depicting the off-diagonal matrix entries of
      $\phi(\F)$ embedded in $\spd$ for $N = 3$ (orange boundary) in a 3-dimensional coordinate system (cf.~Example~\ref{exa:forests-n3-in-spd}) and the 2-dimensional images $\phi(B_a)$ (purple) of the slices $B_a$ for $a=0.2, 0.87, 0.997$ (from left to right).
    }
    \label{fig:cone-waldspace-slices}
\end{figure}

We next consider the restriction of the map $\phi$ to each grove $\G{E}$ explicitly in terms of edge weights. 

\begin{definition}\label{rmk:grove-cube-identification}
        With the agreement (\ref{eq:phi-map-agreement}) in case of empty $E(u,v)$, we denote the restriction of $\phi\colon\F\to\spd$ from  \Cref{def:split-based-advanced} to a grove $\G{E}$ by 
        \begin{equation}\label{eq:phi-map-restricted-to-grove}
            \phi_E\colon(0,1)^E\to\spd,\qquad \lambda \mapsto (\rho_{uv})_{u,v\in  L} = \Bigg(\prod_{e\in E(u,v)} \big(1 - \lambda_e\big)\Bigg)_{u,v=1}^{N};
        \end{equation}
%         \item 
          its continuation onto all of $\RR^E$ is denoted by
    \begin{equation}\label{eq:phi-map-extended-analytic}
        \bar\phi_E\colon\RR^E\to\sym,\qquad \lambda \mapsto (\rho_{uv})_{u,v\in  L} = \Bigg(\prod_{e\in E(u,v)} \big(1 - \lambda_e\big)\Bigg)_{u,v=1}^{N},
    \end{equation}
    \end{definition}

    \begin{remark}\label{rmk:phi-analytic-cont}
         The continuation $\bar\phi_E$ is multivariate real analytic on all of $\RR^E$.
     \end{remark}

The following theorem characterizes each grove.
\begin{theorem}\label{lem:grove-topology-eucl-phi-is-embedding}
    \begin{enumerate}
    \item For $F=(E,\lambda) \in \cW$ with $\phi(F) = (\rho_{uv})_{u,v\in L}$ we have
    $$ \lambda_e = 1 - \max_{\substack{u,v\in A\\s,t\in B}} \sqrt{\frac{\rho_{ut}\rho_{vs}}{\rho_{uv}\rho_{st}}}\,,\mbox{ for all } e=A\vert B \in E\,.$$
    \item The derivative of $\phi_E$ has full rank $\vert E\vert$ throughout $(0,1)^E$. 
    \item The map $\phi_E\colon(0,1)^E\cong\G{E}\to\spd$ is a smooth embedding.
\end{enumerate}
\end{theorem}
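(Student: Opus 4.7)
For part (1), the plan is to pass to the tree metric $d_{uv} = -\log \rho_{uv}$ from \Cref{rmk:phi-spd-repr}, which is finite exactly when $u,v$ lie in a common component. Fix an edge $e = A\vert B \in E$ with endpoints $e_A$ on the $A$ side and $e_B$ on the $B$ side. For arbitrary $u,v \in A$ and $s,t \in B$, the unique paths $u$-$t$ and $v$-$s$ both traverse $e$, while $u$-$v$ and $s$-$t$ stay within their respective subtrees. Summing edge lengths yields
\begin{equation*}
d_{ut} + d_{vs} - d_{uv} - d_{st} = 2\ell_e + \bigl(d(u,e_A) + d(v,e_A) - d_{uv}\bigr) + \bigl(d(s,e_B) + d(t,e_B) - d_{st}\bigr),
\end{equation*}
and the triangle inequality forces each bracketed term to be $\geq 0$. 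Exponentiating gives $\sqrt{\rho_{ut}\rho_{vs}/(\rho_{uv}\rho_{st})} \leq 1 - \lambda_e$ for every quadruple. Equality requires the path $u$-$v$ to pass through $e_A$ and $s$-$t$ through $e_B$. If $|A|=1$, one takes $u=v$ equal to the unique leaf in $A$ (so $e_A=u$ and the bracket vanishes); if $|A|>1$ then $e_A$ is internal with $\deg(e_A)\geq 3$ by \ref{item:pf1}, and upon removing $e$ at least two distinct branches at $e_A$ each contain a leaf, yielding admissible $u,v$. The symmetric choice on the $B$-side produces a quadruple attaining the bound, giving the claimed formula.

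For part (2), the key observation is that the quadruple $(u_e,v_e,s_e,t_e)$ realizing the maximum in part (1) depends only on the topology $E$, not on the weights. Fix such a choice for every $e \in E$ and define
\begin{equation*}
\xi_E(P) \coloneqq \Bigl(1 - \sqrt{\rho_{u_e t_e}\rho_{v_e s_e}/(\rho_{u_e v_e}\rho_{s_e t_e})}\Bigr)_{e \in E}
\end{equation*}
for $P = (\rho_{uv}) \in \spd$ lying in the open set $U$ on which $\rho_{u_e v_e}\rho_{s_e t_e} > 0$ and $\rho_{u_e t_e}\rho_{v_e s_e} > 0$ for every $e$. Since all these leaf pairs lie in a common tree component of any $F \in \G{E}$, one has $\phi_E((0,1)^E) \subset U$. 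The map $\xi_E$ is smooth on $U$, and part (1) gives $\xi_E \circ \phi_E = \mathrm{id}_{(0,1)^E}$. Differentiating at any $\lambda$ produces $D\xi_E(\phi_E(\lambda)) \circ D\phi_E(\lambda) = \mathrm{id}_{\RR^E}$, so $D\phi_E(\lambda)$ has full column rank $|E|$.

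For part (3), a smooth embedding is precisely an injective smooth immersion that is a homeomorphism onto its image. Smoothness of $\phi_E$ is immediate from \Cref{eq:phi-map-restricted-to-grove}; injectivity on the grove is a special case of \Cref{thm:phi-well-defined-injective-into-spd}; the immersion property is part (2). The inverse of $\phi_E$ on its image is just the restriction of the continuous map $\xi_E$ from part (2), so $\phi_E^{-1}$ is continuous on $\phi_E((0,1)^E)$, completing the homeomorphism property and hence the embedding claim.

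The principal technical obstacle is the equality case in part (1): simultaneously handling the boundary situation where $|A|=1$ or $|B|=1$ (which forces repeated labels in the quadruple) and, in the opposite regime, exploiting the $\deg\geq 3$ condition at internal vertices from \ref{item:pf1} to select leaves lying in distinct branches at $e_A$ and $e_B$. Once this structural lemma is in place, the explicit inversion $\xi_E$ drives the remaining arguments mechanically.
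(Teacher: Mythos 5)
Your proof is essentially correct and runs parallel to the paper's, but you reprove from scratch the Buneman-type formula in part (1) that the paper simply cites (\cite[Lemma 8, Theorem 2]{buneman_recovery_1971}); both establish that $\ell_e = \min_{u,v\in A,\, s,t\in B}\tfrac12(d_{ut}+d_{vs}-d_{uv}-d_{st})$. For part (2), your construction of the smooth left-inverse $\xi_E$ and differentiation of $\xi_E\circ\phi_E = \mathrm{id}$ is a cleaner packaging of the paper's argument, which instead shows that any $x$ in the kernel of $(\mathrm{d}\phi_E)_\lambda$ yields quantities $h_{uv}=\sum_{e\in E(u,v)}x_e/(1-\lambda_e)$ that vanish, and then recovers $x_e=0$ from the Buneman formula applied linearly to $h$. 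The crucial observation your approach rests on --- that the maximising quadruple $(u_e,v_e,s_e,t_e)$ can be chosen once and for all from the topology $E$, independently of $\lambda$ --- is correct, since equality holds exactly when $e_A$ lies on the $u$--$v$ path and $e_B$ on the $s$--$t$ path, a purely combinatorial condition. Your part (3) is the same as the paper's, just more explicit about where the continuity of $\phi_E^{-1}$ comes from.

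One minor imprecision in your equality argument for part (1): the dichotomy $|A|=1$ versus $|A|>1$ is not quite the right case split, since the claim \emph{``if $|A|>1$ then $e_A$ is internal''} fails when $e_A\in L$ but $\deg(e_A)\geq 2$ (labelled vertices are permitted to have high degree under \ref{item:pf1}, which only constrains $\gV\setminus L$). The correct split is $e_A\in L$ versus $e_A\notin L$: in the first case take $u=v=e_A$ so the bracket on the $A$-side is identically zero; in the second, $\deg(e_A)\geq 3$ holds by \ref{item:pf1} and your branch argument applies. This is an easy repair and does not affect the downstream parts, but as written the argument skips a case.
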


\begin{proof}
For the first assertion consider $e=A\vert B$, where $A\cup B= L_\alpha$, for some  $1\leq \alpha \leq K$ and where $L_1,\ldots,L_K$ is the leaf partition induced by $E$. Then the matrix entries $d_{uv} \coloneqq -\log\,\rho_{uv}$ ($u,v\in L_\alpha$) define a metric on $L_\alpha$, as noted in \Cref{rmk:phi-spd-repr}. For such a metric, 
 \cite[Lemma~8]{buneman_recovery_1971} asserts that one can assign a tree $(\gV_\alpha,\gE_\alpha,\ell^{\alpha})$ where
\begin{equation}\label{eq:length-min-char}
    \ell^{\alpha}_e = \min_{\substack{u,v\in A\\s, t\in B}}\frac{1}{2}\big(d_{ut} + d_{vs} - d_{uv} - d_{st}\big),
\end{equation}
which is uniquely determined by \cite[Theorem~2]{buneman_recovery_1971}. Due to our uniqueness results from \Cref{thm:definitions-of-phylo-forest-equivalent} and \Cref{thm:phi-spd-repr}, due to \Cref{eq:lambda-ell}, $\lambda_e = 1 - \exp(-\ell^{\alpha}_e)$ and hence, using $\rho_{uv} = \exp(-d_{uv})$, the asserted equation follows at once from \Cref{eq:length-min-char}.

For the second assertion, let $e\in E$ and suppose that $F=(E,\lambda)$ decomposes into $K$ subtrees inducing the leaf partition $L_1,\ldots, L_K$. Using \Cref{eq:phi-map-restricted-to-grove}, if either $u,v \in L$ are in different subtrees or $u=v$, then
$$ \bigg(\frac{\dd\phi_E}{\dd\lambda_e}(\lambda)\bigg)_{uv}
        = 0\,.
$$
Else, if $u,v\in L_\alpha$ for some $1\leq \alpha \leq K$, then $\rho_{uv} >0$ and with  the Kronecker delta $\delta$,
    \begin{equation}\label{eq:partial-derivative-of-embedding}
        \bigg(\frac{\dd\phi_E}{\dd\lambda_e}(\lambda)\bigg)_{uv}
        =
        -\delta_{e\in E(u,v)}\prod_{\substack{\tilde{e}\in E(u,v)\\\tilde{e}\neq e}} \big(1 - \lambda_{\tilde{e}}\big)
        = - \frac{\rho_{uv}}{1 - \lambda_e} \delta_{e\in E(u,v)}\,,
    \end{equation}
    Thus, for every $x \in \RR^E$, we have 
    \begin{equation*}
        \Big((\dr\phi_E)_\lambda(x)\Big)_{uv} = -\rho_{uv}\,\sum_{e\in E} \frac{x_e}{1 - \lambda_e} \delta_{e\in E(u,v)}\,,
    \end{equation*}
    so that    $\Big((\dr\phi_E)_\lambda(x)\Big)_{uv}=0$ implies
    \begin{equation}
        \label{eq:proof-full-rank-phi}
         0 =\sum_{e\in E(u,v)} \frac{x_e}{1 - \lambda_e} \eqqcolon h_{uv}.
    \end{equation}
    We now view each of the $\ell'_e := \frac{x_e}{1 - \lambda_e}$, $e\in E$  as a real valued ``length'' of $e$. With a representative $(\gV,\gE)$ of $E$ with leaf set partition $L_1,\ldots,L_K$, for every $e\in E$ there are  $v_1,v_2\in \gV_\alpha$ with suitable $1 \leq \alpha \leq K$ such that $e$ corresponds to $\{v_1,v_2\}\in \gE_\alpha$. In particular, since $(\gV_\alpha,\gE_\alpha)$ is a tree, there are  $u,v,s,t \in L_\alpha$ (not necessarily all of them distinct), such that 
        \begin{equation*}
        \ell'_e = \frac{1}{2}\big(h_{uv} + h_{st} - h_{ut} - h_{vs}\big).
    \end{equation*}
    If the r.h.s. is zero due to \Cref{eq:proof-full-rank-phi}, then $x_e =0$, yielding that $(\dr\phi_E)_\lambda$ has full rank, as asserted.
    
    The third assertion follows directly from 1.~and 2., i.e.~$\phi_E$ is bijectively smooth onto its image and its differential is injective.
\end{proof}

In the following, we are concerned with $\bar\phi_E(\lambda)$ if $\lambda\in(0,1)^E$ approaches the boundary.
The next result characterises exactly under which conditions $\bar\phi_E(\lambda)$ stays in the image $\phi(\W)$ of \waldspace under $\phi$. 

\begin{lemma}\label{lem:boundary-of-cube-when-spd}
  Let $F\in\F$ with topology $[F] = E$ and let $\lambda^*\in\dd([0,1]^E)$ with $\bar\phi_E(\lambda^*) = (\rho_{uv}^*)_{u,v=1}^N$.
  Then
  \begin{equation*}
      \bar\phi_E(\lambda^*)\in\phi(\F) \iff \bar\phi_E(\lambda^*) \in \spd \iff \rho_{uv}^* < 1 \text{ for all } u,v\in L \text{ with } u\neq v.
  \end{equation*}
\end{lemma}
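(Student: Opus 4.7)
The plan is to establish the cyclic chain of implications $\bar\phi_E(\lambda^*)\in\phi(\F) \Rightarrow \bar\phi_E(\lambda^*)\in\spd \Rightarrow \rho_{uv}^*<1$ for all $u\neq v$ $\Rightarrow \bar\phi_E(\lambda^*)\in\phi(\F)$. The first arrow is immediate, since $\phi(\F)\subseteq\spd$ by \Cref{thm:phi-well-defined-injective-into-spd}.

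For the second implication I would first observe that properties \ref{item:diagonal-is-1}, \ref{item:4pt-condition} and \ref{item:entries-geq-0} of \Cref{thm:phi-spd-repr} are closed conditions on $\sym$, being described by polynomial equalities or non-strict polynomial inequalities in the matrix entries. Since $\phi_E(\lambda)\in\phi(\F)$ satisfies all three for every $\lambda\in(0,1)^E$ and $\bar\phi_E$ is continuous, the boundary value $\bar\phi_E(\lambda^*)$ inherits them. Granted additionally that $\bar\phi_E(\lambda^*)\in\spd$, I would then re-run the argument from \Cref{rmk:phi-spd-repr}: if $\rho_{uv}^*=1$ for some $u\neq v$, condition \ref{item:triangle}, which follows from \ref{item:diagonal-is-1} and \ref{item:4pt-condition}, forces two rows of $\bar\phi_E(\lambda^*)$ to coincide, contradicting positive-definiteness.

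For the third implication the same closedness argument guarantees \ref{item:diagonal-is-1}, \ref{item:4pt-condition} and \ref{item:entries-geq-0} for $\bar\phi_E(\lambda^*)$, while the hypothesis $\rho_{uv}^*<1$ supplies \ref{item:R5} directly. I would then repeat, essentially verbatim, the construction used in the ``$\Longleftarrow$'' direction of the proof of \Cref{thm:phi-spd-repr}: set $d_{uv}^*\coloneqq-\log\rho_{uv}^*\in[0,\infty]$, partition $L$ by the relation $u\sim v\iff d_{uv}^*<\infty$, and apply \citet[Theorem~7.2.6]{semple_phylogenetics_2003} on each equivalence class to obtain a unique tree realising the finite metric $d^*$ restricted to that class (whose additive four-point condition is encoded in \ref{item:4pt-condition}). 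The strict inequality $d_{uv}^*>0$ for $u\neq v$ rules out coincident leaf labels, so the union of these trees yields a legitimate forest $F^*\in\F$ with $\phi(F^*)=\bar\phi_E(\lambda^*)$.

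The main subtlety, compared to the original proof of \Cref{thm:phi-spd-repr}, is that the third implication does not grant $\bar\phi_E(\lambda^*)\in\spd$ as a starting hypothesis -- one only has the weaker \ref{item:R5}. The resolution is that the construction itself outputs $F^*\in\F$, whereupon \Cref{thm:phi-well-defined-injective-into-spd} retroactively guarantees $\phi(F^*)=\bar\phi_E(\lambda^*)\in\spd$, closing the cycle. This makes transparent that, among points of the closed cube $[0,1]^E$, the precise algebraic constraint singling out wald-representable matrices is \ref{item:R5}.
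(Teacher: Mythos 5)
Your proof is correct and takes essentially the same approach as the paper: the implication from positive definiteness to \ref{item:R5} comes from \Cref{rmk:phi-spd-repr}, and the implication from \ref{item:R5} to $\phi(\F)$-membership reuses the Semple--Steel reconstruction from the ``$\Longleftarrow$'' direction of the proof of \Cref{thm:phi-spd-repr}, exactly as in the paper. Your cyclic organization and the explicit observation that the Semple--Steel construction requires only \ref{item:diagonal-is-1}--\ref{item:entries-geq-0} together with \ref{item:R5} as input, with positive definiteness recovered afterward via \Cref{thm:phi-well-defined-injective-into-spd}, is arguably a cleaner framing than the paper's, which derives the first equivalence from the well-definedness of a corollary stated afterward.
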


\begin{proof}
    The first equivalence follows from that \Cref{eq:boundary-of-grove} is well-defined. 
    We prove the second equivalence.\\
    ``$\Rightarrow$'': Follows from \Cref{rmk:phi-spd-repr}, Condition (R5).\\
    ``$\Leftarrow$'': Analogously to the proof of \Cref{thm:phi-spd-repr}, ``$\Leftarrow$'', we find a phylogenetic forest in the sense of \cite[Chapter 2.8]{semple_phylogenetics_2003}, whose tree metric coincides with the one obtained from $\bar\phi_E(\lambda^*)$, but there might be multiply labelled vertices. 
    However, this is impossible due to $\rho_{uv}^* < 1$ for any $u\neq v$, which is equivalent to a distance greater than zero between $u$ and $v$.
    Therefore, there exists a phylogenetic forest $F'\in\F$ with $\phi(F') = \bar\phi_E(\lambda^*)$, and thus by \Cref{thm:phi-spd-repr}, $\bar\phi_E(\lambda^*)\in\spd$.
\end{proof}

The previous result immediately shows which matrices in $\spd$ form the boundary of a \grove.

\begin{corollary}
    Let $E$ be a \waldtop. Then the {boundary of the \grove} $\G{E}$ in $\W$ is given by
    \begin{equation}\label{eq:boundary-of-grove}
        \dd \G{E} = \Big\{\phi^{-1}\big(\bar\phi_E(\lambda^*)\big) \colon \lambda^*\in \dd([0,1]^E),\;\bar\phi_E(\lambda^*)\in\spd \Big\}\,.
    \end{equation}
\end{corollary}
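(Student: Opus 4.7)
The plan is to use the homeomorphism $\phi\colon\cW\to\phi(\cW)\subseteq\cP$ to convert the frontier question into one about limits of the analytic extension $\bar\phi_E$ from \Cref{rmk:phi-analytic-cont}, reading $\dd\G{E}$ as the set $\overline{\G{E}}\setminus\G{E}$ in $\cW$. \Cref{lem:boundary-of-cube-when-spd} already pins down exactly which boundary matrices $\bar\phi_E(\lambda^*)$ lie in $\phi(\cW)$, so the task reduces to matching that set with the frontier.

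For the inclusion ``$\supseteq$'', I would start from $\lambda^*\in\dd([0,1]^E)$ with $\bar\phi_E(\lambda^*)\in\cP$. Then \Cref{lem:boundary-of-cube-when-spd} guarantees that $F^*:=\phi^{-1}(\bar\phi_E(\lambda^*))$ is a well-defined wald. Picking any sequence $\lambda_n\in(0,1)^E$ with $\lambda_n\to\lambda^*$, continuity of $\bar\phi_E$ yields $\phi_E(\lambda_n)\to\bar\phi_E(\lambda^*)=\phi(F^*)$, and the homeomorphism $\phi$ turns this into $(E,\lambda_n)\to F^*$ in $\cW$, placing $F^*\in\overline{\G{E}}$. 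The non-trivial part is ruling out $F^*\in\G{E}$: assuming $F^*=(E,\mu)$ with $\mu\in(0,1)^E$, I would apply the closed-form expression from \Cref{lem:grove-topology-eucl-phi-is-embedding}.1 to $F^*$ and substitute the product formula \eqref{eq:phi-map-extended-analytic} for $\bar\phi_E(\lambda^*)$; after telescoping, the ratio under the square root factors as $(1-\lambda^*_e)^2$ multiplied by squared products over intersections of subtree paths, and these intersections can be made empty by choosing the quadruple $u,v,s,t$ on distinct branches emerging from the two endpoints of $e$. The maximum then equals $1-\lambda^*_e$, forcing $\mu_e=\lambda^*_e$ for every $e\in E$, contradicting $\lambda^*\in\dd([0,1]^E)$.

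For the reverse inclusion, I would take $F^*\in\overline{\G{E}}\setminus\G{E}$ and a sequence $F_n=(E,\lambda_n)\to F^*$ in $\cW$, so that $\phi_E(\lambda_n)\to\phi(F^*)$ in $\cP$ by continuity of $\phi$. Compactness of $[0,1]^E$ lets me pass to a subsequence with $\lambda_n\to\lambda^*\in[0,1]^E$; continuity of $\bar\phi_E$ then forces $\bar\phi_E(\lambda^*)=\phi(F^*)\in\cP\cap\phi(\cW)$. If $\lambda^*$ lay in $(0,1)^E$, injectivity of $\phi_E$ from \Cref{lem:grove-topology-eucl-phi-is-embedding}.3 would give $F^*=(E,\lambda^*)\in\G{E}$, contradicting the choice of $F^*$; hence $\lambda^*\in\dd([0,1]^E)$ as required.

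The main obstacle will be the first inclusion, specifically the branch-choosing argument that evaluates the max formula at $\bar\phi_E(\lambda^*)$. The algebraic telescoping is routine, but one must verify that the combinatorial choice of $u,v,s,t$ realising empty path-intersections is always available --- this relies on internal vertices having degree at least three (condition (PF1)) --- and the degenerate case of a singleton side $A$ or $B$ must be handled separately by allowing $u=v$ (or $s=t$), so that the corresponding subtree path is trivially empty.
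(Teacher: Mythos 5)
The paper itself supplies no proof here---the corollary is announced as an immediate consequence of Lemma~\ref{lem:boundary-of-cube-when-spd}---so there is no in-paper argument to compare against. Your proposal is correct, and it usefully makes explicit the one step the paper glosses over, namely that $\bar\phi_E\big((0,1)^E\big)$ and $\bar\phi_E\big(\dd[0,1]^E\big)\cap\cP$ are disjoint; without this the right-hand side of \eqref{eq:boundary-of-grove} need not avoid $\G{E}$. Your route to the disjointness via the closed-form edge-weight recovery in Theorem~\ref{lem:grove-topology-eucl-phi-is-embedding}(1) works (the exponent of each $(1-\lambda^*_f)$, $f\neq e$, in the four-point ratio is $0$ or $2$ by split compatibility, so the maximum is exactly $(1-\lambda^*_e)^2$, achieved by picking $u,v$ in distinct subtrees hanging off the $A$-endpoint of $e$ and likewise $s,t$ on the $B$-side, with $u=v$ or $s=t$ for singleton sides). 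A slightly shorter alternative, staying inside the same toolkit, is to invoke Theorem~\ref{lem:grove-topology-eucl-phi-is-embedding}(3) directly: if $\bar\phi_E(\lambda^*)=\phi_E(\mu)$ with $\mu\in(0,1)^E$, choose $\lambda_n\in(0,1)^E$ with $\lambda_n\to\lambda^*$; then $\phi_E(\lambda_n)\to\phi_E(\mu)$ in $\phi_E\big((0,1)^E\big)$, and since $\phi_E$ is a homeomorphism onto its image, $\lambda_n=\phi_E^{-1}\big(\phi_E(\lambda_n)\big)\to\mu$, forcing $\lambda^*=\mu\in(0,1)^E$, a contradiction. Your reverse inclusion is exactly the compactness/continuity argument one expects and is fine.
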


The following result gives a first glimpse on how different groves are connected through the convergence of \walds.
\begin{theorem}\label{thm:wald-convergence-general}
    Let $\W \ni (E_n,\lambda^{(n)}) = F_n \to F' = (E',\lambda') \in \W$. 
    Then there is a subsequence $n_k$, $k\in \NN$, and a common topology $E$ such that $E_{n_k} = E$ for all $k\in \NN$. 
    Furthermore
    \begin{enumerate}
        \item $\lambda^{(n_k)}$ has a cluster point $\lambda^* \in [0,1]^E$,
        \item and $\phi(F') = \bar \phi_E(\lambda^*)$ for every of such cluster point $\lambda^* \in [0,1]^E$,
        \item and $F'\in\dd\G{E}$ whenever $E\neq E'$.
    \end{enumerate}
\end{theorem}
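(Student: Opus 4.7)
The plan is to combine three ingredients: the finiteness of the set of topologies $\cE$, the compactness of the closed cube $[0,1]^E$, and the continuity of the real-analytic extension $\bar\phi_E$ from \Cref{rmk:phi-analytic-cont}, all tied together by the fact that $\phi\colon\W\to\spd$ is a homeomorphism onto its image (\Cref{def:wald-space}). First I would observe that $\cE$ is finite, since every $E\in\cE$ is a collection of pairwise compatible splits of subsets of the finite label set $L$, of which there are only finitely many. Hence by the pigeonhole principle some $E\in\cE$ is attained infinitely often by $(E_n)$, and along such a subsequence $n_k$ the weights $\lambda^{(n_k)}$ lie in the compact cube $[0,1]^E$; Bolzano--Weierstrass then yields a cluster point $\lambda^*\in[0,1]^E$, proving item (1).

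For (2), I would fix an arbitrary cluster point $\lambda^*\in[0,1]^E$ and pass to a sub-subsequence $n_{k_j}$ with $\lambda^{(n_{k_j})}\to\lambda^*$. Continuity of $\phi$ (as a homeomorphism) forces $\phi(F_{n_{k_j}})\to\phi(F')$ in $\spd$, while continuity of $\bar\phi_E$ on all of $\RR^E$ yields $\bar\phi_E(\lambda^{(n_{k_j})})\to\bar\phi_E(\lambda^*)$ in $\sym$. Along the common topology $E$ one has by the very definitions \Cref{eq:phi-map-restricted-to-grove} and \Cref{eq:phi-map-extended-analytic} the identity $\phi(F_{n_{k_j}})=\phi_E(\lambda^{(n_{k_j})})=\bar\phi_E(\lambda^{(n_{k_j})})$, so uniqueness of limits in $\sym$ forces $\phi(F')=\bar\phi_E(\lambda^*)$.

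For (3), I would argue by contradiction: suppose $E\neq E'$ and $\lambda^*\in(0,1)^E$. Then $\bar\phi_E(\lambda^*)=\phi_E(\lambda^*)\in\phi(\G{E})$, so $F'=\phi^{-1}(\bar\phi_E(\lambda^*))\in\G{E}$, contradicting $F'\in\G{E'}$ since distinct groves are disjoint by \Cref{def:split-based-advanced}. Therefore $\lambda^*\in\dd([0,1]^E)$, and because $\phi(F')=\bar\phi_E(\lambda^*)\in\spd$ is automatic from $F'\in\W$, the boundary characterisation \Cref{eq:boundary-of-grove} places $F'$ on $\dd\G{E}$.

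I anticipate no substantial obstacle: the argument is essentially a packaging of facts already established in the excerpt. The only point requiring care is the bookkeeping of nested subsequences in part (2) — one first thins out to reach a common topology $E$, then thins again to obtain convergence of the weights — together with the observation that the claim $\phi(F')=\bar\phi_E(\lambda^*)$ must be shown for \emph{every} cluster point, not merely the one produced by the first refinement, which is precisely why one re-applies continuity of $\bar\phi_E$ along each candidate sub-subsequence.
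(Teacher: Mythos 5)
Your proposal is correct and follows essentially the same route as the paper's own proof: finiteness of $\cE$ plus the pigeonhole principle to extract a constant-topology subsequence, Bolzano--Weierstrass in the compact cube $[0,1]^E$ for the cluster point, continuity of $\bar\phi_E$ together with uniqueness of limits for item (2), and the contradiction argument via disjointness of groves plus the boundary characterisation \Cref{eq:boundary-of-grove} for item (3). Your extra bookkeeping about nested subsequences and verifying the claim for every cluster point is a slightly more explicit rendering of the same steps, not a different argument.
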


\begin{proof}
    For the first assertion, noting that there are only finitely many \waldtops, there needs to exist a subsequence $F_{n_k}$ of $F_n$ with $E_{n_k} = E$ for some topology $E$ for all $k\in\NN$, and thus, since $F_{n_k}\in\G{E}\cong(0,1)^E$, there exists $\lambda^{(n_k)}\in(0,1)^E$ with $\phi_E(\lambda^{(n_k)}) = \phi(F_{n_k})$ for all $k\in\NN$.
    
    For 1., by Bolzano-Weierstra\ss, there needs to exist a cluster point $\lambda^*\in[0,1]^E$ of $\lambda^{(n_k)}$.
    
    For 2., for any cluster point $\lambda^*\in[0,1]^E$, from the continuity of $\bar\phi_E$, $\bar\phi_E(\lambda^*)$ is a cluster point of $(\phi(F_n))_{n\in\NN}$ and by $F_n\to F'$ we find $\phi(F_n)\to\phi(F')$ and thus $\bar\phi_E(\lambda^*) = \phi(F')$.
    
    For 3., let $\lambda^*\in[0,1]^E$ be a cluster point. 
    If $\lambda^*\in(0,1)^E$ then $F'\in\G{E}$ and $E=E'$, a contradiction. Thus $\lambda^*\in\dd([0,1]^E)$, and due to $\bar\phi_E(\lambda^*) = \phi(F') \in\spd$, the assertion follows.
\end{proof}

The following example teaches that when $F_n \to F$, $\lambda^{(n)}$ can have distinct cluster points.

\begin{example}\label{exa:phylogenetic-forest-sequences-weak-conv}
    Let $N = 3$, set $e_i\coloneqq u\vert (L\setminus\{u\})$, $u\in L = \{1,2,3\}$ and $E = \{e_1,e_2,e_3\}$.
    Define the sequence of \walds  $F_n \coloneqq (E, \lambda^{(n)})$, $n\in\NN$, using a sequence $\eps_n\in (0,\frac{1}{4})$ with $\eps_n\to 0$ as $n\to\infty$, via
    \begin{align*}
        &\lambda^{(2n-1)}_{e_1} \coloneqq \frac{1}{2} - \eps_n, & &\lambda^{(2n-1)}_{e_2} \coloneqq\eps_n, & &\lambda^{(2n-1)}_{e_3}\coloneqq 1 - \eps_n,\\
        &\lambda^{(2n)}_{e_1} \coloneqq \eps_n, & &\lambda^{(2n)}_{e_2} \coloneqq \frac{1}{2} - \eps_n, & &\lambda^{(2n)}_{e_3}\coloneqq 1 - \eps_n.
    \end{align*}
    The corresponding forests are depicted in \Cref{fig:example-phylogenetic-forest-sequences-weak-conv}.
    Clearly, the sequence $\lambda^{(n)}$ ($n\in\NN$) has two distinct cluster points
    $ (1/2,0,1), (0, 1/2,1) \in (0,1)^3$. We observe, however, that
    \begin{align*}
        \phi(F_{2n-1}) = 
        \begin{pmatrix}
            1 & (1-\eps_n)(\frac{1}{2} + \eps_n) & (\frac{1}{2} + \eps_n)\eps_n\\
            (1-\eps_n)(\frac{1}{2} + \eps_n) & 1 & (1 - \eps_n)\eps_n\\
            (\frac{1}{2} + \eps_n)\eps_n & (1 - \eps_n)\eps_n & 1
        \end{pmatrix}
        \overset{n\to\infty}{\longrightarrow}
        \begin{pmatrix}
            1 & \frac{1}{2} & 0\\
            \frac{1}{2} & 1 & 0\\
            0 & 0 & 1
        \end{pmatrix},
    \end{align*}
    and similarly, $\phi(F_{2n})$ converges to the same matrix as $n\to\infty$.
    Letting $e' = 1\vert2$ and defining $F' = (E',\lambda') = (\{e'\},\lambda')$ with $\lambda'_{e'} = \frac{1}{2}$ (i.e. label partitions $L'_1 = \{3\}$, $L'_2 = \{1,2\}$; cf.~\Cref{fig:example-phylogenetic-forest-sequences-weak-conv}), we have that $\phi(F_n)\to\phi(F')$, so $F_n\to F'$.
\end{example}

\begin{figure}[ht]
    \centering
    \includegraphics[width=0.6\linewidth]{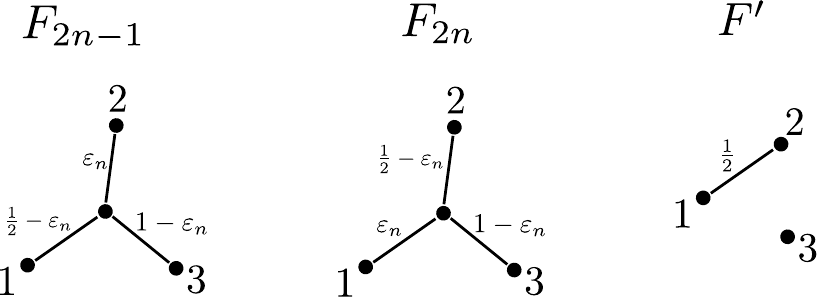}
    \caption{A sequence of \walds (left and middle) converging (right) but having different $\lambda$ cluster points as detailed in \Cref{exa:phylogenetic-forest-sequences-weak-conv}.}
    \label{fig:example-phylogenetic-forest-sequences-weak-conv}
\end{figure}

\Cref{thm:wald-convergence-general} shows that whenever a sequence of \walds $F_n\in\G{E}$ converges to a \wald $F'\in\W$ with topology $E'$ and $F'\notin\G{E}$, then $F'\in\dd\G{E}$. 
%In this sense we have found a relationship between $E'$ and $E$.
In the following section we make this relationship between $E'$ and $E$ more precise and unravel the boundary correspondences via a partial ordering on the \waldtops.

\subsection{At Grove's End}\label{scn:groves-end}

In light of \Cref{thm:wald-convergence-general}, we investigate how two \wald topologies $E = [F]$ and $E' = [F']$ are related to each other.

\begin{definition}
    Let $F\in\F$ be a wald with topology $E = [F]$.
    For an edge $e = A\vert B\in E$, we define the edge \emph{restricted} to some subset $L'\subseteq L$ by
    \begin{equation*}
        e\res{L'} \coloneqq (A\cap L')\vert(B\cap L')
    \end{equation*}
    if both of the sets above are non void, else, we say that the \emph{restriction does not exist}.
    In case of existence, we also say that $e\res{L'}$ is a \emph{valid split}.
\end{definition}

The following definition is from \cite{moulton_peeling_2004} and translated into the language of \walds and their topologies.

\begin{definition}\label{def:partial-ordering-on-forest-topologies}
    For two \walds $F,F'\in\F$ with topologies $E = [F],E' = [F']$, respectively, we say that 
    \begin{eqnarray}\label{eq:subforest}
    E'=[F']\leq [F] = E
    \end{eqnarray}
    if all of the following three properties hold:
    \begin{description}
        \item[Refinement] with  the partitions $L_1,\dots,L_K$ and $L'_1,\ldots,L_{K'}'$ of $L$ induced by $E'$ and $E$, respectively,  for every $1\leq \alpha'\leq K'$ there is $1\leq\alpha\leq K$ with $L'_{\alpha'} \subseteq L_\alpha$; 
        \item[Restriction] for every $1\leq \alpha'\leq K'$,
        \begin{equation*}
            E'_{\alpha'} \subseteq E\res{L'_{\alpha'}} \coloneqq \big\{\tilde{e}\colon\exists e \in E\st \tilde{e} \coloneqq e\res{L'_{\alpha'}}\text{ is a valid split}\big\}
        \end{equation*}
        where the r.h.s. is the set of splits $E$ \emph{restricted to $L'_{\alpha'}$}; 
        % \item[Restriction:] for every such $\alpha'$ and $\alpha$ above and $A'\vert B' = e'\in E'_{\alpha'}$ there is some $e=A\vert B\in L_\alpha $ where $A'=A\cap L'_{\alpha'},B'=B\cap L'_{\alpha'}$; for this we write
        % $$ e\vert_{L'_{\alpha'}} \coloneqq  e'\mbox{ and set }R_{e'} \coloneqq  \{e\in E: e\vert_{L'_{\alpha'}} = e'\}, ~R \coloneqq  E \setminus \bigcup_{e' \in E} R_{e'}\,;$$
        \item[Cut] 
        for every $1\leq \alpha'_1 \neq \alpha'_2\leq K'$ and $1\leq \alpha\leq K$ with $L'_{\alpha'_1}, L'_{\alpha'_2}\subset L_\alpha$, there is some 
        $$A\vert B \in E\mbox{ with }L'_{\alpha'_1}\subseteq A,~L'_{\alpha'_2}\subseteq B\,.$$
    \end{description}
    Further, we say $E' < E$ if $E\neq E' \leq E$. We also write $F' < F$ if $E' < E$.
\end{definition}

The restriction condition above corresponds to the definition of a tree \emph{displaying} another tree in \cite{moulton_peeling_2004}.
From \cite[Lemma~3.1]{moulton_peeling_2004}, it follows at once that the relation $\leq$ as defined in \Cref{eq:subforest} is a partial ordering.

\begin{example}\label{exa:partial-ordering-1}
    Let $N = 5$, so $L = \{1,\dots,5\}$.
    Define three \wald topologies
    \begin{align*}
        E &= \big\{1\vert2345, 12\vert345, 3\vert1245, 123\vert45, 1234\vert5, 1235\vert4\big\},\\
        E_1' &= \{2\vert3, 4\vert5\},\\
        E_2' &= \{2\vert5, 3\vert4\big\}.
    \end{align*}
    They are depicted in \Cref{fig:example-partial-ordering-1}.
    Then $E_1' < E$, as the refinement property holds, the restriction property, due to $E\res{\{2,3\}} = \{2\vert 3\}$, $E\res{\{4,5\}} = \{4\vert 5\}$, and the cut property, since the edge $1\vert 2345$ separates $\{1\}$ from $\{2,3\}$ and  $\{4,5\}$, and $123\vert 45$ separates $\{2,3\}$ from $\{4,5\}$.  Separating edges like $1\vert 2345$ cannot be restricted to any of the leaf sets  $\{1\}$, $\{2,3\}$ and  $\{4,5\}$, and if edges are restricted, they can only be restricted to one leaf set, e.g. $12\vert 345$ can be restricted only to $\{2,3\}$ and not to any of the others.
    
    In contrast, $E_2' \not\leq E$, although the refinement and restriction properties are satisfied, the cut property is not, since there is no edge $A\vert B = e\in E$ with $\{2,5\}\subseteq A$ and $\{3,4\}\subseteq B$.
\end{example}

\begin{figure}[ht]
    \centering
    \includegraphics[width=0.9\linewidth]{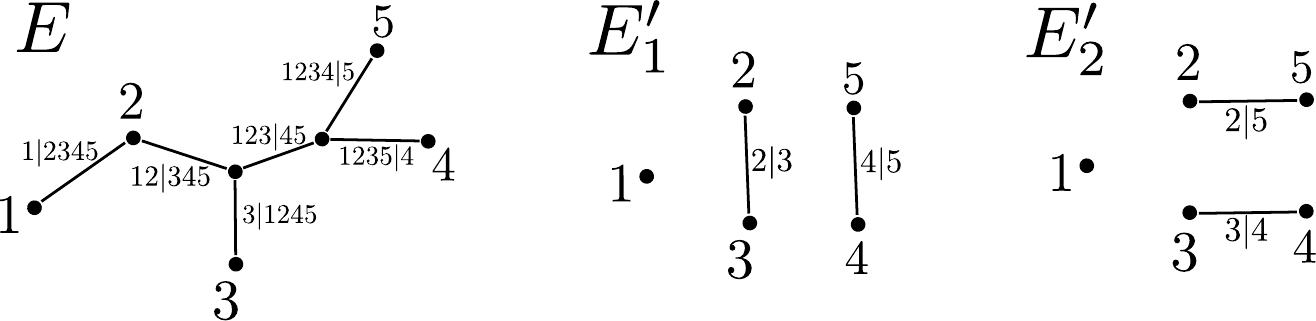}
    \caption{Wald topologies $E$, $E_1'$ and $E_2'$ from \Cref{exa:partial-ordering-1}, where $E_1' < E$ but $E_2'\not\leq E$.}
    \label{fig:example-partial-ordering-1}
\end{figure}

\begin{definition}\label{rmk:cut-disappear}
    Let $E,E'$ be \wald topologies with $E' \leq E$.
    \begin{enumerate}
        \item 
            For each edge $e' \in E'_{\alpha'}$, $1\leq\alpha'\leq K'$, denote the set of all \emph{corresponding} splits in $E$ by 
            \begin{equation*}
                R_{e'} \coloneqq \big\{e\in E\colon e\res{L'_{\alpha'}} = e'\big\}.
            \end{equation*}
        \item 
            Furthermore, denote the set of all \emph{disappearing} splits in $E$ with
            \begin{equation*}
                R_{dis} \coloneqq \big\{e\in E\colon \exists\alpha'\mbox{ s.t. } e\res{L'_{\alpha'}} \mbox{ is a valid split of } L'_{\alpha'}\mbox{, but } e\res{L'_{\alpha'}}\notin E'\big\}.
            \end{equation*}
        \item
            Denote the set of all \emph{cut} splits with 
            \begin{equation*}
                R_{cut} \coloneqq \big\{e\in E\colon \not\exists\alpha'\mbox{ s.t. } e\res{L'_{\alpha'}} \mbox{ is a valid split of } L'_{\alpha'} \big\}.
            \end{equation*}
    \end{enumerate}
\end{definition}

\begin{example}\label{exa:partial-ordering-2-3}
\begin{enumerate}
    \item 
        We revisit \Cref{exa:partial-ordering-1}, cf.~also \Cref{fig:example-partial-ordering-1}.
        Note that with respect to $E_1' < E$, we have, for instance, with $e' = 2\vert 3$ that $R_{e'} = \{12\vert 345, 3\vert 1245\}$, $R_{dis} = \emptyset$ and $R_{cut} = \{1\vert 2345, 123\vert 45\}$. By definition, none of the cut edges can be restricted.
    \item
        Let $N = 4$, so $L = \{1,2,3,4\}$.
        Define two wald topologies with 
        \begin{align*}
            E &= \big\{1\vert234, 2\vert 134, 3\vert124, 123\vert4, 12\vert34\big\},\\
            E' &= \big\{1\vert234, 2\vert 134, 3\vert124, 123\vert4\big\},
        \end{align*}
        where $E$ is a fully resolved tree with interior edge $12\vert 34$ and $E'$ is a \emph{star tree}, i.e.~four leaves attached to one interior vertex, cf.~\Cref{fig:example-partial-ordering-2}.
        Then $E' < E$ since $E'\subset E$, and the split $12\vert 34$ disappears, i.e.~$R_{dis} = \{12\vert 34\}$.
        Furthermore, $R_{cut} = \emptyset$ and $R_{e'} = \{e'\}$ for all $e'\in E'$.
    \item
        Let $N = 5$, so $L = \{1,2,3,4,5\}$.
        Define two wald topologies with 
        \begin{align*}
            E &= \big\{1\vert2345, 2\vert 1345, 3\vert 1245, 4\vert 1235, 5\vert 1234, 12\vert345, 123\vert 45\big\},\\
            E' &= \big\{1\vert245, 2\vert 145, 4\vert 125, 5\vert 124, 12\vert45\big\},
        \end{align*}
        where $E$ is a fully resolved tree with two \emph{cherries} containing $1,2$ and $4,5$, respectively, and $3$ attached as a leaf to an interior vertex, cf.~\Cref{fig:example-partial-ordering-3}.
        Furthermore, $E'$ has two connected components, a fully resolved tree with labels $1,2,4,5$ and isolated label $3$, cf.~\Cref{fig:example-partial-ordering-3}.
        Then $E' < E$ and $R_{12\vert 45} = \{12\vert 345, 123\vert 45\}$, $R_{cut} = \{3\vert 1245\}$ and $R_{dis} = \emptyset$.
\end{enumerate}
\end{example}

\begin{figure}[ht]
    \centering
    \includegraphics[width=0.7\linewidth]{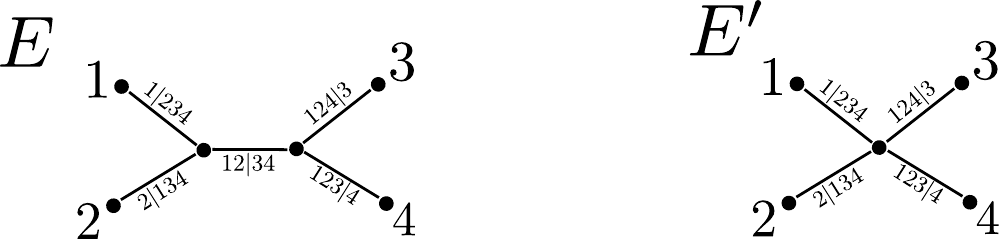}
    \caption{Wald topologies $E$ and $E'$ from \Cref{exa:partial-ordering-2-3} (2), where $E' < E$.}
    \label{fig:example-partial-ordering-2}
\end{figure}

\begin{figure}[ht]
    \centering
    \includegraphics[width=0.7\linewidth]{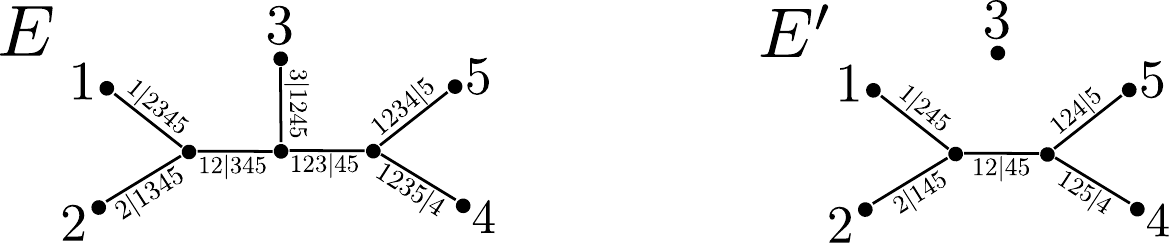}
    \caption{Wald topologies $E$ and $E'$ from \Cref{exa:partial-ordering-2-3} (3), where $E' < E$.}
    \label{fig:example-partial-ordering-3}
\end{figure}

\begin{lemma}\label{lem:groves-end-basics}
    Let $E' \leq E$ with label partitions $L_1,\ldots,L_K$ and $L'_1,\ldots,L'_{K'}$, respectively, and $u,v \in L$. 
    Then the following hold
    \begin{enumerate}[label=(\roman*)]
        \item If $K = K'$ then w.l.o.g.~$L'_\alpha = L_\alpha$ and $E'_\alpha\subseteq E_\alpha$ for all $\alpha=1,\dots,K$ and $R_{e'} = \{e'\}$ for all $e'\in E'$.
        \item $K < K' \iff R_{cut}\neq \emptyset$.
        \item If  $K=K'$ then $E'<E \iff R_{dis} \neq \emptyset$.
        %\item If $E'<E$ and $K=K'$ then $R_{dis} \neq \emptyset$
        \item $R_{e'}\neq\emptyset$ for all $e'\in E'$ and if $\,\exists e'\in E'_{\alpha'}$ with $\vert R_{e'}\vert > 1$  and $L'_{\alpha'} \subseteq L_\alpha$, then $L'_{\alpha'} \subsetneq L_\alpha$.
        \item $E = E' \iff (R_{dis}=\emptyset \text{ and } R_{cut}=\emptyset)$.
        \item $R_{e'}\cap R_{e''} = \emptyset$ for all $E'\ni e'\neq e'' \in E'$.
        \item The splits in $E\res{L'_{\alpha'}}$ are pairwise compatible.
        \item $e'\in E'(u,v)\iff R_{e'}\cap E(u,v) \neq\emptyset\iff R_{e'}\subseteq E(u,v)$.
        \item $R_{dis}, R_{cut}$ in conjunction with the $R_{e'}$ over all $e'\in E'$ give a pairwise disjoint union  of $E$, where $R_{dis}$ and $R_{cut}$ might be empty.
        \item Let $u,v\in L'_{\alpha'}$ for some $1\leq\alpha'\leq K'$.
        Then $R_{dis}\cap E(u,v)$ in conjunction with the $R_{e'}$ over all $e'\in E'(u, v)$  give a pairwise disjoint union  of $E(u,v)$, where $R_{dis}\cap E(u,v)$ might be empty.
        \item For any $L'_{\alpha'},L'_{\alpha''}\subset L_\alpha$ with $\alpha'\neq\alpha''$, there exists a split $A\vert B = e\in E$ with $L'_{\alpha'}\subseteq A$, $L'_{\alpha''}\subseteq B$ and $e\in R_{cut}$.
    \end{enumerate}
    Let $F,F'\in \cW$ with $\rho=\phi(F)$, $\rho'=\phi(F')$ and topologies $E$ and $E'$, respectively, with label partitions $L_1,\ldots,L_K$ and $L'_1,\ldots,L'_{K'}$, respectively. 
    Then the following hold
    \begin{enumerate}[label=(\roman*)]
        \setcounter{enumi}{11}
        \item If for all $u,v \in L$: $\rho_{uv} = 0 \implies \rho'_{uv} = 0$, then $L'_1,\dots,L'_{K'}$ is a refinement of $L_1,\dots,L_K$.
        \end{enumerate}
        Finally, we have the general result
        \begin{enumerate}[label=(\roman*)]
        \setcounter{enumi}{12}
\item For every \waldtop $E'$ with $\vert E'\vert < 2N-3$ there is a \waldtop $E$ with $\vert E\vert = \vert E'\vert +1 $ and $E'<E$.
    \end{enumerate}
\end{lemma}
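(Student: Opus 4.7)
The plan is to argue by two cases, depending on whether $E'$ has a single component ($K'=1$) or several ($K'\geq 2$), constructing $E$ with $|E|=|E'|+1$ explicitly in each.

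\textbf{Case $K'=1$.} Then $E'$ is a single compatible split system on all of $L$ satisfying the separation property, with $|E'|<2N-3$. Passing to a graph-theoretic representative $(\gV,\gE)$ of $E'$, a standard degree count shows that since $|\gE|=|E'|<2N-3$, one cannot simultaneously have every internal vertex of degree $3$ and every labelled vertex of degree $1$; hence either some internal vertex has degree $\geq 4$, or some labelled vertex has degree $\geq 2$. Performing a standard vertex resolution at such a vertex $v$ (partitioning its neighbours into two groups and inserting a new internal vertex $w$ joined to $v$ by a fresh edge) adds exactly one edge, leaves every previous split unchanged, and contributes one new split $s_{\text{new}}$ corresponding to $\{v,w\}$. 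Setting $E:=E'\cup\{s_{\text{new}}\}$ gives $|E|=|E'|+1$, and $E'<E$ is immediate because the leaf partition is unchanged and the three ordering conditions become trivial.

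\textbf{Case $K'\geq 2$.} Pick two distinct components $L'_1, L'_2$ of $E'$ and leaves $u\in L'_1$, $v\in L'_2$. Merge them into a single component $L_1:=L'_1\cup L'_2$ by (i) extending every split $A|B\in E'_1$ to the split of $L_1$ obtained by appending $L'_2$ to whichever side contains $u$, (ii) extending every split $C|D\in E'_2$ symmetrically by appending $L'_1$ to the side containing $v$, and (iii) adjoining the new split $L'_1|L'_2$. Let $E$ be the union of these $|E'_1|+|E'_2|+1$ splits together with the unchanged components $E'_{\alpha'}$ for $\alpha'\geq 3$. Then $|E|=|E'|+1$, and $E'<E$ follows since the partition $\{L'_1,L'_2,L'_3,\ldots\}$ refines $\{L_1, L'_3, \ldots\}$, restriction of each extended split back to the relevant $L'_{\alpha'}$ recovers the original, and the new split $L'_1|L'_2$ witnesses the cut condition.

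The main technical verification, and the step I expect to be the main obstacle, is checking that $E$ is a valid wald topology in Case~2: specifically that the splits constructed for $L_1$ are pairwise compatible and separate every pair of labels. Separation is immediate (within each $L'_i$ by the extension of the original separating split from $E'_i$, across the two components by $L'_1|L'_2$). Compatibility follows either by a short case analysis on the four intersections defining compatibility (appending the disjoint sets $L'_1$ and $L'_2$ consistently relative to $u$ and $v$ preserves the emptiness of at least one intersection), or more cleanly by noting that the construction is precisely the split system of the graph-theoretic forest obtained by joining the trees representing $E'_1$ and $E'_2$ via the single new edge $\{u,v\}$; this is a valid forest because \ref{item:pf1} permits labelled vertices $u$ and $v$ to have degree $2$, after which \Cref{thm:definitions-of-phylo-forest-equivalent} yields the desired compatibility.
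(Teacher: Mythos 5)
Your overall strategy is the same as the paper's: split into the cases $K'=1$ and $K'\geq 2$, and construct $E$ by adding exactly one new split. In Case $K'\geq 2$ your construction (join $L'_1$ and $L'_2$, extend each split of $E'_1$ and $E'_2$ across, adjoin $L'_1\vert L'_2$) is essentially verbatim what the paper does, and your justification via the corresponding graph operation (adding a single edge $\{u,v\}$ between two labelled leaves, observing that \ref{item:pf1} allows labelled vertices of degree $2$) is a clean way to get compatibility for free. In Case $K'=1$ the paper works directly with the split system, asserting a partition $A_1,\dots,A_k$ of $L$ with each $A_i\vert L\setminus A_i\in E'$ and then adjoining $A_1\cup A_2\vert L\setminus(A_1\cup A_2)$, whereas you pass to a graph-theoretic representative and perform a vertex resolution. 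Your formulation is actually somewhat more careful here: as you correctly observe, when $|E'|<2N-3$ the unresolved vertex may be a \emph{labelled} vertex of degree $\geq 2$ rather than an unlabelled vertex of degree $\geq 4$, in which case the leaf-set blocks around it partition $L\setminus\{u\}$ rather than $L$, so the paper's phrasing does not literally apply (though the paper's construction of adding $A_1\cup A_2\vert L\setminus(A_1\cup A_2)$ can still be salvaged). Your graph-theoretic resolution handles both sub-cases uniformly.

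One small technical refinement you should make: in the sub-case where the unresolved vertex $v$ is \emph{labelled} and has degree exactly $2$, "partitioning its neighbours into two nonempty groups" would leave the new internal vertex $w$ with degree $2$, violating \ref{item:pf1}. The correct move there is to move \emph{all} neighbours of $v$ to $w$ (so $w$ gets degree $3$, and $v$ becomes a labelled leaf of degree $1$ attached to $w$); equivalently, the added split is $\{v\}\vert L\setminus\{v\}$. More generally, when $v$ is labelled of degree $k\geq 2$, the partition of neighbours should place at least two neighbours on the $w$-side (so $\deg(w)\geq 3$), while the $v$-side is allowed to be empty or a singleton since $v$ is labelled; when $v$ is unlabelled of degree $k\geq 4$, both sides must have at least two neighbours. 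With that caveat spelled out, the resolution argument is correct and the verification of $E'<E$ (trivial refinement, restriction, and cut conditions in Case $K'=1$) goes through as you state.
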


\begin{proof} Since Assertion (ii), which, among others, implies Assertion (v),  follows from  Assertion (xi) we proceed in the following logical order.

\textit{(i)}:
        $K=K'$ implies w.l.o.g.~$L_\alpha = L'_\alpha$ for all $\alpha=1,\dots,K$ and therefore $e\res{L'_\alpha} = e\res{L_\alpha} = e$ are valid splits for all $e\in E_\alpha$ for all $\alpha=1,\dots,K$, so $E'_\alpha\subseteq E_\alpha$ as well as $R_{e'} = \{e'\}$ for all $e'\in E'$.

\textit{(iii)}:
        From \textit{(i)} w.l.o.g.~$L_\alpha=L'_\alpha$ and $E'_\alpha\subseteq E_\alpha$, $\alpha=1,\dots,K$.
        Thus $R_{dis} = \emptyset \iff(\text{for all } \alpha=1,\dots,K,~E'_\alpha = E_\alpha)\iff E' = E$.

\textit{(iv)}:
        By the restriction property of $E'\leq E$, each $e'\in E'_{\alpha'}$ is the restriction of some $e\in E_\alpha$, thus $e\in R_{e'} \neq\emptyset$.
        Assume that there exist $e_1,e_2\in R_{e'}$ with $e_1\neq e_2$. If $L'_{\alpha'} = L_\alpha$ was true, then $e_1 = e_1\res{L'_{\alpha'}} = e_2\res{L'_{\alpha'}} = e_2$, a contradiction.

\textit{(vi)}:
        Assume the contrary: let $A\vert B = e\in R_{e'}\cap R_{e''}$, where $e'\in L'_{\alpha'}\subset L_\alpha$ and $e''\in L'_{\alpha''}\subset L_\alpha$.\\
        If $\alpha' = \alpha''$, then $e' = e\res{L'_{\alpha'}} = e''$, a contradiction to $e'\neq e''$, so $\alpha'\neq\alpha''$.
        Since $e$ is in both $R_{e'}$ and $R_{e''}$, both restrictions to $L'_{\alpha'}$ and $L'_{\alpha''}$ exist and therefore
        \begin{equation*}
            A\cap L'_{\alpha'}\neq\emptyset,\quad B\cap L'_{\alpha'}\neq\emptyset,\quad A\cap L'_{\alpha''}\neq\emptyset,\quad B\cap L'_{\alpha''}\neq\emptyset.
        \end{equation*}
        Due to $E' \leq E$, by the cut property there exists $C\vert D=\tilde{e}\in E_\alpha$ separating $L'_{\alpha'}$ and $L'_{\alpha''}$, i.e. $L'_{\alpha'}\subseteq C$ and $L'_{\alpha''}\subseteq D$.
        But then $\tilde{e}, e\in E_\alpha$ cannot be compatible, a contradiction.

\textit{(vii)}:
        Let $L'_{\alpha'} \subset L_\alpha$ and $e',e''\in E\res{L'_{\alpha'}}$ such that $e' = e\res{L'_{\alpha'}}$ and $e'' = e^\circ\res{L'_{\alpha'}}$ with $e=A\vert B\in E$ and $e^\circ=A^\circ\vert B^\circ\in E$.
        Then $e,e^\circ\in E_\alpha$ for otherwise their restrictions to $L'_{\alpha'}$ would not be valid splits.
        Since $e$ and $e^\circ$ are compatible, w.l.o.g.~$A\cap A^\circ = \emptyset$.
        Consequently, $e' = (A\cap L'_{\alpha'})\vert(B\cap L'_{\alpha'})$ and $e'' = (A^\circ\cap L'_{\alpha'})\vert(B^\circ\cap L'_{\alpha'})$ are compatible as $(A\cap L'_{\alpha'})\cap (A^\circ\cap L'_{\alpha'}) = \emptyset$.

\textit{(viii)}:
        We show $e'\in E'(u,v)\implies R_{e'}\subseteq E(u,v)\implies R_{e'}\cap E(u,v)\neq\emptyset\implies e'\in E'(u,v)$.\\
        If $e'\in E'(u,v)$, then due to \textit{(iv)}, $R_{e'}\neq\emptyset$. Hence $e'\coloneqq e\vert_{L'_{\alpha'}} = (A\cap L'_{\alpha'})\vert(B\cap L'_{\alpha'})$ for some $e = A\vert B\in R_{e'}$, and thus $u\in A$, $v\in B$, or vice versa, i.e. $e\in E(u,v)$. 
        Since the choice $e\in R_{e'}$ was arbitrary, $R_{e'}\subseteq E(u,v)$.\newline
        If $e\in R_{e'}\cap E(u,v)$, $u,v\in L'_{\alpha'}$, then $e'= e\vert_{L'_{\alpha'}}$ and $e'\in E'(u,v)$ due to \Cref{eq:splits-on-path-equiv}.

\textit{(ix)}:
        By definition of $R_{dis}$ and $R_{cut}$, they are disjoint and furthermore have empty intersection with each $R_{e'}$, $e'\in E'$ and the latter are pair-wise disjoint due to \textit{(vi)}.

\textit{(x)}:
        By definition, $R_{cut}\cap E(u,v)$ for all $u,v\in L'_{\alpha'}$ (else $R_{cut}$ would contain valid splits).
        Then \textit{(ix)} in conjunction with \textit{(viii)} yields the assertion.

\textit{(xi)}:
        Without loss of generality, let $K = 1< K'$ and suppose that $\alpha'=1$, $\alpha''=2$.
        \\
        In the first step note that it suffices to find a split $e=A\vert B$ that separates $L'_{1}$ from $L'_{\alpha'}$ for all $2\leq \alpha'\leq K'$ for then, w.l.o.g. $L'_1 \subseteq A$, $L'_2,\ldots L'_{K'} \subset B$, which implies $L'_1 =A$, $L'_2\cup \ldots\cup L'_{K'} = B$, so that none of the $e\res{L'_1},\ldots e\res{L'_{K'}}$ is a valid split and in consequence $e \in R_{cut}$ as desired.\\
        In the second step we show the existence of such a $e$. In fact, to this end, it suffices to establish the following claim for all $3\leq J \leq K'$, invoke induction and separately show the assertion for $K'=2$.
        \begin{quote}Claim: 
            If $\exists$ split $f=C\vert D$ separating $L'_1$ from all of $L'_1,\ldots,L'_{J-1}$, i.e. w.l.o.g. $L'_1\subseteq C,~L'_2,\ldots,L'_{J-1}\subset D$, that has the property $C\cap L'_{J} \neq \emptyset \neq D\cap L'_{J}$ then $\forall$ compatible splits $e=A\vert B$  separating $L'_1$ from  $L'_{J}$ where, w.l.o.g. $L'_1\subseteq A, L'_{J}\subseteq B$ we have that $e$  separates $L'_1$ from all of $L'_1,\ldots,L'_{J}$, i.e. equivalently 
            $$ L'_{\alpha'}\subset B~\forall 2\leq \alpha'\leq J\,.$$
        \end{quote}
        Indeed, if $K'=2$ and $e=A\vert B$ separates $L'_1$ from $L'_2$ then, w.l.o.g., $A=L'_1$ and $B=L'_2$.\\
        In the third step we show the claim. To this end let $K'\geq 3$, $3\leq J \leq K'$, $f=C\vert D$ as in the claim's hypothesis and suppose that $e=A\vert B$ is an arbitrary compatible split with $L'_1 \subseteq A$, $L'_J \subseteq B$. Then
        $$ C\cap A \supseteq L'_1 \neq \emptyset,~ C \cap B \supseteq  C \cap L'_J \neq \emptyset,~D \cap B \supseteq  D \cap L'_J \neq \emptyset\,.$$
        By compatibility of splits we have thus $\emptyset = D\cap A\supseteq  L'_{\alpha'}\cap A$ for all $2\leq \alpha' \leq J$ by hypothesis, yielding
        $$ L'_{\alpha'}\subset B~\forall 2\leq \alpha'\leq J\,,$$
        thus establishing the claim.        
        
\textit{(ii)}: We show equivalently $K=K'\Leftrightarrow R_{cut}=\emptyset$. 
``$\Rightarrow$'': If $K=K'$, then by \textit{(i)} w.l.o.g.~$L'_\alpha = L_\alpha$ and in particular $e\res{L'_\alpha} = e\res{L_\alpha} = e$ are valid splits for all $e\in E_\alpha$, $\alpha=1,\dots,K$, so that $R_{cut} = \emptyset$.
``$\Leftarrow$'' follows at once from \textit{(xi)}. 

\textit{(v)}: ``$\Rightarrow$'': Trivial. ``$\Leftarrow$'': $R_{cut} = \emptyset \implies K = K'$ due to \textit{(ii)} and thus $R_{dis} = \emptyset\implies E = E'$ due to \textit{(iii)}.

\textit{(xii)}: 
        Let $1 \leq \alpha'\leq K'$ and $u\in L'_{\alpha'}$. Then, there is $1 \leq \alpha\leq K$ such that $u\in L_\alpha$. For any other $v\in L'_{\alpha'}$, $\rho'_{uv}>0$, so by assumption $\rho_{uv}>0$, thus $v\in L_\alpha$, yielding $L'_{\alpha'}\subseteq L_\alpha$.
        
\textit{(xiii)}: Suppose that $F'$ is  a \wald 
 with leaf partition $L'_1,\ldots, L'_{K'}$ and $\vert E'\vert < 2N-3$. 
 \\
 In case of $K'=1$ there is a vertex of degree $k\geq 4$, i.e. there is a partition $A_1,\ldots,A_k$ of $L=L'_1$ with splits
 $$ A_i\vert L\setminus A_i \in E',~1\leq i\leq k$$
 and all other splits in $E'$ are of form 
 $$ A'_i\vert L\setminus A'_i \in E',~1\leq i\leq k$$
 where $A'_i$ is a suitable subset of $A_i$.  Then one verifies at once that the new split $e:=A_1\cup A_2 \vert L\setminus (A_1\cup A_2)$ is compatible with all splits in $E'$ so that 
 $E := E' \cup \{e\}$ is a \waldtop with the desired properties $\vert E\vert = \vert E'\vert +1$ and $E'<E$. For the latter note that $R_{e'} = \{e'\}$ for all $e' \in E'$, $R_{cut} =\emptyset$ and $R_{dis}= \{e\}$.
 \\
In case of $K'\geq 2$ introduce the new split $f:=L'_1\vert L'_2$ and for every $e'_1=A\vert B \in E'_1$ let $e(e'_1): = A\vert B\cup L'_2$, so that $e(e'_1)\res{L'_1} =e'_1$. Similarly, for every $e'_2=C\vert D \in E'_2$ let $e(e'_2): = C\vert D\cup L'_1$, so that $e(e'_2)\res{L'_2} =e'_2$. Setting
$$ E := \{e(e'): e' \in E'_1\cup E'_2\} \cup \{f\}  \cup E'_3 \ldots \cup E'_{K'}$$
one verifies that all splits in $E$ are pairwise compatible. Hence $E$ is a \waldtop with  $\vert E\vert = \vert E'\vert +1$ and $E'<E$. Indeed, for the latter note that $R_{e'} = \{e(e')\}$ for all $e' \in E'_1\cup E'_2$, $R_{e'} = \{e'\}$ for all $e' \in E'_3\cup\ldots\cup E'_{K'}$, $R_{cut} = \{f\}$ and $R_{dis}=\emptyset$.
\end{proof}

In the following theorem, we characterize the boundaries of \groves via the partial ordering on \waldtops.

\begin{theorem}\label{thm:boundary-of-groves}
    For \wald topologies $E$ and $E'$, the following three statements are equivalent (with $\dd\G{E}$ as in \Cref{eq:boundary-of-grove}):
    \begin{enumerate}[label={(\roman*)}]\itemsep0.3em
      \item $E' < E$,
      \item $\G{E'} \subset \dd\G{E}$,
      \item $\G{E'}\cap \dd\G{E} \neq\emptyset$.
    \end{enumerate}
\end{theorem}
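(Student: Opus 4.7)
The plan is to prove the cycle (ii) $\Rightarrow$ (iii) $\Rightarrow$ (i) $\Rightarrow$ (ii). The implication (ii) $\Rightarrow$ (iii) is immediate since every grove $\G{E'}$ is nonempty (being homeomorphic to $(0,1)^{E'}$, or a singleton when $E'=\emptyset$).

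For (i) $\Rightarrow$ (ii), given any $F' = (E',\lambda')\in\G{E'}$, the aim is to produce $\lambda^*\in\dd([0,1]^E)$ with $\bar\phi_E(\lambda^*) = \phi(F')$, whence $F'\in\dd\G{E}$ by \Cref{eq:boundary-of-grove}. Using the disjoint decomposition $E = R_{cut}\sqcup R_{dis}\sqcup\bigsqcup_{e'\in E'}R_{e'}$ from \Cref{lem:groves-end-basics}(ix), I fix a representative $e_0(e')\in R_{e'}$ for each $e'\in E'$ (nonempty by (iv)) and set
\[
\lambda^*_e \coloneqq \begin{cases} 1, & e\in R_{cut},\\ 0, & e\in R_{dis},\\ \lambda'_{e'}, & e=e_0(e'),\\ 0, & e\in R_{e'}\setminus\{e_0(e')\}. \end{cases}
\]
Since $E'<E$ implies $E\neq E'$, \Cref{lem:groves-end-basics}(v) forces $R_{cut}\cup R_{dis}\neq\emptyset$, so $\lambda^*\in\dd([0,1]^E)$. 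The equality $\bar\phi_E(\lambda^*) = \phi(F')$ is then verified entrywise in three cases: for $u,v$ in a common $L'_{\alpha'}$ the path decomposition in \Cref{lem:groves-end-basics}(x) gives trivial $R_{dis}$-factors and $R_{e'}$-blocks evaluating to $(1-\lambda'_{e'})$; for $u,v$ in distinct $L'$-components of the same $L_\alpha$ the edge in $R_{cut}\cap E(u,v)$ supplied by \Cref{lem:groves-end-basics}(xi) contributes a factor $0$; and for $u,v$ in different $L_\alpha$ both sides vanish by \Cref{eq:phi-map-agreement}.

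For (iii) $\Rightarrow$ (i), given $F'\in\G{E'}\cap\dd\G{E}$, \Cref{eq:boundary-of-grove} supplies $\lambda^*\in\dd([0,1]^E)$ with $\bar\phi_E(\lambda^*) = \phi(F')\in\spd$. First, the topology of $F'$ must differ from $E$: if some $\lambda^*_e = 1$, the matrix acquires zero off-diagonal entries incompatible with any wald of topology $E$; if instead all nonboundary $\lambda^*$-values lie in $(0,1)$ and some $\lambda^*_e = 0$, the matrix coincides with $\bar\phi_{E\setminus\{e\}}$ applied to the restricted weights, hence corresponds by uniqueness of $\phi$ to a wald of strictly smaller topology. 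Setting $E_1 \coloneqq \{e\in E : \lambda^*_e = 1\}$, the key observation is that for $u,v$ in the same $L_\alpha$, $\phi(F')_{uv}=0$ iff $E_1\cap E(u,v)\neq\emptyset$. The \emph{refinement} property then follows from \Cref{lem:groves-end-basics}(xii), since $E(u,v)=\emptyset$ forces $\phi(F')_{uv}=0$ whenever $u,v$ lie in different $L_\alpha$. For the \emph{cut} property, given $L'_{\alpha'},L'_{\alpha''}\subseteq L_\alpha$ with $\alpha'\neq\alpha''$ and $u\in L'_{\alpha'},v\in L'_{\alpha''}$, pick any $e=A\vert B\in E_1\cap E(u,v)$ with $u\in A, v\in B$; if some $\tilde u\in L'_{\alpha'}\cap B$ existed, the $u$-to-$\tilde u$ path in $E$ would cross $e\in E_1$, giving $\phi(F')_{u\tilde u}=0$ and contradicting $u,\tilde u\in L'_{\alpha'}$; hence $L'_{\alpha'}\subseteq A$ and symmetrically $L'_{\alpha''}\subseteq B$.

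The main obstacle is the \emph{restriction} property: each $e'\in E'_{\alpha'}$ must be the restriction to $L'_{\alpha'}$ of some $e\in E$. The plan is to consider the minimal subtree of a graph-theoretic representative of $E_\alpha$ spanning $L'_{\alpha'}$, equipped with the lengths $\ell^*_e = -\log(1-\lambda^*_e)\in[0,\infty)$; no $E_1$-edges appear on this subtree by the equivalence above. This subtree realizes the tree metric $-\log\phi(F')\res{L'_{\alpha'}}$, and after contracting any zero-length segments the Buneman-type uniqueness of tree-metric representations (as exploited in the proof of \Cref{lem:grove-topology-eucl-phi-is-embedding}(i)) identifies its split set with $E'_{\alpha'}$. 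Thus every $e'\in E'_{\alpha'}$ is the restriction to $L'_{\alpha'}$ of an edge of $E_\alpha$, completing the verification of $E'\leq E$ and hence, together with $E\neq E'$, of $E'<E$.
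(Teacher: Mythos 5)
Your proof is correct, and the three implications you chain together match those in the paper's proof cycle, but you take a genuinely different route for the two nontrivial implications.

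For \textnormal{(i)}$\Rightarrow$\textnormal{(ii)} you build $\lambda^*\in\dd([0,1]^E)$ by concentrating the boundary weight on one chosen representative $e_0(e')\in R_{e'}$ (setting $\lambda^*_{e_0(e')}=\lambda'_{e'}$, zero on the rest of $R_{e'}$), whereas the paper spreads it evenly as $\lambda^*_e = 1-(1-\lambda'_{e'})^{1/|R_{e'}|}$ for all $e\in R_{e'}$. Both give $\prod_{e\in R_{e'}}(1-\lambda^*_e)=1-\lambda'_{e'}$, and the entrywise check via \Cref{lem:groves-end-basics}~(viii)--(xi) goes through identically, so this is a cosmetic difference.

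The real divergence is in \textnormal{(iii)}$\Rightarrow$\textnormal{(i)}. You verify the three defining properties (refinement, cut, restriction) of $E'\leq E$ directly from the boundary data $\lambda^*$ with $E_1=\{e:\lambda^*_e=1\}$, using a Buneman-type uniqueness argument for the restriction property: form the minimal spanning subtree of $L'_{\alpha'}$ inside a representative of $E_\alpha$, observe it contains no $E_1$-edge, contract zero-length edges and suppress degree-two vertices, and appeal to uniqueness of tree-metric realizations to identify the resulting split system with $E'_{\alpha'}$. The paper takes the opposite tack: it \emph{defines} a candidate topology $E^\circ$ by restricting all splits $e\in E$ with $\lambda^*_e\neq 0$ to the connectivity classes $L^\circ_{\alpha^\circ}$, so the restriction property holds by construction, then verifies the cut property, and finally shows $\phi(F^\circ)=\phi(F')$ for a concretely defined $\lambda^\circ$, concluding $E^\circ=E'$ by injectivity of $\phi$. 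The paper's route stays entirely within the algebraic split/weight calculus and avoids the graph contraction/suppression step; your route is shorter in spirit but your restriction argument implicitly uses degree-two vertex suppression in addition to zero-length contraction, and the fact that every surviving split of the suppressed tree equals $e\res{L'_{\alpha'}}$ for some $e\in E_\alpha$ with $\lambda^*_e>0$ deserves to be spelled out (this is in essence the paper's definition of $E^\circ_{\alpha^\circ}$ in \Cref{eq:groves-end-proof-restrict}). Your argument that $E'\neq E$ also has a slightly hand-waved second case (``coincides with $\bar\phi_{E\setminus\{e\}}$, hence a wald of strictly smaller topology''); it is true, but one should note that $\bar\phi_E(\lambda^*)\in\spd$ forces $E(u,v)\neq\{e\}$ for all $u\neq v$, so that $E\setminus\{e\}$ still satisfies condition (iv) of \Cref{def:wald-split-representation}.
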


\begin{proof}
    Let $E$ have label partition $L_1,\dots,L_K$.
    
    ``$(i)\implies(ii)$''.
    Assume that $F' = (E',\lambda')\in\G{E'}$ with partition $L_1',\dots,L_{K'}'$.
    Using Lemma~\ref{lem:groves-end-basics}, (ix), set
        \begin{equation*}
            \lambda^*_e \coloneqq  \left\{\begin{array}{cl}
                 0 & e\in R_{dis} \\
                 1 & e \in R_{cut}\\
                  1 - (1-\lambda'_{e'})^{1/\vert R_{e'}\vert }\quad & e\in R_{e'}, e' \in E' 
            \end{array}\right.
        \end{equation*}
    to obtain $\lambda^* \in \partial([0,1]^E)$ since $R_{cut}\cup R_{dis} \neq \emptyset$ due to $E' < E$ by Lemma~\ref{lem:groves-end-basics}, (v). 
    By injectivity of $\phi$, it suffices to show $(*)$:
    \begin{equation*}
        \bar\phi_E(\lambda^*) \eqqcolon (\rho_{uv}^*)_{u,v=1}^N   \overset{(*)}{=}  (\rho_{uv}')_{u,v=1}^N \coloneqq \phi(F')\,.
    \end{equation*}
    First, observe by Agreement (\ref{eq:phi-map-agreement}) that for all $u \in L$,
    \begin{equation*}
        \rho^*_{uu} = 1 = \rho'_{uu} \,.
    \end{equation*}
    Next, again from Agreement (\ref{eq:phi-map-agreement}), for all $u,v\in L$ with $u\neq v$ that are not connected in $F'$, say $u\in L'_{\alpha'_1}$, $v\in L'_{\alpha_2'}$ for some $\alpha'_1,\alpha'_2\in\{1,\dots,K'\}$, we have $\rho_{uv}' = 0$. 
    If $u$ and $v$ are also not connected in $E$, then $\rho_{uv}^* = 0 = \rho_{uv}'$.
    Assume now that $u$ and $v$ are connected in $E$. 
    Then, by \Cref{lem:groves-end-basics}, (xi), there exists an edge $A\vert B = e\in R_{cut}$ with $u\in A$ and $v\in B$, and due to $\lambda^*_e = 1$ by construction, $\rho^*_{uv} = 0 = \rho'_{uv}$.
    
    Finally, for all $u,v \in L$ that are connected in $F'$, we have, due to construction and Lemma \ref{lem:groves-end-basics}, (x),
    \begin{eqnarray*}
    \rho^*_{uv} &=&  \prod_{e\in E(u,v)} (1-\lambda^*_e)\\
    &=& \bigg(\prod_{e\in R_{dis}\cap E(u,v)}(1 - \lambda^*_e) \bigg)\bigg(\prod_{e'\in E'(u,v)} \prod_{e \in R_{e'}}(1-\lambda'_{e'})^{1/\vert R_{e'}\vert }\bigg)\\
    &=& \prod_{e'\in E'(u,v)} (1-\lambda'_{e'}) ~=~\rho'_{uv}\,.
    \end{eqnarray*}
    Thus, we have shown $\phi(F') = \bar\phi_E(\lambda^*)$. As $F'=(E',\lambda')$ was arbitrary, we have shown $\G{E'} \subset \dd\G{E}$ where equality cannot be due to $\lambda^* \in \partial ([0,1]^E)$.
    
    ``$(ii)\implies(iii)$'' is trivial.
    
    ``$(iii)\implies(i)$''.
    Let $F'=(E',\lambda')\in\G{E'}\cap\dd\G{E}$, i.e.~there exists $\lambda^*\in\partial([0,1]^E)$ with $\bar\phi_E(\lambda^*)=\phi(F')\in\spd$.
    In the following, we will construct $F^\circ = (E^\circ,\lambda^\circ)$ with $\lambda^\circ \in (0,1)^{E^\circ}$ and show that 
    \begin{description}
        \item[Claim I:] $E^\circ < E$, and
        \item[Claim II:] $\phi(F^\circ) = \phi(F')$.
    \end{description}
    As Claim II implies $F^\circ = F'$ and $E^\circ = E'$, in conjunction with Claim I we then obtain the assertion $E' < E$.

    In order to see Claim I, let $\bar\phi_E(\lambda^*) = (\rho_{uv}^*)_{u,v=1}^N$. Denote the connectivity classes of $L$, where $u,v \in L$ are connected if and only if $\rho^*_{uv} > 0$, by
    \begin{equation*}
        L^\circ_{1},\ldots, L^\circ_{K^\circ},
    \end{equation*}
    with $1\leq K^\circ\leq N$.
    Since $\rho^*_{uv} > 0$ implies $\rho_{uv} > 0$, we have that $L^\circ_{1},\ldots, L^\circ_{K^\circ}$ is a \emph{refinement} of $L_1,\dots,L_K$ by \Cref{lem:groves-end-basics}, (xii).
    
    Define $E^\circ$ by setting for each $1\leq \alpha^\circ \leq K^\circ$ (where, say, $L^\circ_{\alpha^\circ}\subset L_\alpha$ for some $1\leq\alpha\leq K$)
    \begin{equation}\label{eq:groves-end-proof-restrict}
        E^\circ_{\alpha^\circ} \coloneqq \Big\{e\res{L^\circ_{\alpha^\circ}}\colon \exists e\in E \st e\res{L^\circ_{\alpha^\circ}} \text{ is a valid split and } \lambda^*_e \neq 0 \Big\},
    \end{equation}
    and $E^\circ \coloneqq \bigcup_{\alpha^\circ}E^\circ_{\alpha^\circ}$.
    By \Cref{lem:groves-end-basics} (vii), each $E^\circ_{\alpha^\circ}$ comprises compatible splits only 
    so that $E^\circ$ satisfies 
    the \emph{restriction property} from \Cref{def:partial-ordering-on-forest-topologies}.

    Verifying the cut property, suppose there exist $1\leq\alpha_1^\circ\neq\alpha_2^\circ\leq K^\circ$ and $1\leq \alpha\leq K$ such that $L^\circ_{\alpha_1^\circ},L^\circ_{\alpha_2^\circ}\subset L_\alpha$.
    Hence by construction 
    \begin{equation}\label{eq:groves-end-proof-cut}
    \rho_{us}^* = 0,~ \rho_{uv}^* > 0\mbox{ and } \rho_{st}^* > 0\mbox{  for all }u,v\in L^\circ_{\alpha_1}\mbox{ and }s,t\in L^\circ_{\alpha_2}\,.    
    \end{equation}
    Let now $u\in L^\circ_{\alpha_1^\circ}$ and $s\in L^\circ_{\alpha_2^\circ}$, then by definition of $\bar\phi_E$, $\rho_{us}^* =\prod_{e\in E(u,s)}(1 - \lambda_e^*)= 0$, so there must exist $e =A\vert B\in E(u,s)$ with $\lambda_e^* = 1$.
    This implies $L^\circ_{\alpha_1^\circ}\subseteq A$ and $L^\circ_{\alpha_2^\circ} \subseteq B$, for otherwise, if $A\not \ni v\in L^\circ_{\alpha_1^\circ}$, say,
    then $v\in B$ and hence $e \in E(u,v)$ due to \Cref{eq:splits-on-path-equiv} and hence $\rho^*_{uv}=0$, due to $\lambda^*_e=1$, a contradiction to \Cref{eq:groves-end-proof-cut}.
    Thus the \emph{cut property} holds.
    
    Having verified all of the properties from \Cref{def:partial-ordering-on-forest-topologies}, we have shown $E^\circ \leq E$, and we can use the notation introduced in \Cref{rmk:cut-disappear} and Lemma \ref{lem:groves-end-basics} is applicable for $E^\circ \leq E$.
    Since $\lambda^*$ is on the boundary, there must be some $e\in E$ with either $\lambda^*_e = 1 > \lambda_e > 0$ or all $\lambda^*_e < 1$ and there is $\lambda^*_e= 0 <\lambda_e$. In the first case, $e\in R_{cut}$, in the second case $e \in R_{dis}$, so that in both cases $E^\circ \neq E$ by \Cref{lem:groves-end-basics}, (v), yielding $E^\circ < E$, which was Claim I.
    
    In order to see Claim II we define suitable edge weights $\lambda^\circ$.
    Let $1\leq\alpha^\circ\leq K^\circ$ be arbitrary and let $1\leq\alpha\leq K$ be such that $L^\circ_{\alpha^\circ}\subseteq L_\alpha$.
    For each $e^\circ\in E^\circ_{\alpha^\circ}$, define
    \begin{equation}\label{eq:groves-end-weights}
        \lambda^\circ_{e^\circ} \coloneqq 1- \prod_{e\in R_{e^\circ}} (1-\lambda^*_e)\,.
    \end{equation}
    Indeed, $\lambda^\circ_{e^\circ}\in(0,1)$, since by \Cref{lem:groves-end-basics} (ix), none of the $e\in R_{e^\circ}
    $ lie in $R_{cut}$, we have  $\lambda^*_e< 1$, and, since at least for one $e\in  R_{e^\circ}$, we have $\lambda^*_e > 0$ by \Cref{{eq:groves-end-proof-restrict}}.
    Thus $F^\circ \coloneqq (E^\circ,\lambda^\circ)$ is a well defined \wald. 
    
    We now show the final part of Claim II, namely that $\phi(F') = \phi(F^\circ)$. 
    Recall that $\phi(F') = \bar\phi_E(\lambda^*) = (\rho_{uv}^*)_{u,v=1}^N$ and let $\phi(F^\circ) = (\rho^\circ_{uv})_{u,v=1}^N$.
    By Agreement (\ref{eq:phi-map-agreement}), for all $u\in L$ we have $\rho^*_{uu} = 1 = \rho^\circ_{uu}$ and by definition of the connectivity classes $L^\circ_1,\dots,L^\circ_{K^\circ}$ we have $\rho^*_{uv} = 0$ if and only if $\rho^\circ_{uv}=0$ for all $u,v\in L$.
    
    For all other $u,v \in L$, we may assume that $u,v \in L^\circ_{\alpha^\circ}$ with $L^\circ_{\alpha^\circ}\subseteq L_\alpha$ for some $1\leq\alpha^\circ\leq K^\circ$ and $1\leq\alpha\leq K$.
    By \Cref{lem:groves-end-basics} (viii) and (ix), the sets $R_{dis}\cap E(u,v)$ in conjunction with $R_{e^\circ}$ for all $e^\circ\in E^\circ(u,v)$ form a partition of $E(u,v)$.
    For the first set we have
    \begin{equation}\label{eq:groves-end-dis}
        e\in R_{dis}\cap E(u,v) \Rightarrow \lambda^*_e=0\,.
    \end{equation}
    Indeed, if $e\in R_{dis}\cap E(u,v)$ then the restriction $e^\circ \coloneqq e\res{L^\circ_{\alpha^\circ}}$ is a valid split as it splits $L^\circ_{\alpha^\circ}$ into two non empty sets. But as $e \in R_{dis}$ this split does not exist in $E^\circ$ which, taking into account \Cref{eq:groves-end-proof-restrict}, % as well as
    is only possible for $\lambda^*_e = 0$.

    In consequence, we have (the first and the last equality are the definitions, respectively, the second uses that $R_{dis}\cap E(u,v)$ and $R_{e^\circ}$, $e^\circ\in E^\circ(u,v)$ partition $E(u,v)$ and the third uses for the first factor  (\ref{eq:groves-end-dis})  and (\ref{eq:groves-end-weights}) for the second factor)
    \begin{align*}
        \rho^*_{uv} 
        &=
        %\overset{\text{def}}&{=} 
        \prod_{e\in E(u,v)} (1-\lambda^*_e)\\
        %\overset{(*)}&{=}
        &=\bigg(\underbrace{\prod_{e\in R_{dis}\cap E(u,v)} (1 - \lambda^*_e)}_{=1}\bigg)\bigg(\prod_{e^\circ\in E^\circ(u,v)} \underbrace{\prod_{e\in R_{e^\circ}} (1-\lambda^*_e)}_{= 1 - \lambda^\circ_{e^\circ}}\bigg)\\
        &= \prod_{e^\circ\in E^\circ(u,v)} (1-\lambda^\circ_{e^\circ}) \\
        &= \rho^\circ_{uv},
    \end{align*}
    completing the proof.
\end{proof}

From the above theorem and its proof, we collect at once the following key relationships.

\begin{corollary}\label{cor:closure-of-grove-consists-of-other-groves}
    Let $F\in\F$ with topology $E$. Then
    \begin{equation*}
      \dd\G{E} = \bigsqcup_{E' < E} \G{E'}.
    \end{equation*}
   Further for $F' \in \F$ with topology $E'<E$,  $\phi_{E'}(\lambda') = \phi(F') = \bar \phi_{E}(\lambda^*)$ for $\lambda' \in (0,1)^{E'}$ and $\lambda^*\in\dd([0,1]^E)$,
    the following hold:
    \begin{enumerate}[start=1]
        \item 
        for each $1\leq \alpha'\leq K'$, we have that 
        \begin{equation*} 
            E'_{\alpha'} = \Big\{e\res{L'_{\alpha'}} \colon e\in E \st e\res{L'_{\alpha'}} \text{ is a valid split and } \lambda^*_e \neq 0 \Big\},
        \end{equation*}
        \item 
        for any $e'\in E'$,
        \begin{equation*}
            \lambda'_{e'} = 1 - \bigcup_{e\in R_{e'}} (1- \lambda_e^*)\,.
        \end{equation*}
    \end{enumerate}
\end{corollary}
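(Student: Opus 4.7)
The plan is to show that all three assertions follow almost entirely from Theorem \ref{thm:boundary-of-groves} together with the constructions buried in its proof, so essentially no new analytic work is needed beyond careful bookkeeping.

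For the first identity $\dd\G{E} = \bigsqcup_{E'<E}\G{E'}$, I would derive $\supseteq$ as the direct content of $(i)\Rightarrow(ii)$ in Theorem \ref{thm:boundary-of-groves}: each grove $\G{E'}$ with $E'<E$ sits inside $\dd\G{E}$, and taking the union over all such $E'$ gives the inclusion. For the reverse inclusion $\subseteq$, any $F'\in\dd\G{E}$ has a unique topology $E'$, so $F'\in\G{E'}\cap\dd\G{E}\neq\emptyset$, and then $(iii)\Rightarrow(i)$ of the same theorem forces $E'<E$. Disjointness of the union is automatic because groves corresponding to distinct topologies are disjoint by construction (each \wald has a unique topology), so the union is genuinely indexed without overlap.

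For the two identities describing $E'$ and $\lambda'$ in terms of $\lambda^*$, I would reuse the explicit construction performed in the proof of $(iii)\Rightarrow(i)$ in Theorem \ref{thm:boundary-of-groves}. There, starting from $\lambda^*\in\dd([0,1]^E)$ with $\bar\phi_E(\lambda^*)\in\spd$, a candidate \wald $F^\circ = (E^\circ,\lambda^\circ)$ was built whose topology $E^\circ$ is given by \Cref{eq:groves-end-proof-restrict} and whose weights are given by \Cref{eq:groves-end-weights}. These are precisely the right-hand sides of the two formulas we need to prove (modulo the evident typographical correction $\bigcup \to \prod$ in the second). Since that construction established $\phi(F^\circ) = \bar\phi_E(\lambda^*) = \phi(F')$, injectivity of $\phi$ (Theorem \ref{thm:phi-well-defined-injective-into-spd}, transported to split-representations via \Cref{thm:definitions-of-phylo-forest-equivalent}) yields $F^\circ = F'$. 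Hence $E^\circ = E'$ and $\lambda^\circ_{e'} = \lambda'_{e'}$ for every $e'\in E'$, which are exactly the two claimed identities.

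The main obstacle, if one can call it that, is purely bookkeeping: one must check that the connectivity classes $L^\circ_1,\ldots,L^\circ_{K^\circ}$ defined inside the proof of Theorem \ref{thm:boundary-of-groves} from the zero-pattern of $\bar\phi_E(\lambda^*)$ coincide, as an unordered partition, with the leaf partition $L'_1,\ldots,L'_{K'}$ of $F'$, so that the formula \Cref{eq:groves-end-proof-restrict} indeed indexes the splits of $F'$. But this is immediate from $\phi(F') = \bar\phi_E(\lambda^*)$, because in any \wald the partition into connected components is read off directly from the off-diagonal zero-pattern of its $\phi$-image (the entry $\rho_{uv}$ vanishes if and only if $u$ and $v$ lie in different components). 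With this observation in place, the corollary is a clean corollary of the preceding theorem with no further hidden content.
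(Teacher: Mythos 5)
Your proposal is correct and matches the paper's intended reasoning: the paper offers no separate proof beyond the remark that the corollary is ``collected at once'' from \Cref{thm:boundary-of-groves} and its proof, and you have filled in exactly the right details (the boundary decomposition from $(i)\Rightarrow(ii)$ plus $(iii)\Rightarrow(i)$ together with disjointness of groves, and the identification $F^\circ = F'$ via injectivity of $\phi$ applied to the constructions in \Cref{eq:groves-end-proof-restrict} and \Cref{eq:groves-end-weights}). You also correctly spot that $\bigcup$ in the second formula is a typographical slip for $\prod$, and your bookkeeping observation that the zero-pattern of $\phi(F')$ determines the leaf partition is precisely what makes the identification $L^\circ_{\alpha^\circ} \leftrightarrow L'_{\alpha'}$ legitimate.
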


\begin{example}\label{exa:boundary-of-grove-n3}
    Let $N = 3$, so $L = \{1,2,3\}$ and let 
    \begin{equation*}
        E = \{e_1 = 1\vert 23,~e_2 = 2\vert 13,~e_3 = 3\vert 12\}.
    \end{equation*}
    Abbreviating $\lambda_{e_i}=\lambda_i$ for $i=1,2,3$ we have $\G{E} \cong (0,1)^3$, cf. \Cref{eq:grove-cube-identification}, and the map $\bar\phi_E\colon[0,1]^3\to\sym$ from \Cref{eq:phi-map-extended-analytic} has the form
    \begin{equation*}
        \bar\phi_E(\lambda) = 
        \begin{pmatrix}
            1 & \rho_{12} & \rho_{13} \\
            \rho_{12} & 1 & \rho_{23} \\
            \rho_{13} & \rho_{23} & 1
        \end{pmatrix}
        = 
        \begin{pmatrix}
            1 & (1 - \lambda_1)(1 - \lambda_2) & (1 - \lambda_1)(1 - \lambda_3) \\
            (1 - \lambda_1)(1 - \lambda_2) & 1 & (1 - \lambda_2)(1 - \lambda_3) \\
            (1 - \lambda_1)(1 - \lambda_3) & (1 - \lambda_2)(1 - \lambda_3) & 1
        \end{pmatrix}\hspace{-4pt}\,.
    \end{equation*}
    One can easily see from \Cref{lem:boundary-of-cube-when-spd} that $\bar\phi_E(\lambda) \in\spd$ if and only if at most one coordinate of $\lambda$ is zero, otherwise there would exist $u,v\in L$ with $u\neq v$ such that $\rho_{uv} = 1$.
    This means that the origin and the intersections of the coordinate axes with the cube $[0,1]^3$ are not mapped into $\phi(\F)$ and thus not part of the boundary of $\G{E}$.
    The cube and the corresponding \walds $F'\in\F$ of the grove's boundaries (i.e. there exists $\lambda\in\dd([0,1]^E)$ with $\bar\phi_E(\lambda) = \phi(F')$) are depicted in \Cref{fig:example-boundary-of-grove-n3}.
    
    Note that for the boundaries where at least one $\lambda$ coordinate is one, infinitely many coordinates give the same \wald: let $F' = (\{2\vert 3\}, \lambda')$ with $\lambda'_{2\vert 3} = 0.8$, then all coordinates $\lambda^*=(1,\lambda_2^*,\lambda_3^*)\in\dd([0,1]^E)$ that satisfy $1 - (1 - \lambda_2^*)(1 - \lambda_3^*) = 0.8$ give $\bar\phi_E(\lambda^*) = \phi(F')$.
    This is also illustrated in \Cref{fig:example-boundary-of-grove-n3} (right panel), where several arrows point to the coordinates on curves that correspond the same \wald.
    This means that a two-dimensional boundary of the cube collapses into a one-dimensional \grove.
    
    If at least two coordinates of $\lambda^*$ are equal to 1, then the corresponding phylogenetic forest will be the forest consisting of three isolated vertices, and in this case, four points as well as the three segments where two coordinates are $1$ and one is strictly between zero and one on the boundary of the cube collapse to only one point in $\F$, marked red in \Cref{fig:example-boundary-of-grove-n3} (right panel).
\end{example}

\begin{figure}
    \centering
    \begin{minipage}{0.49\linewidth}
    \includegraphics[width=1\linewidth]{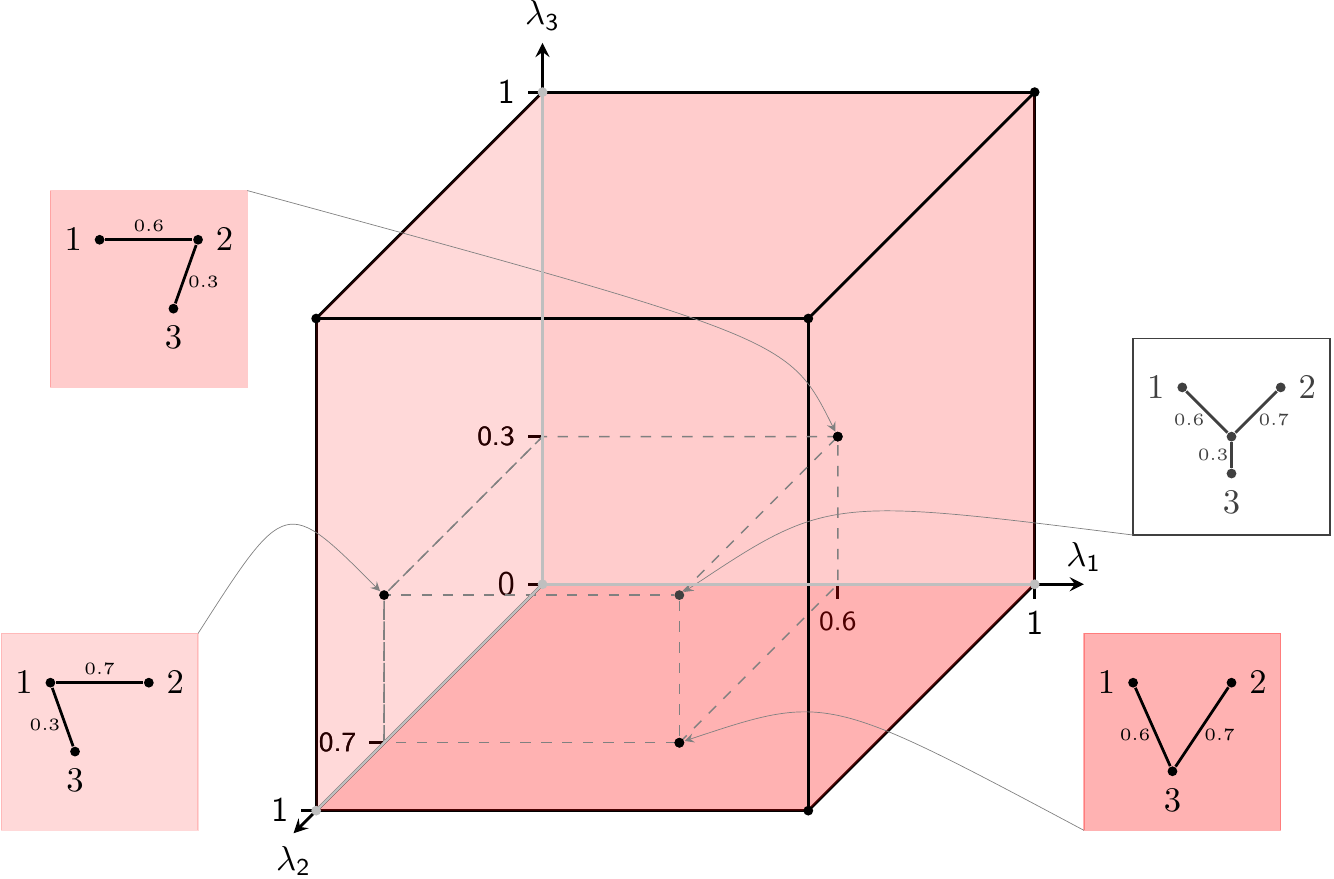}
    \end{minipage}
    \begin{minipage}{0.49\linewidth}
    \includegraphics[width=1\linewidth]{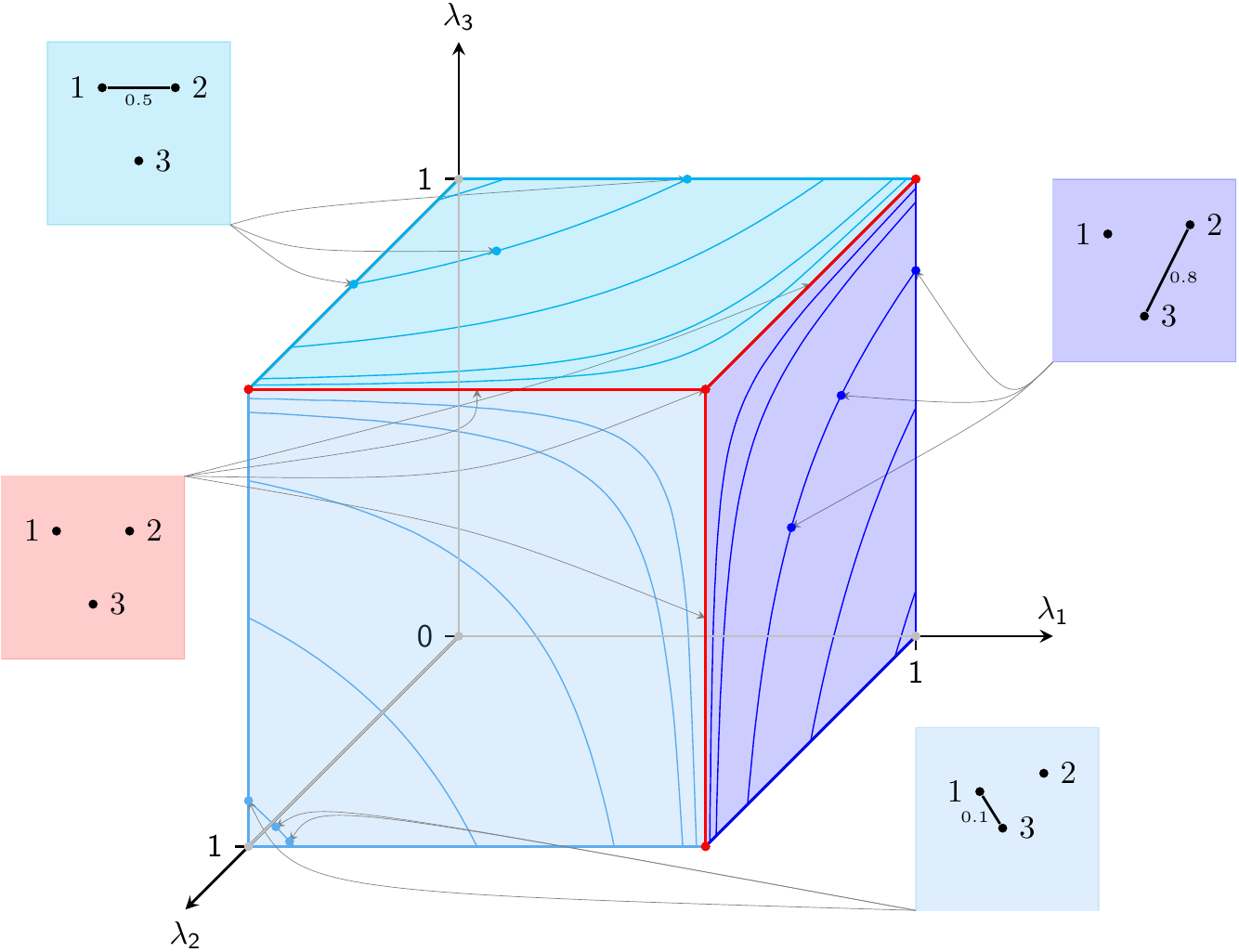}
    \end{minipage}
    \caption{Depicting the grove $\G{E}\cong(0,1)^3$ of a fully resolved tree with $N=3$ leaves, and its boundary $\dd\G{E}$, as discussed in \Cref{exa:boundary-of-grove-n3}. Left: $\G{E}$ and its two-dimensional ``boundary at zero'' (coordinate axes are excluded). Right: the ``boundary at one'' comprising the one-dimensional component (points on same blue curves  represent a single wald) and zero-dimensional component (points on the red spider) represent $F_\infty$. 
}
    \label{fig:example-boundary-of-grove-n3}
\end{figure}

\begin{corollary}\label{cor:converging-sequence-implies-boundary}
    Let $F,F'\in\F$ with topologies $E,E'$, respectively, and let $(F_n)_{n\in\NN}\subset\G{E}\subset\F$ be a sequence. If $F_n\to F'$, then $E'\leq E$.
\end{corollary}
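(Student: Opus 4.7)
The plan is to reduce the statement directly to the tools already established, namely Theorem \ref{thm:wald-convergence-general} and Theorem \ref{thm:boundary-of-groves}. Since the sequence $(F_n)$ lies entirely in the single grove $\mathcal{G}_E$, each $F_n$ corresponds to some $\lambda^{(n)} \in (0,1)^E$, so there is no need to pass to a subsequence with a common topology — it is already the case that $E_n = E$ for all $n$. By compactness of $[0,1]^E$ (Bolzano--Weierstrass), the sequence $\lambda^{(n)}$ has at least one cluster point $\lambda^* \in [0,1]^E$, and Theorem \ref{thm:wald-convergence-general} (applied with $E_n = E$) yields $\phi(F') = \bar{\phi}_E(\lambda^*)$.

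I would then split into two cases depending on where $\lambda^*$ lies. If $\lambda^* \in (0,1)^E$, then $\bar{\phi}_E(\lambda^*) = \phi_E(\lambda^*) \in \phi(\mathcal{G}_E)$, and by injectivity of $\phi$ we conclude $F' \in \mathcal{G}_E$, i.e.\ $E' = E$, whence trivially $E' \leq E$. If on the other hand $\lambda^* \in \partial([0,1]^E)$, then Theorem \ref{thm:wald-convergence-general}, item 3, gives $F' \in \partial \mathcal{G}_E$. Consequently $F' \in \mathcal{G}_{E'} \cap \partial \mathcal{G}_E$, so this intersection is non-empty. Theorem \ref{thm:boundary-of-groves} (the implication \textit{(iii)} $\Rightarrow$ \textit{(i)}) then immediately yields $E' < E$, and in particular $E' \leq E$.

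I expect no real obstacle here: all the heavy lifting — the existence of a cluster point and the matrix identification $\phi(F') = \bar{\phi}_E(\lambda^*)$, and the key equivalence between boundary incidence and the partial order — has already been accomplished. The only subtle point to state carefully is the dichotomy $\lambda^* \in (0,1)^E$ versus $\lambda^* \in \partial([0,1]^E)$, and the observation that in the first case injectivity of $\phi$ forces $E' = E$ (ruling out the possibility that the limit belongs to a genuinely different grove with the same $\phi$-image, which cannot happen by Theorem \ref{thm:phi-well-defined-injective-into-spd}).
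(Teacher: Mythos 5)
Your proposal is correct and takes essentially the same route as the paper: find a cluster point $\lambda^*\in[0,1]^E$ of the coordinates $\lambda^{(n)}$, observe $\bar\phi_E(\lambda^*)=\phi(F')$, and split into the two cases $\lambda^*\in(0,1)^E$ (giving $E'=E$) versus $\lambda^*\in\partial([0,1]^E)$ (giving $F'\in\partial\G{E}$ and hence $E'<E$ via Theorem~\ref{thm:boundary-of-groves}). One small wrinkle: in the boundary case you invoke item~3 of Theorem~\ref{thm:wald-convergence-general} to conclude $F'\in\partial\G{E}$, but that item takes $E\neq E'$ as a hypothesis, which is not yet established at that point; the clean step (the one the paper uses) is to cite the definition of $\partial\G{E}$ in \Cref{eq:boundary-of-grove} directly, since $\lambda^*\in\partial([0,1]^E)$ together with $\bar\phi_E(\lambda^*)\in\spd$ is exactly what that definition requires. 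With that substitution the argument is airtight.
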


\begin{proof}
    Let $\lambda^{(n)}\in(0,1)^E\cong\G{E}$ such that $\phi_E(\lambda^{(n)}) = \phi(F_n)$ for all $n\in\NN$.
    With the same argument as in the proof of \Cref{thm:wald-convergence-general}, there exists at least one subsequence $(\lambda^{(n_k)})_{k\in\NN}$ such that $\lambda^{(n_k)}\to\lambda^*\in[0,1]^E$ with $\bar\phi_E(\lambda^*) = \phi(F') \in \spd$, so either $F'\in\G{E}$, then $E' = E$, or $F'\in\dd\G{E}$ (by definition of $\dd\G{E}$ from \Cref{eq:boundary-of-grove}), then by \Cref{thm:boundary-of-groves} it follows that $E' < E$, so in general $E'\leq E$.
\end{proof}

\subsection{Whitney Stratification of \waldspace}\label{scn:whitney}

Recall from \Cref{scn:notation} the differentiable manifold of strictly positive definite matrices $\spd$, and that the tangent space $T_P\spd$ at $P\in\spd$ is isomorphic to the vector space of symmetric matrices $\sym$.
In order to study convergence of linear subspaces of $\sym$, we recall the \emph{Grassmannian manifold} of $k$-dimensional linear subspaces in $\RR^m$, $0\leq k \leq m$, see e.g. \cite[Chapter 7]{lee_introduction_2018}.

Every $k$-dimensional linear subspace $\mc{V}$
of $\RR^m$ is the span of the columns $v_1,\ldots,v_k$ of a matrix $(v_1,\ldots,v_k) = V \in S(m,k)$, the \emph{column space}, 
$$\mc{V} = \Span\{v_1,\ldots,v_k\} = \col(V)$$
where $$ S(m,k) = \{V \in \RR^{m\times k}: \rank(V) = k\}$$
is the \emph{Stiefel manifold} of maximal rank $(m\times k)$-matrices equipped with the smooth manifold structure inherited from embedding in the Euclidean $\RR^{m\times k}$. Since $\col(V) = \col(VG)$ for every $G\in S(k,k)$ and $V\in S(k,m)$, the space
$$\big\{\mc{V}\subset\RR^m\colon \mc{V}\text{ linear subspace}, \dim(\mc{V}) = k\big\}$$
can be identified with the \emph{Grassmannian}
$$ G(m, k) \coloneqq S(m,k)/S(k, k)\,.$$
As every orbit $\{VG: G \in S(k,k)\}$ of $V\in S(m,k)$ is closed in $S(m,k)$ and  since for every $V\in S(m,k)$ its \emph{isotropy group} $\{G\in S(k,k): VG= V\}$ contains the unit matrix only, the quotient carries a canonical smooth manifold structure. 
\begin{definition}\label{def:conv-Grassmannian}
    With the above notation, 
a sequence of $k$-dimensional linear subspaces $\mc{V}_n$ , $n\in \NN$, of $\RR^m$, $1\leq k < m$, \emph{converges in the Grassmannian $G(m,k)$} to a  $k$-dimensional linear subspace $\mc{V}$ if there are $V_n,V \in S(m,k)$ and $G_n \in S(k,k)$ such that 
$$ \col(V_n) = \mc{V}_n \mbox{ for all }n\in\NN,~\col(V)=\mc{V}\mbox{ and } \|V_nG_n - V\| \to 0\mbox{ as }n\to\infty\,.$$
\end{definition}

\begin{remark}\label{rmk:Grassmann}
     1) Note that none of the cluster points of $G_n$ or $G_n/\|G_n\|$ can be singular, hence they are all in $S(k,k)$
     
     2) There may be, however, a sequence $V_n \in S(m,k)$ and $V\in S(m,k)$, $W\in \RR^{m\times k}\setminus S(m,k)$ with
$$\col(V_n) = \mc{V}_n \to \mc{V} = \col(V)$$
in the Grassmanian $G(m,k)$ but 
$$\|V_n-W\| \to 0$$
in $\RR^{m\times k}$. Nevertheless we have the following relationship.
\end{remark}

\begin{lemma}
    \label{lem:limits-of-subspaces}
    Let $V_n \in S(m,k)$ and assume that the two limits below exist.  
    %with $V_n \to W\in \RR{m\times k}$ and $V\in S(m,k)$ with $\col(V_n) \to V$ in the Grassmanian $G(m,k)$. 
    Then
\begin{eqnarray*} %\label{eq:Grassman-Eucl-conv} 
\col\left(\lim_{n\to \infty}  V_n\right) &\subseteq& \lim_{n\to \infty} \col(V_n)\,.
\end{eqnarray*}
\end{lemma}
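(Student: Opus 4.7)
The plan is to unpack both convergence hypotheses directly. Write $W := \lim_n V_n \in \RR^{m\times k}$ (which need not be of full rank) and let $V \in S(m,k)$ together with $G_n \in S(k,k)$ witness the Grassmannian limit, so $V_n G_n \to V$. Fix an arbitrary $w \in \col(W)$, say $w = Wc$ for some $c \in \RR^k$; the goal is to produce $d \in \RR^k$ with $Vd = w$, which will show $w \in \col(V) = \lim_n \col(V_n)$. The key identity I would exploit is that $G_n$ is invertible, so for $d_n := G_n^{-1} c$ one has
$$ V_n c \;=\; (V_n G_n)\, d_n, $$
where the left-hand side converges to $w$ and $V_n G_n \to V$; if $d_n$ converged to some $d$, we would immediately get $Vd = w$ and be done.

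The main obstacle, and the step needing the most care, is that neither $G_n$ nor $G_n^{-1}$ need stay bounded; this is exactly the cautionary note of \Cref{rmk:Grassmann}, so one cannot pass to the limit in $d_n$ naively. I would handle it by first establishing boundedness of $\{d_n\}$. Suppose for contradiction that along a subsequence $\|d_n\| \to \infty$, normalize to $\tilde d_n := d_n/\|d_n\|$, and extract a further subsequence with $\tilde d_n \to \tilde d$, $\|\tilde d\| = 1$, using compactness of the unit sphere. Dividing the key identity by $\|d_n\|$ gives
$$ V_n G_n \,\tilde d_n \;=\; \frac{V_n c}{\|d_n\|} \;\longrightarrow\; 0, $$
because $V_n c \to w$ stays bounded while $\|d_n\|\to\infty$; on the other hand $V_n G_n \tilde d_n \to V\tilde d$, so $V\tilde d = 0$, contradicting $\rank V = k$.

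With $\{d_n\}$ now bounded, I would extract any convergent subsequence $d_n \to d$ and pass to the limit in $V_n G_n d_n = V_n c$ to obtain $V d = w$. Since $V \in S(m,k)$ has trivial kernel, the preimage $d$ is uniquely determined by $w$, so the full sequence $d_n$ converges (every subsequential limit equals this unique $d$), and $w = V d \in \col(V)$. As $w \in \col(W)$ was arbitrary, the asserted inclusion follows; no further subtleties appear, as the argument ultimately rests only on the injectivity of the full-rank matrix $V$.
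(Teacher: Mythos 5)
Your proof is correct and takes a genuinely different route from the paper's. The paper argues dually: it fixes $v$ orthogonal to $\lim_n \col(V_n)$, notes that $v^T V_n G_n \to 0$, and splits into cases according to whether $\|G_n\| > 1$ along some subsequence or is eventually $\leq 1$; in each case it passes to a cluster point $R$ of $G_n/\|G_n\|$ (resp.\ of $G_n$), deduces $v^T W R = 0$, and invokes \Cref{rmk:Grassmann} — the assertion that such cluster points lie in $S(k,k)$ — to cancel $R$ and conclude $v^T W = 0$. Your argument is primal: for each $w = W c \in \col(W)$ you write $V_n c = (V_n G_n)\, d_n$ with $d_n = G_n^{-1}c$, prove boundedness of $\{d_n\}$ by a normalization-and-compactness contradiction resting only on injectivity of the full-rank matrix $V$, and extract a convergent subsequence to produce a preimage of $w$ under $V$. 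A concrete advantage of your route is that it does not rely on \Cref{rmk:Grassmann}: the nonsingularity of cluster points of $G_n/\|G_n\|$ is in fact delicate once $\|G_n\|$ diverges — for instance with $m=3$, $k=2$, $V_n$ the matrix with columns $e_1$ and $e_2/n$, and $G_n = \mathrm{diag}(1,n)$, one has $V_n G_n$ constantly equal to the full-rank matrix with columns $e_1,e_2$, yet $G_n/\|G_n\| \to \mathrm{diag}(0,1)$ is singular. Your contradiction absorbs the unbounded case directly without any such assertion, so it is arguably the tighter argument. One small economy: you do not need the entire sequence $(d_n)$ to converge; a single convergent subsequence already yields $Vd = w$.
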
    

\begin{proof}
Let $v\in \RR^m$ with $v \perp \lim_{n\to \infty} \col(V_n)$. Then the assertion follows, once we show  $v \perp W$ with $W=\lim_{n\to \infty} V_n$.

By hypothesis, for every $\epsilon >0$ there are $N\in \NN$ and $G_n \in S(k,k)$ such that
$$ \vert v^T V_n G_n\vert < \epsilon~\forall n > N\,.$$
Let us first assume that there is a subsequence $n_k$ with $\|G_{n_k}\| > 1$. Then
$$ \vert v^T V_{n_k} R_{n_k}\vert < \frac{\epsilon}{\|G_{n_k}\|} < \epsilon~\forall n_k > N\,,$$
where $R_{n_k} = \frac{G_{n_k}}{\|G_{n_k}\|} \in S(k,k)$ is of unit norm, hence it has a cluster point $R$ satisfying $ \vert v^T W R\vert \leq  \epsilon$. As $\epsilon >0$  was arbitrary, we have $v^TWR =0$. Since $R\in S(k,k)$ by \Cref{rmk:Grassmann} we have thus $v^TW =0$ as asserted.

If there is no such subsequence, w.l.o.g. we may assume $\|G_n\| \leq 1$ for all $n\geq N$. Again, $G_n$ has a cluster point $R$ and thus $ \vert v^T W R\vert \leq  \epsilon$ which implies, as above, $v^TWR =0$. Since $R\in S(k,k)$ by \Cref{rmk:Grassmann} we have $v^TW =0$ as asserted.
 
\end{proof}

In the following, recall the definition of a Whitney stratified space of type (A) and (B), respectively, taken from the wording of \citet[Section~10.6]{huckemann_statistical_2020}.

\begin{definition}\label{def:whitney-stratification}
    A \emph{stratified space} $\mc{S}$ of dimension $m$ embedded in a Euclidean space (possibly of higher dimension $M \geq m$) is a direct sum \begin{equation*}
        \mc{S} = \bigsqcup_{i=1}^k S_i
    \end{equation*}
    such that $0\leq d_1 < \ldots < d_k = m$, each $S_{i}$ is a $d_i$-dimensional manifold and $S_{i}\cap S_{j} = \emptyset$ for $i\neq j$ and if $S_{i}\cap\overline{S_{j}}\neq\emptyset$ then $S_{i}\subset\overline{S_{j}}$.

    A stratified space $\mc{S}$ is \emph{Whitney stratified of type (A)}, 
    \begin{itemize}
        \item[(A)] if for a sequence $q_1,q_2,\dots\in S_j$ that converges to some point $p\in S_i$, such that the sequence of tangent spaces $T_{q_n}S_j$ converges  in the Grassmannian $G(M,d_j)$ to a $d_j$-dimensional linear space $T$ as $n\to\infty$, then $T_p S_{i}\subseteq T$, where all the linear spaces are seen as subspaces of $\RR^M$.
        %and convergence is in the Grassmannian manifold $G(M,d_j)$.
    \end{itemize}
    Moreover, a stratified space $\mc{S}$ is a \emph{Whitney stratified space of type (B)},
    \begin{itemize}
        \item[(B)] if for sequences $p_1,p_2,\dots\in S_{i}$ and $q_1,q_2,\dots\in S_{j}$ which converge to the same point $p\in S_{i}$ such that the sequence of secant lines $c_n$ between $p_n$ and $q_n$ converges to a line $c$ as $n\to\infty$ (in the Grassmannian $G(M, 1)$), and such that the sequence of tangent planes $T_{q_n}S_{j}$ converges to a $d_j$-dimensional plane $T$ as $n\to\infty$ (in the Grassmannian $G(M, d_j)$), then $c\subset T$.
    \end{itemize}
\end{definition}

\begin{theorem}\label{thm:whitney}
    Wald space with the smooth structure on every \grove $\G{E}$ conveyed by $\phi_E$ from (\ref{eq:phi-map-restricted-to-grove}), 
    is a Whitney stratified space of type (A).
\end{theorem}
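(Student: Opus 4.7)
The wald space carries a stratification in the sense of Definition \ref{def:whitney-stratification} by declaring the strata to be the groves $\G{E}$ (or, if a strictly increasing dimension list is required, the disjoint unions of groves of a common dimension). Indeed, by Theorem \ref{lem:grove-topology-eucl-phi-is-embedding} each $\G{E}$ is a smoothly embedded submanifold of $\cP$ of dimension $|E|$, and by Theorem \ref{thm:boundary-of-groves} together with Corollary \ref{cor:closure-of-grove-consists-of-other-groves} the frontier condition $\G{E'} \subset \overline{\G{E}} \iff E' \leq E$ holds. To verify condition (A), let $F_n = (E, \lambda^{(n)}) \in \G{E}$ converge to $F' = (E', \lambda') \in \G{E'}$, and assume that $T_{F_n}\G{E}$ converges in the Grassmannian $G(\dim\sym, |E|)$ to some subspace $T \subseteq \sym$. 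The case $E' = E$ is immediate from the smoothness of $\phi_E$, so we assume $E' < E$, and, by Theorem \ref{thm:wald-convergence-general}, after passing to a subsequence, $\lambda^{(n)} \to \lambda^* \in [0,1]^E$ with $\bar\phi_E(\lambda^*) = \phi(F')$. The proof of Theorem \ref{thm:boundary-of-groves}, (iii)$\Rightarrow$(i), then supplies the disjoint decomposition $E = R_{cut} \sqcup R_{dis} \sqcup \bigsqcup_{e' \in E'} R_{e'}$ from Definition \ref{rmk:cut-disappear}.

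For each $e' \in E'$, Lemma \ref{lem:groves-end-basics} (iv) and (ix) give that $R_{e'}$ is nonempty and disjoint from $R_{cut}$, so we may fix some $e_* = e_*(e') \in R_{e'}$ and set, in light of \Cref{eq:partial-derivative-of-embedding},
\begin{equation*}
    v_n^{e'} \;:=\; \frac{1 - \lambda^{(n)}_{e_*}}{1 - \lambda'_{e'}} \cdot \frac{\dd \bar\phi_E}{\dd \lambda_{e_*}}(\lambda^{(n)}) \;\in\; T_{F_n}\G{E}.
\end{equation*}
A direct substitution gives $(v_n^{e'})_{uv} = -\rho^{(n)}_{uv}\,\delta_{e_* \in E(u,v)}/(1-\lambda'_{e'})$, which I will show converges to $(\dd \phi_{E'}/\dd \lambda'_{e'})(\lambda')_{uv} = -\rho'_{uv}\,\delta_{e' \in E'(u,v)}/(1-\lambda'_{e'})$ componentwise. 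Three cases: diagonal entries vanish on both sides; for $u \neq v$ lying in a common connected component of $F'$, Lemma \ref{lem:groves-end-basics} (viii) applied to $e_* \in R_{e'}$ yields $\delta_{e_* \in E(u,v)} = \delta_{e' \in E'(u,v)}$, and, together with $\rho^{(n)}_{uv} \to \rho'_{uv}$, the desired limit follows; for $u$ and $v$ in distinct components of $F'$, we have $\rho'_{uv} = 0$, so the bound $|(v_n^{e'})_{uv}| \leq \rho^{(n)}_{uv}/(1-\lambda'_{e'}) \to 0$ matches the zero target entry.

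To conclude, since $v_n^{e'} \in T_{F_n}\G{E}$ and $T_{F_n}\G{E} \to T$ in the Grassmannian, any limit of such a convergent sequence of tangent vectors must lie in $T$ (the proof is the same bounded-coordinate argument as in Lemma \ref{lem:limits-of-subspaces}). Hence $(\dd\phi_{E'}/\dd\lambda'_{e'})(\lambda') \in T$ for every $e' \in E'$, and because these vectors span $T_{F'}\G{E'}$ by Theorem \ref{lem:grove-topology-eucl-phi-is-embedding}, we obtain $T_{F'}\G{E'} \subseteq T$. The main obstacle is selecting the correct approximating tangent direction: edges in $R_{cut}$ have $\lambda^*_e = 1$ and do not correspond to any surviving split, while edges in $R_{dis}$ collapse to weight zero and do not correspond to splits of $E'$ either; the construction sidesteps both pathologies by working inside $R_{e'} \subseteq E \setminus R_{cut}$, and the scalar factor $(1-\lambda^{(n)}_{e_*})/(1-\lambda'_{e'})$ is chosen exactly to match the prefactor in $\dd\phi_{E'}/\dd\lambda'_{e'}$ in the limit, so that no delicate cancellation between different edges of $R_{e'}$ is required.
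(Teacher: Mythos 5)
Your proof of the Whitney condition (A) itself is correct and takes a genuinely different route from the paper's. The paper shows the chain of inclusions
\begin{equation*}
T_{F'}\G{E'} \;\subseteq\; \vspan\bigg\{\frac{\dd\bar\phi_E}{\dd\lambda_e}(\lambda^*)\colon e\in E\bigg\} \;\subseteq\; T,
\end{equation*}
proving the first inclusion by exhibiting, for each $e'\in E'$, a single $e\in R_{e'}$ with $\lambda^*_e\neq 0$ and a constant $c>0$ with $\dd\phi_{E'}/\dd\lambda'_{e'}(\lambda') = c\,\dd\bar\phi_E/\dd\lambda_e(\lambda^*)$, and the second inclusion via Lemma~\ref{lem:limits-of-subspaces} applied to the full $(\dim\sym)\times|E|$ basis matrices. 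You instead rescale the $n$-dependent partial derivative $\dd\bar\phi_E/\dd\lambda_{e_*}(\lambda^{(n)})$ by the scalar factor $(1-\lambda^{(n)}_{e_*})/(1-\lambda'_{e'})$ so that the limit is \emph{exactly} $\dd\phi_{E'}/\dd\lambda'_{e'}(\lambda')$, and then invoke the fact that a convergent sequence of vectors, each lying in a converging sequence of subspaces, has its limit in the Grassmannian limit. This one-step argument is cleaner and avoids evaluating $\bar\phi_E$ at the boundary point $\lambda^*$ altogether; it also sidesteps the need to choose $e$ with $\lambda^*_e\neq0$, since only $e_*\notin R_{cut}$ (i.e.~$\lambda^*_{e_*}<1$) is needed, which Lemma~\ref{lem:groves-end-basics}~(ix) guarantees. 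Your case analysis via Lemma~\ref{lem:groves-end-basics}~(viii) and the $\rho'_{uv}=0$ degenerate case is correct and matches the paper's bookkeeping in substance. Two small caveats: first, the ``vector-in-converging-subspaces'' fact you use is not literally Lemma~\ref{lem:limits-of-subspaces} (which concerns full-rank basis matrices rather than single vectors), though it follows by the same argument or, more simply, by passing to the limit of the orthogonal projections onto $\mc V_n^\perp$; you should state it as a separate (easy) claim rather than citing the lemma directly.

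The one genuine gap is in the preliminary stratification check. Definition~\ref{def:whitney-stratification} requires the strata $S_i = \bigsqcup_{|E|=i}\G{E}$ to satisfy $S_i\cap\overline{S_j}\neq\emptyset\Rightarrow S_i\subset\overline{S_j}$; you verify only the grove-level statement $\G{E'}\subset\overline{\G{E}}\iff E'\leq E$. Because $S_i$ is a union over \emph{all} topologies with $i$ splits, one must additionally show that whenever a single $\G{E'}$ with $|E'|=i$ lies in $\overline{S_j}$, then \emph{every} grove of dimension $i$ does so. This is where the paper invokes Lemma~\ref{lem:groves-end-basics}~(xiii): any \waldtop with $i<2N-3$ splits extends to one with $i+1$ splits, and by iterating one obtains, for each $\widetilde E'$ with $|\widetilde E'|=i$, some $\widetilde E$ with $|\widetilde E|=j$ and $\widetilde E'<\widetilde E$, hence $\G{\widetilde E'}\subset\overline{\G{\widetilde E}}\subset\overline{S_j}$. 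Without this step the frontier condition for the $S_i$ is unproven.
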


\begin{proof}
    First, we show that $\W$ is a stratified space.
    In conjunction with Remark \ref{rmk:grove-rep}, the manifolds $S_i$ of dimension $d_i=i$ are the unions over disjoint \groves of $\W$ of equal dimenison $i=0,\dots,2N-3 =m$, counting the number of edges, each diffeomorphic to an $i$-dimensional open unit cube,
    \begin{equation*}
        S_i = \bigsqcup_{\vert E\vert = i} \G{E}\,.
    \end{equation*}
    If $S_{i}\cap\overline{S_{j}}\neq\emptyset$ for some  $0\leq i\neq j\leq m$ then there are \waldtops $E,E'$  with $j=\vert E\vert$, $i=\vert E' \vert$ and $\G{E'} \cap \in \overline{\G{E}}\neq\emptyset$, implying  $\G{E'} \subset \overline{\G{E}}$ by Theorem \ref{thm:boundary-of-groves}. In particular, then $i<j$. Further, if $\widetilde{E}'$ with $i=\vert E' \vert$  is any other \waldtop, induction on \Cref{lem:groves-end-basics} (xiii) shows that it can be extended  to a \waldtop $\widetilde{E}$ with  $j=\vert E\vert$ such that $\widetilde{E}'<\widetilde{E}$ and hence $\G{\widetilde{E}'} \subset \overline{\G{\widetilde{E}'}}$ by Theorem \ref{thm:boundary-of-groves}. Thus, we have shown that $S_{i}\subset\overline{S_{j}}$,  as required.
    
    In order to show Whitney condition (A), it suffices to assume $i\neq j$. Let $F_1,F_2,\dots\in S_j$ be a sequence of \walds that converges to some \wald $F' = (E',\lambda') \in S_i$, so $i<j$.  Since $S_j$ is a disjoint union of finitely many \groves, w.l.o.g. we may assume that $F_1,F_2,\dots\in\G{E}$ for some \waldtop $E> E'$ with $\vert E\vert = j$. Hence, under the hypothesis that $T_{F_n}\G{E} \cong T_{\Phi(F_n)} \Phi_{E}(\G{E}) \subset\sym$ converges in the Grassmannian $G(\dim(\sym),j)$, to a $j$-dimensional linear space $T\subset\sym$ as $n\to\infty$, we need to show that
    \begin{equation}\label{eq:proof-thm-Whithney-A-1}
    T_{F'}\G{E'} \cong T_{\Phi(F')} \Phi_{E'}(\G{E'}) \subseteq T\,.
    \end{equation}
    With the analytic continuation $\bar\phi_E$ of $ \phi_E$, see Remark \ref{rmk:phi-analytic-cont}, a cluster point $\lambda^* \in [0,1]^E$ of $\lambda^{(n)} = \phi_E^{-1}(F_n) \in (0,1)^E$, $F' = \phi^{-1}\circ \bar\phi_E (\lambda^*)$, see Theorem \ref{thm:wald-convergence-general}, and the unit standard basis $\partial/\partial \lambda_e$, $e\in E$ of $\G{E} \cong (0,1)^E$ we have thus
    $$ T_{\Phi(F_n)} \Phi_{E}(\G{E}) = \vspan\bigg\{\frac{\dd\bar\phi_E}{\dd\lambda_e}\big(\lambda^{(n)}\big)\colon e\in E\bigg\}\,$$
    and, due to \Cref{lem:limits-of-subspaces}, 
    \begin{align*}
        \vspan\bigg\{\frac{\dd\bar\phi_E}{\dd\lambda_e}\big(\lambda^*\big)\colon e\in E\bigg\} &= \vspan\bigg\{\lim_{n\to\infty}\frac{\dd\bar\phi_E}{\dd\lambda_e}\big(\lambda^{(n)}\big)\colon e\in E\bigg\}\\
        &\subseteq\lim_{n\to\infty}\vspan\bigg\{\frac{\dd\bar\phi_E}{\dd\lambda_e}\big(\lambda^{(n)}\big)\colon e\in E\bigg\} = T.
    \end{align*}
    Since likewise
    $$ T_{\phi(F')}\phi(\G{E'}) = \vspan\bigg\{\frac{\dd\phi_{E'}}{\dd\lambda'_{e'}}(\lambda')\colon e'\in E'\bigg\}$$
    showing assertion (\ref{eq:proof-thm-Whithney-A-1}) is equivalent to showing 
    \begin{equation*}
     \vspan\bigg\{\frac{\dd\phi_{E'}}{\dd\lambda'_{e'}}(\lambda')\colon e'\in E'\bigg\} \subseteq \vspan\bigg\{\frac{\dd\bar\phi_E}{\dd\lambda_e}\big(\lambda^*\big)\colon e\in E\bigg\}\,.
    \end{equation*}
    To see this, it suffices to show that for each $e'\in E'$, there exists a constant $c>0$ and an edge $e\in E$ such that
    \begin{equation}\label{eq:proof-thm-Whithney-A-2}
        \frac{\dd\phi_{E'}}{\dd\lambda'_{e'}}(\lambda') = c\,\frac{\dd\bar\phi_E}{\dd\lambda_e}(\lambda^*).
    \end{equation}
    In the following we show (\ref{eq:proof-thm-Whithney-A-2}).
    
    Recalling  for $u,v\in L$  
    $$
        \Big(\bar\phi_E(\lambda^*)\Big)_{uv} 
        = \prod_{e\in E(u,v)} \big(1 - \lambda^*_e\big)\,,\quad
        \Big(\phi_{E'}(\lambda')\Big)_{uv} 
        = \prod_{e'\in E'(u,v)} \big(1 - \lambda'_{e'}\big)$$
     from \Cref{rmk:grove-cube-identification}, obtain their derivatives
    \begin{align}\label{eq:proof-thm-Whithney-A-3}
        \bigg(\frac{\dd\bar\phi_E}{\dd\lambda_e}(\lambda^*)\bigg)_{uv} 
        &= -\one_{e\in E(u,v)} \prod_{\substack{\tilde{e}\in E(u,v)\\\tilde{e}\neq e}} \big(1 - \lambda^*_{\tilde{e}}\big),\\\label{eq:proof-thm-Whithney-A-4}
        \bigg(\frac{\dd\phi_{E'}}{\dd\lambda'_{e'}}(\lambda')\bigg)_{uv} 
        &= -\one_{e'\in E'(u,v)} \prod_{\substack{\tilde{e}'\in E'(u,v)\\\tilde{e}'\neq e'}} \big(1 - \lambda'_{\tilde{e}'}\big).
    \end{align}
    Recall from \Cref{cor:closure-of-grove-consists-of-other-groves} the two relationships between $F'$ and $\bar\phi_E(\lambda^*)$:
    %as derived in the ``$(iii)\Rightarrow(i)$'' part in the proof of \Cref{thm:boundary-of-groves} holds true here as well, in particular by \eqref{eq:groves-end-proof-restrict}, due to $E^0_{\alpha^0}= E'_{\alpha'}$ as shown in Claim II there,
    \begin{equation*}
        E'_{\alpha'} = \big\{e'\colon e' = e\res{L'_{\alpha'}} \text{ is a valid split of } L'_{\alpha'}, e\in E \text{ and } \lambda^*_e \neq 0 \big\}
    \end{equation*}
    as well as 
%    and likewise, due to \eqref{eq:groves-end-weights}, 
    for each $e'\in E'$, 
    \begin{equation}
        \label{eq:formula-for-edge-lengths-wrt-boundary}
        \lambda'_{e'} = 1- \prod_{e\in R_{e'}} (1-\lambda^*_e)\neq 0.
    \end{equation}
    Consequently, for any $e'\in E'_{\alpha'}$ there exists $e\in R_{e'}$ with $\lambda^*_e\neq 0$.
    
    Now, let $u,v\in L$ be arbitrary and for every $e'\in E'$, we consider $e$ as above.
    \begin{enumerate}
        \item 
            Case $e\notin E(u,v)$. Then $(\bar \phi_E(\lambda^*))_{uv}$ is constant as $\lambda_e$ varies and since
            $R_{e'} \ni e \not\in E(u,v)$, i.e. $R_{e'}\not\subseteq E(u,v)$, we have $e'\not \in E'(u,v)$ by \Cref{lem:groves-end-basics} (viii) so that likewise $(\phi_{E'}(\lambda'))_{uv}$ is constant as $\lambda'_{e'}$ varies, yielding
            \begin{equation*}
            \bigg(\frac{\dd\phi_{E'}}{\dd\lambda'_{e'}}(\lambda')\bigg)_{uv} = 0 = \bigg(\frac{\dd\bar\phi_E}{\dd\lambda_e}(\lambda^*)\bigg)_{uv}.
            \end{equation*}
            Thus for $c$ in \eqref{eq:proof-thm-Whithney-A-2} any positive constant can be chosen.
        \item 
            Case $e=A\vert B\in E(u,v)$. W.l.o.g. assume that $u \in A$ and $v\in B$. Then there are two subcases:
            \begin{enumerate}
                \item 
                    $e'\notin E'(u,v)$. 
                    On the one hand, as above this implies $\big(\frac{\dd\phi_{E'}}{\dd\lambda'_{e'}}(\lambda')\big)_{uv} =0$, on the other hand, as $e' = A\cap L'_{\alpha'}\vert B\cap L'_{\alpha'}\notin E'(u,v)$,  either $u\notin L'_{\alpha'}$ or $v\notin L'_{\alpha'}$, implying 
                    \begin{equation*}
                        0 = \big(\phi_{E'}(\lambda')\big)_{uv} = \big(\bar\phi_E(\lambda^*)\big)_{uv} =\prod_{\tilde{e}\in E(u,v)} \big(1 - \lambda^*_{\tilde{e}}\big)\,.
                    \end{equation*}
                    Thus $\lambda^*_{\tilde{e}} = 1$ for some $\tilde{e}\in E(u,v)$ with $\tilde{e}\neq e$ (recall that $\lambda^*_e <1$ for otherwise $\lambda'_{e'} =1$ by \eqref{eq:formula-for-edge-lengths-wrt-boundary}), which implies in conjunction with \eqref{eq:proof-thm-Whithney-A-3} that
                    \begin{equation*}
                        \bigg(\frac{\dd\bar\phi_E}{\dd\lambda_e}(\lambda^*)\bigg)_{uv} = 0 = \bigg(\frac{\dd\phi_{E'}}{\dd\lambda'_{e'}}(\lambda')\bigg)_{uv}.
                    \end{equation*}
                    Again,  for $c$ in \eqref{eq:proof-thm-Whithney-A-2}) any positive constant can be chosen.
                \item 
                    $e'\in E'(u,v)$.
                    Then \Cref{lem:groves-end-basics} (viii) yields $R_{e'} \subseteq E(u,v)$ and we have, invoking \eqref{eq:proof-thm-Whithney-A-4} as well as \eqref{eq:formula-for-edge-lengths-wrt-boundary}, that 
                    \begin{eqnarray} \nonumber
                        \bigg(\frac{\dd\phi_{E'}}{\dd\lambda'_{e'}}(\lambda')\bigg)_{uv} 
                        ~=~ -\prod_{\substack{\tilde{e}'\in E'(u,v)\\\tilde{e}'\neq e'}} \big(1 - \lambda'_{\tilde{e}'}\big)
                        &=& -\prod_{\substack{\tilde{e}'\in E'(u,v)\\\tilde{e}'\neq e'}} \prod_{\tilde{e} \in R_{\tilde{e}'}} \big(1 - \lambda^*_{\tilde{e}}\big)\
                        \\ \label{eq:proof-thm-Whithney-A-6}
                        &=& -\prod_{\substack{\tilde{e}\in E(u,v)\\\tilde{e}\notin R_{e'}}} \big(1 - \lambda^*_{\tilde{e}}\big)\,,
                    \end{eqnarray}
                    where the last equality follows from observing that $\tilde{e}\notin R_{e'} \Leftrightarrow \exists \tilde{e}' \in E', \tilde{e}'\neq e'\st \tilde{e}\in R_{\tilde{e}'}$, due to  \Cref{lem:groves-end-basics} (vi).
                    Furthermore, again by \eqref{eq:proof-thm-Whithney-A-3}, recalling from above that $R_{e'}  \subseteq E(u,v)$ and \eqref{eq:proof-thm-Whithney-A-6},
                    \begin{align*}
                        \bigg(\frac{\dd\bar\phi_E}{\dd\lambda_e}(\lambda^*)\bigg)_{uv}
                        &= - \prod_{\substack{\tilde{e}\in E(u,v)\\\tilde{e}\neq e}} \big(1 - \lambda^*_{\tilde{e}}\big)\\
                        &= -\bigg(\prod_{\substack{\tilde{e}\in R_{e'}\\\tilde{e}\neq e}} \big(1 - \lambda^*_{\tilde{e}}\big)\bigg)
                        \bigg(\prod_{\substack{\tilde{e}\in E(u,v)\\\tilde{e}\notin R_{e'}}} \big(1 - \lambda^*_{\tilde{e}}\big)\bigg)\\
                        &= \bigg(\prod_{\substack{\tilde{e}\in R_{e'}\\\tilde{e}\neq e}} \big(1 - \lambda^*_{\tilde{e}}\big)\bigg) \bigg(\frac{\dd\phi_{E'}}{\dd\lambda'_{e'}}(\lambda')\bigg)_{uv}\,.
                    \end{align*}
                    Thus $$c = \prod_{\substack{\tilde{e}\in R_{e'}\\\tilde{e}\neq e}} \big(1 - \lambda^*_{\tilde{e}}\big)$$ satisfies \eqref{eq:proof-thm-Whithney-A-2} as it does not depend on $u$ and $v$ and is non-zero by \Cref{eq:formula-for-edge-lengths-wrt-boundary}.
            \end{enumerate}
    \end{enumerate}
        Having thus shown \eqref{eq:proof-thm-Whithney-A-2}, as detailed above we have established \eqref{eq:proof-thm-Whithney-A-1} thus verifying Whitney condition (A).
\end{proof}

Whitney condition (B) is a conjecture.

\section{Information Geometry for Wald Space}\label{scn:geometry}
% \subsection{Geometry and Curvature of Wald Space}

In  \cite{garba_information_2021} we equipped the space of phylogenetic forests with a metric induced from the metric of the \emph{Fisher-information} Riemannian metric $g$ on $\spd$ (see \Cref{scn:notation}), where the latter induces the metric $d_\spd$  on $\spd$. In this section we show,  first that this induced metric is compatible with the stratification structure of $\W$, and second that this turns $\W$ into a geodesic Riemann stratified space. 
\subsection{Induced Intrinsic Metric}

In \cite{garba_information_2021} we introduced a metric on $\cW$ induced from the geodesic distance metric $d_\spd$ of $\cP$ introduced in Section \ref{scn:notation}. Recalling also the definition of path length $L_\spd$ from Section \ref{scn:notation}, for two \walds $F,F'\in \W$, set
    \begin{equation*}
      d_{\W}(F,F') := \inf_{\substack{\gamma\colon[0,1]\to\W\\\phi\circ\gamma \text{ continuous in }\spd,\\\gamma(0)=F, \gamma(1)=F'}} L_{\spd}(\phi\circ\gamma)\,.
    \end{equation*}
This metric defines the \emph{induced intrinsic metric} topology on $\W$. While in general this topology may be finer than the one conveyed by making an embedding a homeomorphism, as the following example teaches, this is not the case for \waldspace.

\begin{example}
  \label{exa:intrinsic-topology-finer}
  Consider 
  $$ \cM := (\{1\}\times [0,1]) \quad\cup \bigcup_{y \in  \{1/n : n \in \NN\}\cup \{0\}} 
  [-1,1) \times \{y\} \,,$$
  an infinite union of half open intervals in $\RR^2$ connected vertically on the right. In the trace topology where the canonical embedding $\iota : \cM \hookrightarrow \RR^2$ is a homeomorphism, the sequence $q_n = (0,1/n)$ converges to $q=(0,0)$. For the induced intrinsic metric
  \begin{equation*}
    d_{\cM}(x, y) = \inf_{\substack{\gamma\colon[0,1]\to \cM\\\gamma \text{  continuous in } \RR^2,\\\gamma(0)=x, \gamma(1)=y}} L_{\RR^2}(\gamma)\,,
  \end{equation*}
  with the Euclidean length $L_{\RR^2}$, we have, however, $d_{W}(q_n, q) \geq  2$ for all $n\in \NN$.
  
\end{example}

\begin{theorem}
    The topology of $\W$ obtained from making $\phi$ a homeomorphism agrees with the topology induced from the induced intrinsic metric $d_\W$. In particular $d_\W$ turns $\W$ into a metric space.
\end{theorem}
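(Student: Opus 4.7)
The goal is to show that the two topologies on $\W$ — call them $\tau_\phi$ (the one making $\phi$ a homeomorphism onto its image) and $\tau_d$ (induced by $d_\W$) — coincide.

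The inclusion $\tau_\phi\subseteq\tau_d$ I would dispatch directly. For any admissible path $\gamma\colon[0,1]\to\W$ between $F$ and $F'$ with $\phi\circ\gamma$ continuous in $\spd$, one has $d_\spd(\phi(F),\phi(F'))\le L_\spd(\phi\circ\gamma)$; taking the infimum yields $d_\spd(\phi(F),\phi(F'))\le d_\W(F,F')$. Since $d_\spd$ generates the topology of $\spd$, this gives $\tau_\phi\subseteq\tau_d$ and, in particular, shows that $d_\W$ separates points, so it is a genuine metric.

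For the converse $\tau_d\subseteq\tau_\phi$, I would argue sequentially: assume $F_n\to F$ in $\tau_\phi$ and aim to show $d_\W(F_n,F)\to 0$. If this fails, extract a subsequence with $d_\W(F_{n_k},F)\ge\epsilon_0>0$. By \Cref{thm:wald-convergence-general} and compactness of $[0,1]^{|E|}$, after passing to a further subsequence I may assume the $F_{n_k}$ share a common topology $E$ and that the corresponding edge weights $\lambda^{(n_k)}\in(0,1)^E$ converge to some $\lambda^*\in[0,1]^E$ with $\bar\phi_E(\lambda^*)=\phi(F)$.

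The central construction is the linear homotopy $\gamma_k(t)\coloneqq(1-t)\lambda^{(n_k)}+t\lambda^*$ in $[0,1]^E$ together with its $\bar\phi_E$-image. Since $\phi(F)\in\spd$ is an interior point of $\sym$ and $\bar\phi_E$ is continuous, there exists a convex neighborhood $U$ of $\lambda^*$ in $[0,1]^E$ with $\bar\phi_E(U)\subset\spd$; by \Cref{thm:phi-spd-repr} and \Cref{cor:forests-closed-in-spd} this image automatically lies in $\phi(\W)$. For $k$ large, $\gamma_k([0,1])\subset U$, so $\phi^{-1}\circ\bar\phi_E\circ\gamma_k$ is an admissible path in $\W$ from $F_{n_k}$ to $F$. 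Because $\bar\phi_E$ is smooth on the compact cube $[0,1]^E$ and the affine invariant metric tensor $\langle\cdot,\cdot\rangle_P=\tr(P^{-1}\cdot P^{-1}\cdot)$ is locally equivalent to the Frobenius inner product on the compact subset $\bar\phi_E(\overline{U})\subset\spd$, I would bound $L_\spd(\bar\phi_E\circ\gamma_k)\le C\lVert\lambda^{(n_k)}-\lambda^*\rVert$ for a constant $C$ independent of $k$, which tends to zero and contradicts $d_\W(F_{n_k},F)\ge\epsilon_0$.

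The hard part is justifying that the straight-line homotopy stays inside $\phi(\W)$; this is exactly where the earlier structural work pays off. The characterization of $\phi(\W)$ as the closed subset of $\spd$ cut out by conditions \ref{item:diagonal-is-1}–\ref{item:entries-geq-0} together with openness of $\spd$ in $\sym$ guarantees that a full neighborhood of $\lambda^*$ in $[0,1]^E$ maps into $\phi(\W)$, so no delicate transversality analysis of crossing between strata is needed. Everything else reduces to smoothness of $\bar\phi_E$ on a compact cube and local equivalence of the Riemannian metric with the ambient Frobenius norm on compact subsets of $\spd$.
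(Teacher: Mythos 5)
Your proof is correct and takes essentially the same route as the paper: the inequality $d_\spd\leq d_\W$ gives one inclusion, and for the other both you and the paper push the straight-line homotopy $t\mapsto(1-t)\lambda^{(n)}+t\lambda^*$ in edge-weight coordinates through $\bar\phi_E$ and bound its $\spd$-length by $C\lVert\lambda^{(n)}-\lambda^*\rVert$, you via uniform equivalence of the affine-invariant metric with the Frobenius norm on compacta, the paper via the explicit geodesic-distance formula and a first-order expansion. Your neighbourhood-and-closedness argument for keeping the homotopy inside $\phi(\W)$ is sound but more than is needed: each coordinate of $(1-t)\lambda^{(n)}+t\lambda^*$ is a strict convex combination of a point in $(0,1)$ and a point in $[0,1]$, hence lies in $(0,1)$ for every $t<1$, so the path is automatically in the grove $\G{E}$ except at the endpoint $\lambda^*$, which maps into $\phi(\W)$ by hypothesis.
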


\begin{proof}
    By definition we have that $d_\W \geq d_\spd$, which implies that sequences that converge with respect to $d_\W$ also converge with respect to $d_\spd$.
    
    For the converse, assume that $\W\ni F_n \to F' \in \W$ w.r.t. $d_\spd$, as $n\to \infty$. Since there are only finitely many groves in $\W$ it suffices to show that $d_\W(F_n,F)\to 0$ for $F_n \in \G{E}$ and $F \in \overline{\G{E}}$ with a common grove $\G{E}$. Hence, we assume that $\bar\phi^{-1}_E\circ \phi(F_n) = \lambda_n \in  (0,1)^E$ and $\bar\phi^{-1}_E\circ\phi(F') = \lambda' \in [0,1]^E$ with $\lambda_n \to \lambda'$, due to Theorem \ref{thm:wald-convergence-general}. Then, with   $\delta(t) = t \lambda' + (1-t) \lambda_n $,
    $$\gamma: [0,1] \to \spd, t\mapsto \phi^{-1}\circ\bar\phi_E\circ\delta(t))=: \gamma(t) $$ 
    is a path in $\W$ connecting $\gamma(0) = F$ with $\gamma(1) = F_n$. For $k \in \NN$ and $j=1,\ldots,k$ we note that
    \begin{eqnarray*}
    \bar\phi_E \circ \delta_n\left(\frac{j}{k}\right) &=& \bar\phi_E \circ \delta_n\left(\frac{j-1}{k}\right) + (D\bar\phi_E) \circ \delta_n\left(\frac{j-1}{k}\right) \cdot \frac{\lambda'-\lambda_n}{k} + o\left(\frac{\|\lambda'-\lambda_n\|}{k}\right)
    \end{eqnarray*}
    where both terms 
    $$\bar\phi_E \circ \delta_n\left(\frac{j-1}{k}\right), (D\bar\phi_E) \circ \delta_n\left(\frac{j-1}{k}\right)  $$ 
    are bounded, also uniformly $n\in \NN$, due to Remark \ref{rmk:phi-analytic-cont}. In consequence, in conjunction with Section \ref{scn:notation},
    \begin{eqnarray*}
    \lefteqn{
    d_\W(F_n,F)}\\ &\leq& \lim_{k\to \infty} \sum_{j=1}^k d_\spd \left(\bar\phi_E \circ
    \delta\left(\frac{j-1}{k}\right), \bar\phi_E \circ \delta\left(\frac{j}{k}\right)\right)\\
    &=& \lim_{k\to \infty} \sum_{j=1}^k\left\| \log\left(\sqrt{\bar\phi_E \circ
    \delta\left(\frac{j-1}{k}\right)}^{-1}\bar\phi_E \circ
    \delta_n\left(\frac{j}{k}\right)\sqrt{\bar\phi_E \circ
    \delta_n\left(\frac{j-1}{k}\right)}^{-1} \right)\right\|\\
    &\leq& C \|\lambda'-\lambda_n\|
    \end{eqnarray*}
    with a constant $C>0$ independent of $n$. Letting $n\to \infty$ thus  yields the assertion.
    
\end{proof}

\subsection{Geodesic Space and Riemann Stratification}
Having established the equivalence between the stratification topology and that of the Fisher information metric, we longer distinguish between them.

\begin{theorem}\label{thm:waldspace-is-geodesic-metric-space}
    The \waldspace equipped with the information geometry is a geodesic metric space, i.e.~every two points in $(\W, d_\W)$ are connected by a minimising geodesic.
\end{theorem}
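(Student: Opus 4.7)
The plan is to apply the Hopf--Rinow / Cohn--Vossen theorem for length spaces, which asserts that every complete, locally compact length space is geodesic. By construction $(\W, d_\W)$ is a length space, so it suffices to verify completeness and local compactness and then to invoke this classical result.

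First, I would record the trivial inequality
\begin{equation*}
    d_\spd\bigl(\phi(F), \phi(F')\bigr) \leq d_\W(F, F')
    \qquad \text{for all } F, F' \in \W,
\end{equation*}
which holds because every admissible path in the definition of $d_\W$ maps under $\phi$ to a rectifiable path in $\spd$ of the same length, while $d_\spd$ is the infimum of $L_\spd$ over the strictly larger class of all rectifiable paths in $\spd$. In particular, every $d_\W$-Cauchy sequence is $d_\spd$-Cauchy.

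For completeness, let $(F_n) \subset \W$ be $d_\W$-Cauchy. Then $(\phi(F_n))$ is $d_\spd$-Cauchy in the Cartan--Hadamard manifold $\spd$, which is complete, hence converges to some $P \in \spd$. By \Cref{cor:forests-closed-in-spd}, $\phi(\W)$ is closed in $\spd$, so $P = \phi(F^*)$ for some $F^* \in \W$. The preceding theorem identifies the $d_\W$-topology on $\W$ with the topology inherited from the embedding $\phi$; therefore convergence of $\phi(F_n)$ to $\phi(F^*)$ in $\spd$ translates to $d_\W(F_n, F^*) \to 0$, establishing completeness. For local compactness, fix $F \in \W$ and $r > 0$. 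Since $\spd$ is complete Riemannian, Hopf--Rinow implies that the closed ball $\overline{B}_{d_\spd}(\phi(F), r) \subset \spd$ is compact; intersecting with the closed set $\phi(\W)$ yields a compact subset of $\spd$, and pulling back through the homeomorphism $\phi$ produces a compact neighborhood of $F$ in $(\W, d_\W)$.

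The main conceptual obstacle has already been overcome by the preceding theorem: without the coincidence of the intrinsic length-metric topology with the embedded topology, neither completeness nor local compactness could be transferred cleanly from $\spd$ to the stratified cube complex $\W$. Granted these two properties, Cohn--Vossen's theorem delivers a minimising path in $(\W, d_\W)$ between any two prescribed \walds, which is the asserted geodesic.
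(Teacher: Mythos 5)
Your proposal is correct and follows the same overall strategy as the paper: establish completeness and local compactness of $(\W, d_\W)$ and then invoke the Hopf--Rinow--Cohn--Vossen theorem for length spaces (Bridson--Haefliger, Prop.~I.3.7). The key difference lies in how completeness is obtained. The paper observes that $(\W, d_\spd)$ is complete as a closed subset of $\spd$, then cites \cite[Theorem~5.1]{garba_information_2021} (finiteness of path lengths between any two \walds) together with a result of Hu and Kirk on local contractions \citep[Corollary~on~p.123]{hu_local_1978} to transfer completeness to the intrinsic metric $d_\W$. You instead take a more direct and self-contained route: a $d_\W$-Cauchy sequence is $d_\spd$-Cauchy because $d_\spd \leq d_\W$, hence converges in the complete space $\spd$ to a point of the closed set $\phi(\W)$ by \Cref{cor:forests-closed-in-spd}; the topology-equivalence result just proven then promotes that $d_\spd$-limit to a $d_\W$-limit. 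Both arguments are sound, but yours makes fuller use of the topology equivalence theorem the paper just established and avoids the somewhat opaque external reference, so it is arguably cleaner. Your treatment of local compactness (pull back closed balls through the homeomorphism $\phi$, using closedness of $\phi(\W)$ and properness of the Hadamard manifold $\spd$) is also fine; note that the paper leaves this step slightly more implicit, relying on the topology equivalence to transfer local compactness from $(\W,d_\spd)$ to $(\W,d_\W)$. One small point worth stating explicitly in your write-up is that $d_\W$ is finite (so that $(\W,d_\W)$ really is a length metric space), which the paper secures via \cite[Theorem~5.1]{garba_information_2021} and which you absorb into the phrase ``by construction.''
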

  
\begin{proof}
    By \cite[p.325]{lang_fundamentals_1999}, $(\spd, g)$ is geodesically complete as a Riemannian manifold and thus by the Hopf-Rinow Theorem for Riemannian manifolds (among others, \cite[p.224]{lang_fundamentals_1999}), it follows that $(\spd, d_\spd)$ is complete and locally compact.
    By \Cref{cor:forests-closed-in-spd}, $\phi(\W)$ is a closed subset of the complete and locally compact metric space $\spd$ and so $(\phi(\W), d_\spd)$ itself is, and so is $(\W,d_\spd)$. 
    By \cite[Theorem~5.1]{garba_information_2021}, any two \walds in are connected by a continuous path of finite length in $(\W,d_\spd)$, which is complete, and thus applying \cite[Corollary on p.123]{hu_local_1978} yields that $(\W,d_\W)$ is complete.
    Applying the Hopf-Rinow Theorem for metric spaces \cite[p.35]{bridson_metric_1999} to $(\W,d_\W)$, the assertion holds.
\end{proof}

Following \citet[Section 10.6]{huckemann_statistical_2020}, extend the notion of a Whitney stratified space in \Cref{def:whitney-stratification} to the notion of a Riemann stratified space.
\begin{definition}
    A \emph{Riemann stratified space} is a Whitney stratified space $\mc{S}$ of type (A) such that each stratum $S_{i}$ is a $d_i$-dimensional Riemannian manifold with Riemannian metric $g^{i}$, respectively, if whenever a sequence $q_1,q_2,\dots\in S_{j}$ which converges to a point $p\in S_{i}$ (where, assume again that the sequence of tangent planes $T_{q_n} S_{j}$ converges to some $d_j$-dimensional plane $T$ as $n\to\infty$), then the Riemannian metric $g_{q_n}^{j}$ converges to some two form $g_p^*\colon T\otimes T\to\RR$ with $g_p^{i}\equiv g_p^*\res{T_pS_i \otimes T_pS_{i}}$.
\end{definition}

\begin{theorem}
    The \waldspace $\W$ equipped with the information geometry is a Riemann stratified space.
\end{theorem}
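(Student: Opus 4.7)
The plan is to leverage \Cref{thm:whitney} (already giving Whitney condition (A)) and to equip each stratum with the Riemannian metric that is the pullback of the ambient affine invariant metric on $\cP$ along the embedding $\phi_E$. Concretely, for a \waldtop $E$ with $|E|=i$, the smooth embedding $\phi_E\colon\G{E}\to\cP$ from \Cref{lem:grove-topology-eucl-phi-is-embedding} has injective derivative, so the pulled back metric
\[
    g^{i}_F(X,Y) := \tr\!\bigl(\phi(F)^{-1} (d\phi_E)_F(X)\,\phi(F)^{-1}(d\phi_E)_F(Y)\bigr),\quad X,Y\in T_F\G{E},
\]
is a bona-fide Riemannian metric on $\G{E}$. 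Summing over groves of the same dimension defines $g^i$ on $S_i$. Under the identification $T_F\G{E}\cong T_{\phi(F)}\phi_E(\G{E})\subset\sym$ used throughout \Cref{scn:whitney}, the metric $g^i_F$ is simply the restriction of the ambient affine invariant inner product $\langle\cdot,\cdot\rangle_{\phi(F)}$ on $\sym$ to the tangent subspace.

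Next I would verify the compatibility condition on metrics. Let $q_1,q_2,\dots\in S_j$ converge to $p\in S_i$, and assume $T_{q_n}S_j\to T$ in the Grassmannian $G(\dim\sym,d_j)$. Define the candidate limiting bilinear form by
\[
    g_p^*(X,Y) := \tr\!\bigl(\phi(p)^{-1}\,X\,\phi(p)^{-1}\,Y\bigr),\qquad X,Y\in T.
\]
This is well defined since $T\subset\sym=T_{\phi(p)}\cP$ and $\phi(p)\in\cP$ by \Cref{thm:phi-spd-repr}. The key point is that both $P\mapsto P^{-1}$ and the trace pairing are smooth on $\cP$, while the topology on $\W$ coincides with the one induced by $\phi$. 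Hence $\phi(q_n)\to\phi(p)$ in $\cP$ implies $\phi(q_n)^{-1}\to\phi(p)^{-1}$ in $\sym$. Picking representatives $V_n\in S(\dim\sym,d_j)$ of $T_{q_n}S_j$ and $V\in S(\dim\sym,d_j)$ of $T$ with $V_nG_n\to V$ in the sense of \Cref{def:conv-Grassmannian}, the Gram matrices
\[
    \bigl(g^j_{q_n}\bigr)_{ab} = \tr\!\bigl(\phi(q_n)^{-1}(V_nG_n)_a\phi(q_n)^{-1}(V_nG_n)_b\bigr)
\]
converge entry-wise to $\tr(\phi(p)^{-1}V_a\phi(p)^{-1}V_b) = (g_p^*)_{ab}$, which is precisely convergence of $g^j_{q_n}$ to $g_p^*$.

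Finally, I would invoke Whitney (A) to obtain compatibility on the smaller stratum. By \Cref{thm:whitney} we have $T_pS_i\subseteq T$, so the restriction $g_p^*|_{T_pS_i\otimes T_pS_i}$ is well defined. Because $g_p^*$ was constructed as the restriction of the ambient affine invariant inner product at $\phi(p)$ to $T$, and $g_p^i$ is by definition the restriction of the same ambient inner product to $T_pS_i$, the identity $g_p^i\equiv g_p^*|_{T_pS_i\otimes T_pS_i}$ is automatic. This is the entire content of the Riemann stratified space axiom beyond what is already guaranteed by \Cref{thm:whitney} and \Cref{thm:waldspace-is-geodesic-metric-space}.

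The main (and only) delicate step is the Grassmannian convergence argument in the second paragraph: one must be careful that the chosen frames $V_n$ for $T_{q_n}S_j$ may degenerate in $\RR^{\dim\sym\times d_j}$ and only converge after a reparameterization $G_n\in S(d_j,d_j)$, as highlighted in \Cref{rmk:Grassmann}. This is exactly the same issue handled in the proof of \Cref{lem:limits-of-subspaces}, and the same remedy applies here: since the affine invariant inner product is bilinear, any rescaling $G_n$ of the frame transforms the Gram matrix by $G_n^\top(\cdot)G_n$, which converges to its limit by continuity of matrix multiplication, and then the cluster analysis of \Cref{rmk:Grassmann}, part~1, ensures limiting frames remain in the Stiefel manifold. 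Everything else is routine continuity of the affine invariant Riemannian metric tensor on $\cP$.
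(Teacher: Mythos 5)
Your proof is correct and takes essentially the same approach as the paper: the paper's one-line proof simply observes that because each stratum carries the restriction of a single ambient Riemannian metric $g$ on $\cP$, the compatibility condition is automatic, and your writeup just unpacks the details (continuity of $P\mapsto P^{-1}$ and the trace pairing, Grassmannian convergence of frames, and Whitney (A) giving $T_pS_i\subseteq T$ so the restriction is meaningful). Nothing is gained or lost; you have merely made explicit what the paper declares ``follows immediately.''
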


\begin{proof}
    As we impose the Riemannian metric $g$ from $\spd$ onto all of $\phi(\W)\subset\spd$, the assertion follows immediately.
\end{proof}
\begin{example}[Geometry of \waldspace for $N = 2$]\label{exa:waldspace-2-geometry-analytical}
    For $N = 2$, $L = \{1,2\}$, there is one edge $e = 1\vert 2$, and two different topologies, namely 
    \begin{equation*}
        E = \{1\vert2\}\qquad\text{ and }\qquad E' = \emptyset.
    \end{equation*}
    The corresponding \groves are then $\phi(\G{E'}) = \{I\}$, where $I$ is the $2\times 2$ unit matrix, and $\G{E}\cong(0,1)$ such that $\W = \G{E} \sqcup \G{E'} \cong (0,1]$. Using $\lambda\in(0,1)$ for the only edge $e = 1\vert 2$ we have
    \begin{align*}
        \phi(\G{E}) = \bigg\{
        \phi_{E}(\lambda) =
        \begin{pmatrix}
            1 & 1 - \lambda \\
            1 - \lambda & 1
        \end{pmatrix}
        \colon \lambda\in(0,1) 
        \bigg\},
    \end{align*}
    Thus, with the definition of $d_\spd$ in Section \ref{scn:notation}, the distance between two phylogenetic forests $F_1=\bar\phi_{E}(\lambda_1)$, $F_2=\bar\phi_{E}(\lambda_2)$ with $\lambda_1,\lambda_2\in(0,1]$ can be calculated as
    \begin{align*}
      d_{\W}\big(F_1,F_2\big) 
      = \Bigg\vert &\ln\Bigg(\frac{1 - \lambda_2 + \frac{1}{\sqrt{2}}p(\lambda_2)}{1 - \lambda_1 + \frac{1}{\sqrt{2}}p(\lambda_1)}\Bigg)\\
      &+ \frac{1}{2\sqrt{2}}\ln\Bigg(\frac{\big(p(\lambda_1) + (1 - \lambda_1)\big)^2 - 1}
      {\big(p(\lambda_1) - (1 - \lambda_1)\big)^2 - 1}\,\cdot\,\frac{\big(p(\lambda_2) - (1 - \lambda_2)\big)^2 - 1}
      {\big(p(\lambda_2) + (1 - \lambda_2)\big)^2 - 1}\Bigg)\Bigg\vert ,
    \end{align*}
    where $p(x) = \sqrt{2}\sqrt{(1 - x)^2 + 1}$ for $x\in[0,1]$. Fig.~\ref{fig:waldspace-2-distance-to-forest-analytical} (right) depicts  the distance as a function of $\lambda_1,\lambda_2$.
    We obtain the distance to the disconnected forest $F_\infty = \phi^{-1}\bar\phi_E(1)$:
    \begin{align*}
      d_{\W}\big(F_1, F_\infty\big)  = \bigg\vert \frac{1}{2\sqrt{2}}\ln\bigg(\frac{(p(\lambda_1) + (1 - \lambda_1))^2 - 1}{(p(\lambda_1) - (1 - \lambda_1))^2 - 1}\bigg) - \ln\Big(1 - \lambda_1 + \frac{1}{\sqrt{2}}p(\lambda_1)\Big)\bigg\vert .
    \end{align*}
%     Here, $\lambda_1\to 0$ corresponds to $F_1$ approaching the infinitely far away boundary of $\cW$.
    This distance is depicted (as a function in $\lambda_1$) in Fig.~\ref{fig:waldspace-2-distance-to-forest-analytical} (left).
\end{example}

\begin{figure}[ht]% original file name: waldspace-2-dims-distance_to_inf, waldspace-2-dims-distance
    \centering
    \begin{minipage}{0.39\linewidth}
      \fbox{
        \includegraphics[width=1.0\linewidth, trim=0.5cm 0.0cm 1.0cm 0.0cm, clip]{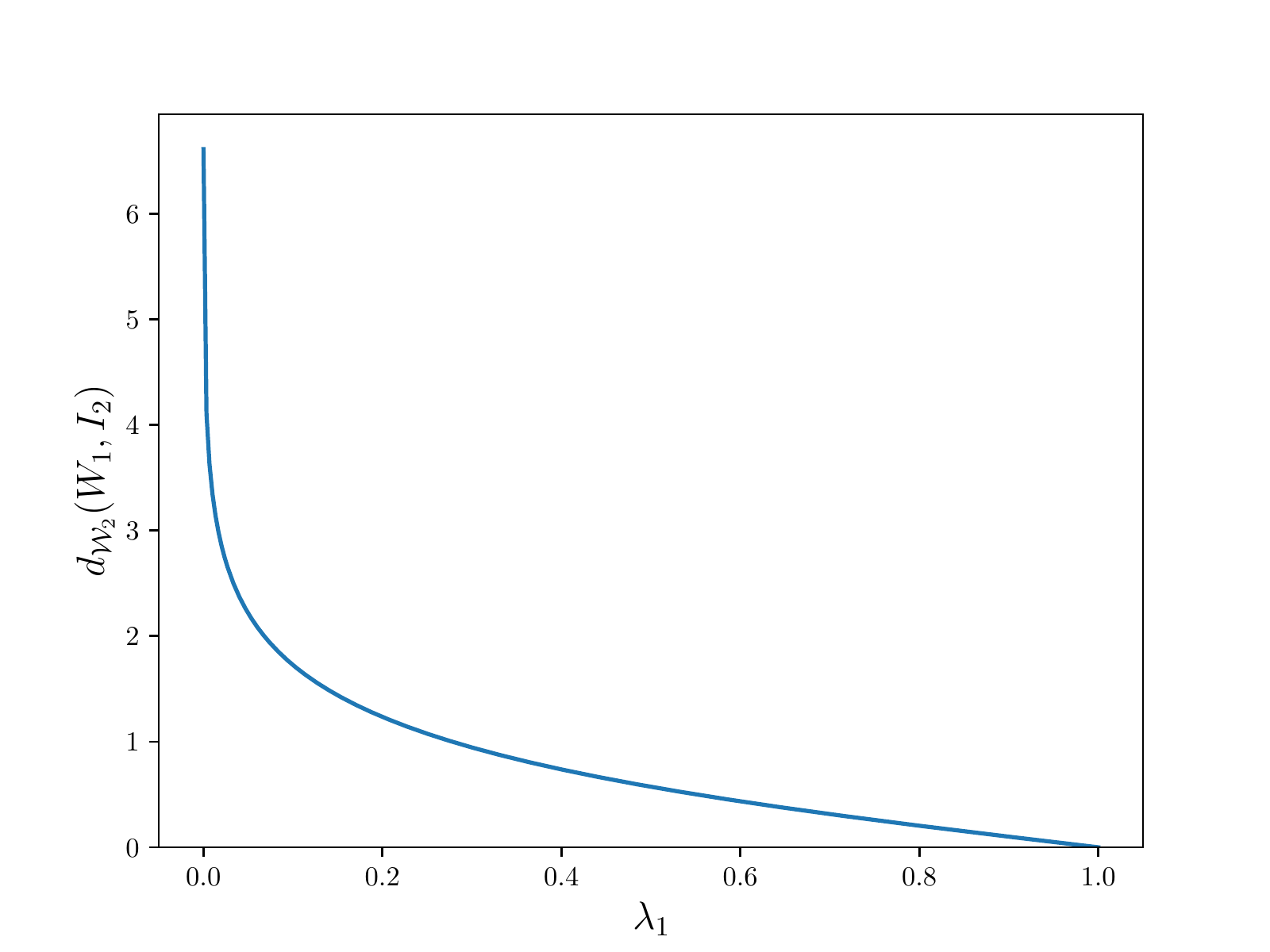}
      }
    \end{minipage}\hfil
    \begin{minipage}{0.39\linewidth}
      \fbox{
        \includegraphics[width=1.0\linewidth, trim=0.5cm 0.0cm 1.0cm 0.0cm, clip]{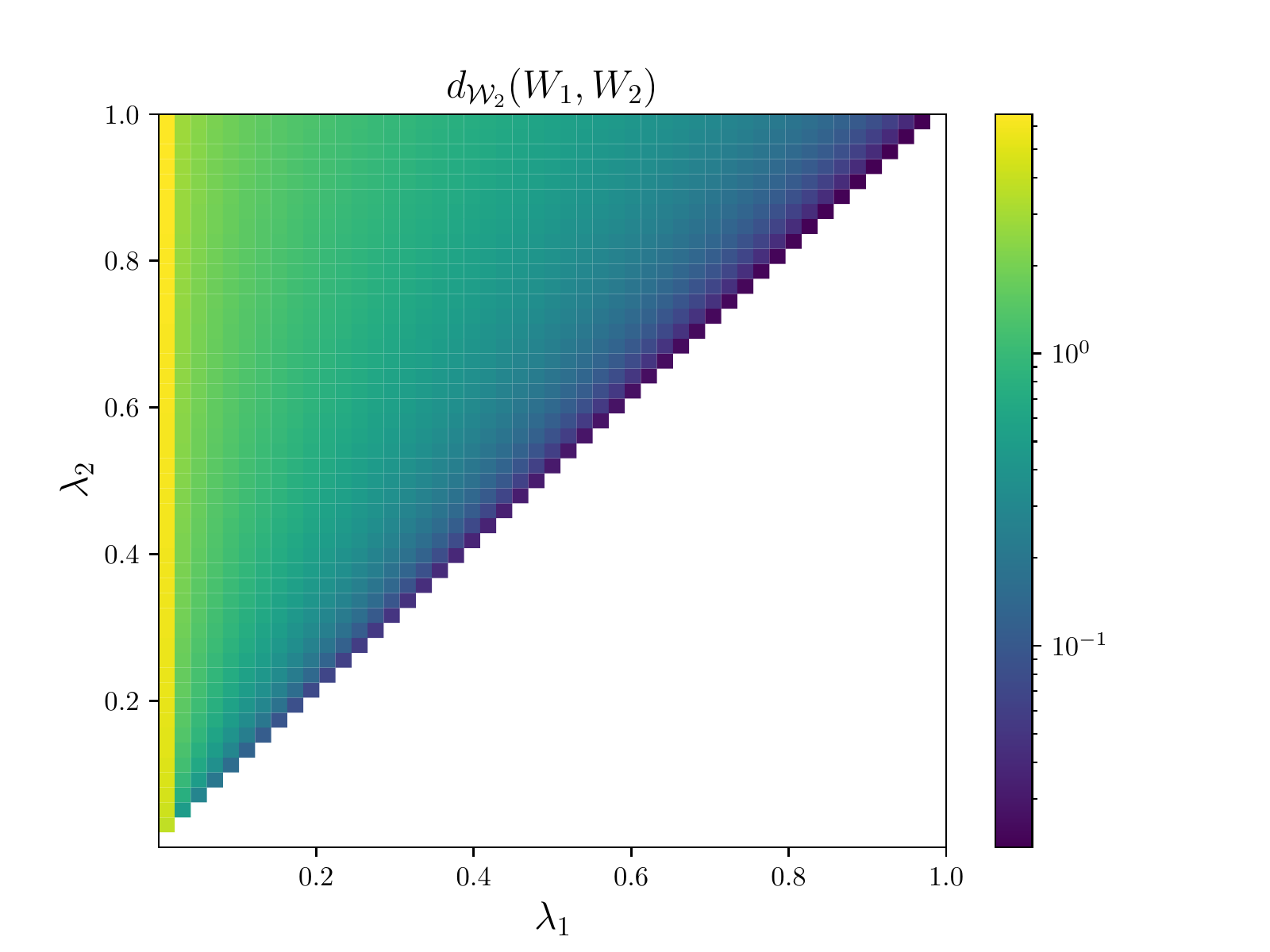}
      }
    \end{minipage}%
    \caption{Depicting the distance $d_{\W}$ of an arbitrary \wald, to the disconnected forest $F_\infty$ as a function of $\lambda_1$ (left) and between any two other \walds in $\W$, as a function of $(\lambda_1,\lambda_2)$ (right) as detailed in \Cref{exa:waldspace-2-geometry-analytical}.}
    \label{fig:waldspace-2-distance-to-forest-analytical}
\end{figure}

\section{Numerical Exploration of Wald Space}\label{scn:numerical}
In this section we propose a new algorithm to approximate geodesics between two fully resolved trees $F_1$ and $F_2$, that is a mixture of the \emph{successive projection} algorithm and the \emph{extrinsic path straightening} algorithm from \cite{lueg_wald_2021}. Using this algorithms allows to explore curvature and so-called \emph{stickiness} of Fr\'echet means.

\subsection{Approximating Geodesics in Wald Space}
From the ambient geometry of $\spd$, recalling the notation from Section \ref{scn:notation}, we employ the globally defined Riemannian exponential $\Exp$ and logarithm $\Log$ at $P\in\spd$, with $Q\in\spd$, $X\in T_P\spd$, as well as points on the unique (if $P\neq Q$) geodesic $\gamma_{P,Q}$ in $\spd$ comprising $P$ and $Q$. Further, $\exp$ and $\log$ denote the matrix exponential and logarithm, respectively:
\begin{align*}
    \Exp_P& \colon T_P\spd\to\spd,& &X \mapsto %\Exp_P(X) = 
    \sqrt{P} \exp\big(\sqrt{P}^{-1} X \sqrt{P}^{-1}\big) \sqrt{P}, \\
    \Log_P& \colon\spd\to T_P\spd,& &Q \mapsto %\Log_P(Q) = 
    \sqrt{P} \log\big(\sqrt{P}^{-1} Q \sqrt{P}^{-1}\big) \sqrt{P}, \\
    \gamma_{P,Q}^{(\spd)}& \colon[0,1]\to\spd,& &t\mapsto %\gamma_{P,Q}(t) =
    \Exp_P\big(t\,\Log_P(Q)\big),% = \sqrt{P} \big(\sqrt{P}^{-1} Q \sqrt{P}^{-1}\big)^t \sqrt{P},
\end{align*}
Furthermore, for a forest $F\in\W$ with topology $E$, denote the orthogonal projection from the tangent space $T_P\spd$ at $P = \phi(F)$ onto the tangent space of the sub-manifold $T_P\phi_E(\G{E})$, as a subspace of $T_P\spd$, with
\begin{equation*}
    \pi_P \colon T_P\spd\to T_P\phi_E(\G{E}).
\end{equation*}
This projection is computed using an orthonormal basis of $T_P\phi_E(\G{E})$ obtained from applying Gram-Schmidt to the basis
\begin{equation*}
    \bigg\{\frac{\dd\phi_{E}}{\dd\lambda_e}(\lambda) \colon e\in E\bigg\}.
\end{equation*}
 of $T_P\phi_E(\G{E})$. Finally, we make use of the projection 
\begin{equation*}
    \pi\colon \spd\to\W,\qquad P\mapsto \argmin_{F\in\W} d_{\spd}\big(P,\phi(F)\big),
\end{equation*}
where $\pi$ is well-defined for $P\in\spd$ close enough to $\phi(\W)$.
The following algorithm is similar to the \emph{extrinsic path straightening} algorithm from \cite{lueg_wald_2021}, which has been inspired by \cite{schmidt_shape_2006}. 
It starts with generating a discrete curve using the \emph{successive projection} algorithm from \cite{lueg_wald_2021} and then iteratively straightening it and adding more points in between the points of the discrete curve.
To keep notation simple, we omit $\phi$ and identify a forest $F\in\W$ with its matrix representation $\phi(F)$.

\begin{definition}[Geodesic Approximation Algorithm]\label{def:geodesic-approximation-algorithm}
Let $5\leq n_0\in\NN$ be the odd number of points in the \emph{initial path}, $I \in\NN$ the number of \emph{extensions iterations} and $J\in\NN$ the number of \emph{straightening iterations}  of the path.
\begin{description}
    \item[Input] $F,F'\in\W$ 
    \item[Initial path]  Set $F_1\coloneqq F$, $F_{n_0}\coloneqq F'$, then, for $i = 2,\dots,(n_0-3)/2$, compute
            \begin{align*}
                F_i &= \pi\bigg(\gamma_{F_{i-1},F_{n_0 - i + 2}}\Big(\frac{1}{n_0 - 2i + 3}\Big)\bigg),\\
                F_{n_0 - i +1} &= \pi\bigg(\gamma_{F_{n_0 - i+2},F_{i-1}}\Big(\frac{1}{n_0 - 2i + 3}\Big)\bigg),\\
            \end{align*}
            and, with $F_{(n_0-1)/2} \coloneqq \pi(\gamma_{F_{(n_0-3)/2},F_{(n_0+1)/2}}(0.5))$, set the \emph{current discrete path} to
            \begin{equation*}
                \Gamma \coloneqq \big(F_1,\dots,F_{(n_0-3)/2},F_{(n_0-1)/2}, F_{(n_0+1)/2}, \dots, F_{n_0}\big).
            \end{equation*}
                   \item[Iteratively extend and straighten:] Do $I$ times:
            \begin{description}
                \item[Extend]  With the current discrete path $\Gamma = \big(F_0,\dots,F_{n}\big)$, for $i = 0,\dots,n-1$ compute $G_i \coloneqq \pi\big(\gamma_{F_{i},F_{i+1}}(0.5)\big)$ and define the new current discrete path
                    \begin{equation*}
                        \Gamma \coloneqq \big(F_0,G_0,F_1,G_1,\dots,F_{n-1},G_{n-1},F_n\big).
                    \end{equation*}
                \end{description}
                Do $J$ times:
             \begin{description}
               \item[Straighten] With the current discrete path $\Gamma = \big(F_0,\dots,F_n\big)$,
                for $i = 2,\dots,n-1$, compute $X_i = \frac{1}{2}\big(\Log_{F_i}(F_{i-1}) + \big(\Log_{F_i}(F_{i+1})\big)$, update $F_i\coloneqq \pi\big(\Exp_{F_i}^{(\spd)}(X_i)\big)$, and define the new current discrete path
                    \begin{equation*}
                        \Gamma \coloneqq \big(F_0,F_1,\dots,F_n\big).
                    \end{equation*}
                \end{description}
    \item[Return] The current discrete path  $\Gamma$, which is a discrete approximation of the geodesic between $F$ and $F'$ with $2^I(n_0 - 1) + 1$ points.
\end{description}

\end{definition}

While \Cref{thm:waldspace-is-geodesic-metric-space} guarantees the existence of a shortest path between any $F,F'\in\W$, it may not be unique, and it is not certain whether the path found by the algorithm is near a shortest path or represents just a local approximation.

To better assess the quality of the approximation  $\Gamma = (F_0,\ldots,F_n)$ found by the algorithm, \cite{rumpf2015variational} propose considering its energy,
\begin{equation*}
    E(\Gamma) = \frac{1}{2}\sum_{i = 0}^{n-1} d_{\spd}(F_i, F_{i+1})^2\,,
\end{equation*}
yielding a means of comparison for discrete paths with equal number of points.

%\Comment{why is the energy important? I suggest to drop it or make a stronger case, e.g. give $E(\Gamma)$ numbers for the \waldspace and BHV geodesics below}

\begin{example}[Geodesics in \waldspace for $N = 3$]\label{exa:waldspace-3-geodesics-over-zero-boundary} 
Revisiting $\W$ from \Cref{exa:boundary-of-grove-n3} with unique top-dimensional grove $\G{E}\cong(0,1)^3$, 
we approximate a shortest path between the two phylogenetic forests $F_1,F_2\in\F$ with $\phi(F_1) = \phi_{E}(\lambda^{(1)})$ and $\phi(F_2) = \phi_{E}(\lambda^{(2)})$ using the algorithm from \Cref{def:geodesic-approximation-algorithm}, where
%\Comment{state the parameters, also depict the trees and also three trees along the two geodesics}
    \begin{equation*}
        \lambda^{(1)} = (0.1, 0.9, 0.07) \qquad \text{ and } \qquad \lambda^{(2)} = (0.3, 0.1, 0.9).
    \end{equation*}
    This path is depicted in \Cref{fig:waldspace-3-shortest-path-boundary}, as well as the BHV space geodesic (which is a straight line with respect to the $\ell$-parametrization from \Cref{def:wald-graph-representation}), first in the coordinates $\lambda\in(0,1)^3$ and second embedded into $\spd$ viewed as $\RR^3$, cf.~\Cref{fig:forests-n3-tetrahedron}.
    In contrast to the BHV geometry, the shortest path in the \waldspace geometry sojourns on the two-dimensional boundary, where the coordinate $\lambda_1$ is zero for some time. 
The end points $\lambda^{(1)},\lambda^{(2)},$ are trees that show a high level of disagreement over the location of taxon 1, but a similar divergence between taxon 2 and taxon 3. 
The section of the approximate geodesic with $\lambda_1=0$ represents trees on which the overall divergence between taxon 1 and the other two taxa is reduced.
In this way, the conflicting information in the end points is resolved by reducing the divergence (and hence increasing the correlation) between taxon 1 and the other two taxa, in comparison to the BHV geodesic which has $\lambda_1>0$ along its length. 
\end{example}

\begin{figure}[ht]% original names where geodesic_3d_65_points, geodesic_3d_emb_65_points
    \centering
    \begin{minipage}{0.39\linewidth}
      \fbox{
        \includegraphics[width=\linewidth, trim=1.cm 0cm 1.cm 0cm, clip]{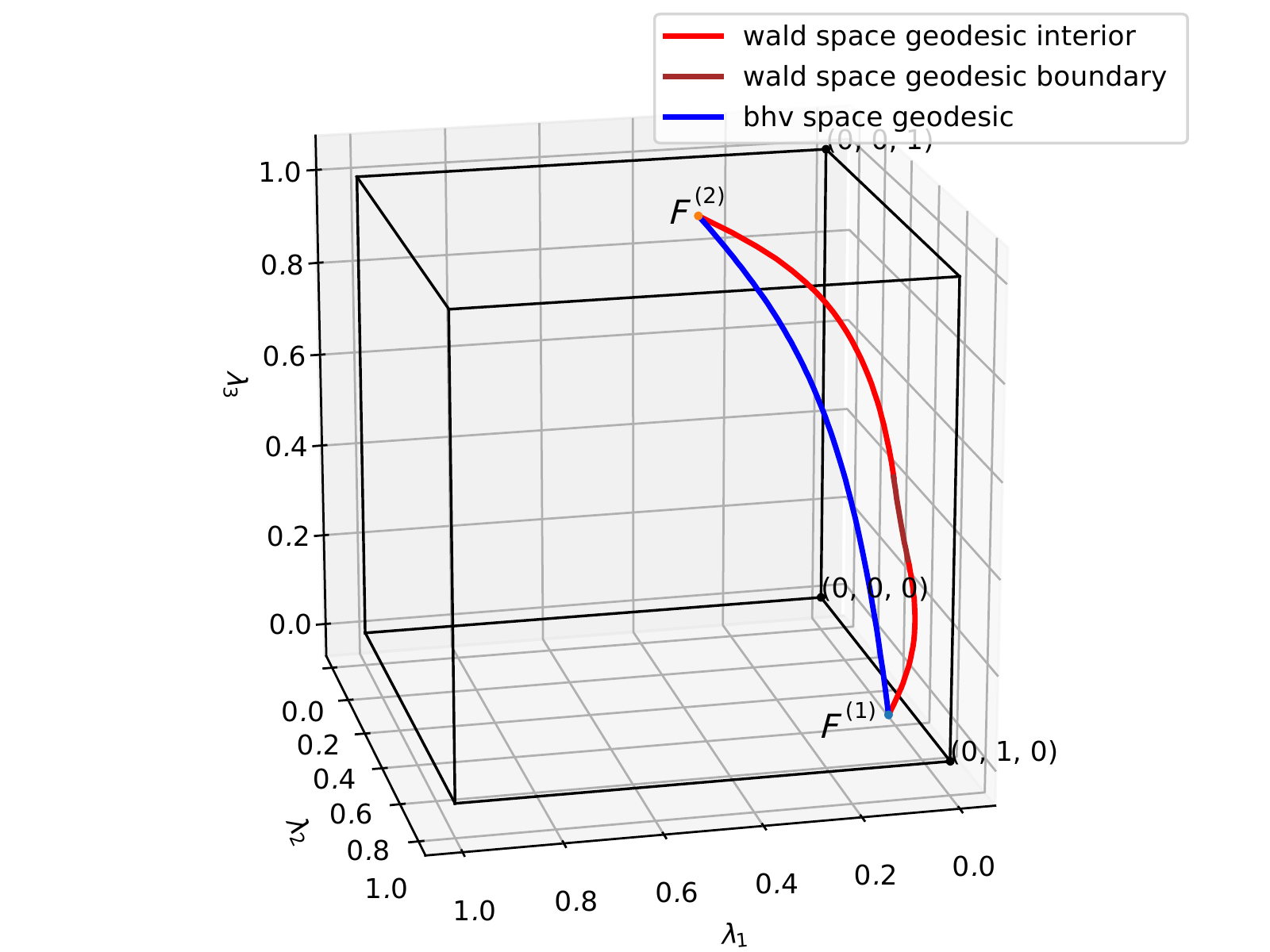}
      }
    \end{minipage}\hfil
    \begin{minipage}{0.39\linewidth}
      \fbox{
        \includegraphics[width=\linewidth, trim=1.cm 0cm 1.cm 0cm, clip]{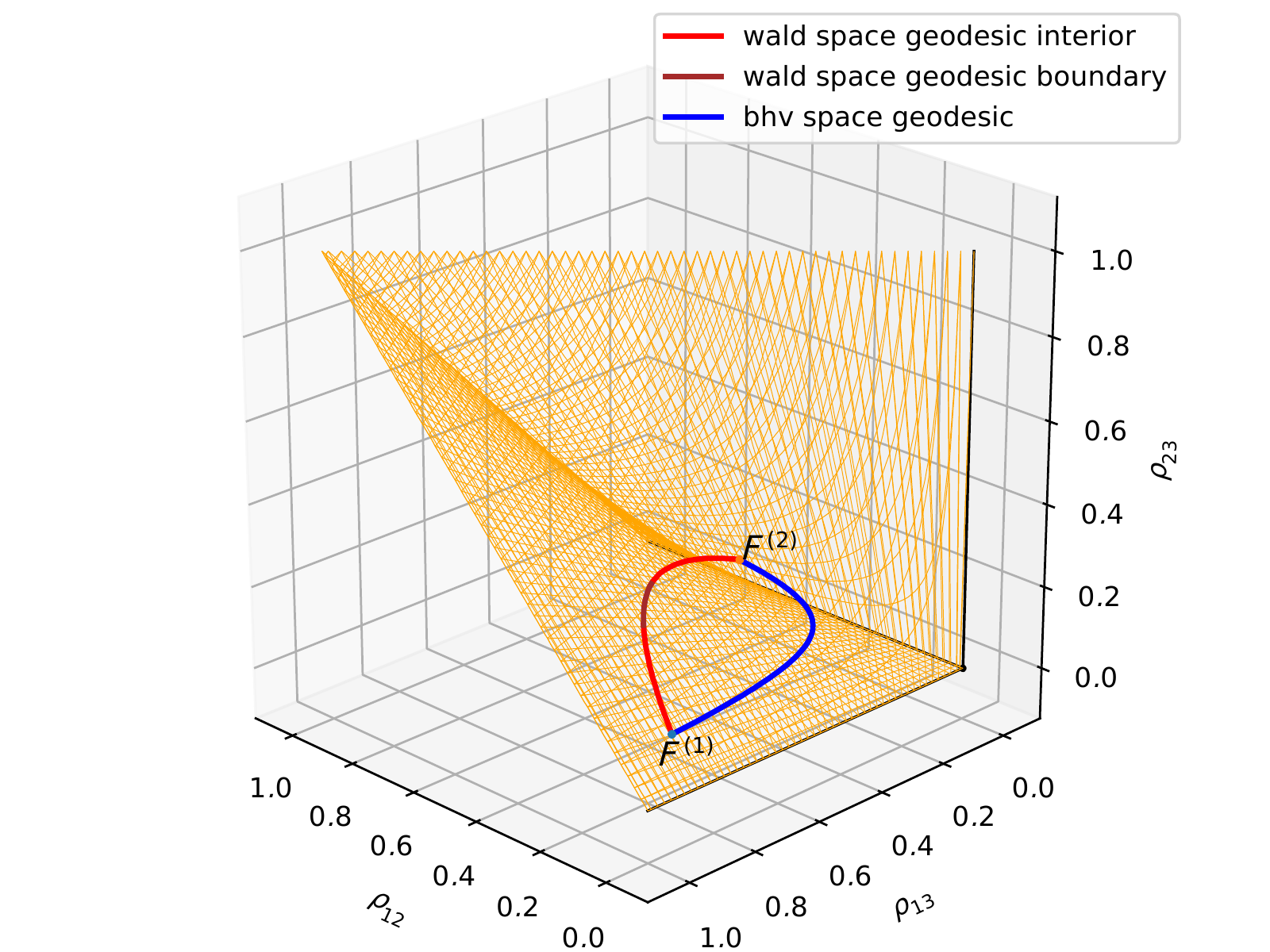}
      }
    \end{minipage}%
    \caption{
     The \waldspace geodesic (red) between fully resolved phylogenetic forests $F_1,F_2 \in \W$ ($N = 3$) sojourns on the boundary (brown). The image of the BHV space geodesic (blue)  remains in the grove as discussed in \Cref{exa:waldspace-3-geodesics-over-zero-boundary}. In $\lambda$-representation (left) and  embedded in $\spd$ viewed as $\RR^3$ (right, cf.~\Cref{fig:forests-n3-tetrahedron}).
    }
    \label{fig:waldspace-3-shortest-path-boundary}
\end{figure}

\subsection{Exploring Curvature of Wald Space} 
Since curvature computations involving higher order tensors are heavy on indices, we keep notation as simple as possible in the following by indexing splits in $E$ by  
 $$h,i,j,k,m,s,t\in E\,.$$
The concepts of transformation of metric tensors, Christoffel symbols and curvature employed in the following can be found in any standard text book on differential geometry, e.g. \cite{lang_fundamentals_1999, lee_introduction_2018}.

 Recall that the Riemannian structure  of \waldspace is inherited on each grove $\G{E} \cong (0,1)^E$ from the  information geometric Riemann structure of $\spd$ pulled back from $\phi_E\colon(0,1)^E \to \spd$. 
 In consequence, the Riemannian metric tensor $g_\lambda^{(\G{E})}$ of $\G{E}$, evaluated at $\lambda \in (0,1)^E$, is given by the Riemannian metric tensor $g_\lambda^{\spd}$ at $\phi_E(\lambda) = P$, where base vectors transform under the derivative of $\phi_E$: 
   \begin{equation*}
    g^{(\G{E})}_{\lambda}(x, y) = \sum_{i\in E}\sum_{j\in E} x_i y_j\; g^{(\spd)}_P\bigg(\frac{\dd\phi_{E}}{\dd\lambda_i}(\lambda),\,\frac{\dd\phi_{E}}{\dd\lambda_j}(\lambda)\bigg)
  \end{equation*}
  for $(x,y) \in T_\lambda\G{E}\times T_\lambda\G{E} \cong \RR^E\times \RR^E$ and 
  \begin{equation*}
    (\dr\phi_{E})_\lambda\big(T_\lambda\G{E}\big)
    = \vspan\bigg\{\frac{\dd\phi_{E}}{\dd\lambda_i}(\lambda)\colon i\in E\bigg\}
    \subseteq T_P\spd.
  \end{equation*}

  As usual $(g_{ij})_{i,j\in E}$ denotes the matrix of $g^{(\G{E})}_\lambda$ in standard coordinates and $(g^{ij})_{i,j\in E}$ its inverse.  This yields the \emph{Christoffel symbols} 
%(called symbols in contrast to tensors as they do not naturally transform under coordinate changes) 
for $i,j,m\in E$,
  \begin{align*}
    \label{eq:christoffel_symbols_computed}
    \Gamma_{ij}^m
    = \frac{1}{2} \sum_{k\in E} \bigg(\frac{\dd g_{jk}}{\dd \lambda_i}
    + \frac{\dd g_{ki}}{\dd \lambda_j}
    - \frac{\dd g_{ij}}{\dd \lambda_k}\bigg) g^{km},
  \end{align*}
  which give the representation of the \emph{curvature tensor}
  \begin{align*}
    R_{ijks} = \sum_{t\in E} \bigg(
    \sum_{h\in E} \Gamma_{ik}^h\Gamma_{jh}^t 
    - \sum_{h\in E} \Gamma_{jk}^h\Gamma_{ih}^t 
    + \frac{\dd}{\dd\lambda_j} \Gamma_{ik}^t - \frac{\dd}{\dd\lambda_i} \Gamma_{jk}^t
    \bigg)g_{ts}
  \end{align*}
  in the coordinates $i,j,k,s\in E$.
  
  Introducing the notation ($P=\phi_E(\lambda)$)
  \begin{equation*}
    Q_i = 
    P^{-1}\,\frac{\dd\phi_E}{\dd\lambda_i}(\lambda)
    \qquad\text{and}\qquad
    Q_{ij} = P^{-1}\frac{\dd^2\phi_E}{\dd\lambda_i\dd\lambda_j}(\lambda)
  \end{equation*}
  and performing a longer calculation in coordinates $i,j \in E$, gives
  \begin{align*}
    R_{ijij}
    = \frac{1}{4} &\sum_{a,h\in E} g^{ah} 
    \Tr\Big[\big(2 Q_{ij} - Q_j Q_i - Q_i Q_j\big) Q_a\Big]
    \Tr\Big[\big(2 Q_{ij} - Q_j Q_i - Q_i Q_j\big) Q_h\Big]\nonumber\\
    - &\sum_{a,h\in E} g^{ah} 
    \Tr\Big[Q_i^2 Q_a\Big]
    \Tr\Big[Q_j^2 Q_h\Big]\nonumber\\
    - &\Tr\Big[\big(2 Q_{ij} - Q_j Q_i - Q_i Q_j\big)Q_{ij}\Big].
  \end{align*}
  Evaluating the sectional curvature tensor at a pair of tangent vectors $x,y\in T_\lambda\G{E}\cong\RR^E$ at $\lambda$ gives the \emph{sectional curvature} $K(x,y)$ at $\lambda$ of the local two-dimensional subspace spanned by geodesics with initial directions 
  generated by linear combinations of 
  $x$ and $y$. Abbreviating  
%  Using this, for two vectors $x,y\in T_\lambda\G{E}\cong\RR^E$, the sectional curvature of their spanned tangent plane is given by (where 
$\vert x\vert_\lambda^{(\G{E})} \coloneqq (g_\lambda^{(\G{E})}(x, x))^{1/2}$ we have 
  \begin{equation*}
    K(x, y) = \frac{\sum_{i\in E} \sum_{j\in E} x_i y_i R_{ijij}}{\vert x\vert_\lambda^{(\G{E})} \vert y\vert_\lambda^{(\G{E})} - g^{(\G{E})}_\lambda(x, y)}.
  \end{equation*}

  \begin{example}
    \label{exa:sectional-curvatures-examples}
    Again revisiting $\W$ from \Cref{exa:boundary-of-grove-n3} with $N=3$, we first consider \walds in the unique top-dimensional grove $\G{E}\cong(0,1)^3$, and then on its boundary.
    \begin{enumerate}
      \item 
        We compute minimum and maximum sectional curvatures at the \walds $F$ with $\lambda = (a,a,a)$, for $a\in(0,1)$, as displayed in  \Cref{fig:sectional-curvatures-diagonal}.
        Traversing along $0<a <1$ we find both positive and negative sectional curvatures and their extremes escape to positive and negative infinity as the vantage point, the isolated forest $F_\infty$, is approached, where all dimensions collapse.
      \item 
        In order to assess \emph{Alexandrov curvature} that measures ``fatness/slimless'' of geodesic triangles (hence it does not require a Riemannian structure, a geodesic space suffices, see \cite{sturm_probability_2003}) we compute several geodesic triangles and their respective angle sums within $\W$.
        The corners of the triangles are \walds $F_1,F_2,F_3\in\W$, where $\{i,j,k\} = \{1,2,3\}$, $E_i := \{j\vert k\}$ and $\lambda_{j\vert k} \in (0,1]$.         \Cref{fig:waldspace-3-geodesic-triangles-angle-sums} depicts the geodesic triangles (left panel) non isometrically embedded in $\RR^3$ representing the off-diagonals in $\spd$, as well as their respective angle sums (right panel). 
        When the two connected leaves approach one another ($\lambda_e\approx 0$) triangles become infinitely thin, but near $F_\infty$ ($\lambda_e\approx 1$) the triangles become Euclidean. 
%        measured with respect to the ambient space embedding $\phi(\W)\hookrightarrow\spd$. 
        %are depicted as usual using the embedding into $\spd$ in \Cref{fig:waldspace-3-geodesic-triangles-angle-sums}.
    \end{enumerate}

  \end{example}
  
  \begin{figure}[ht]
    \centering
    \includegraphics[width=0.9\linewidth, trim=0cm 5.2cm 0cm 0.5cm, clip]{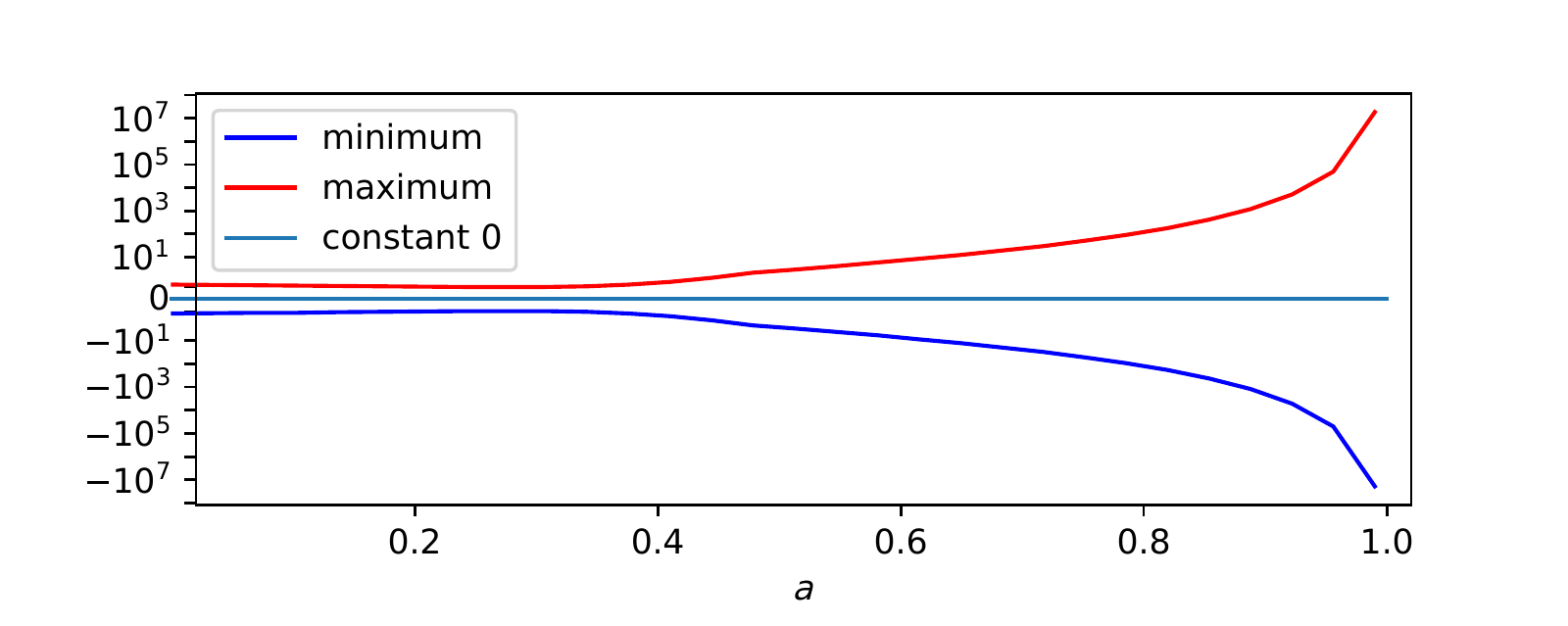}
    \caption{Minimum and maximum sectional curvatures along $0< a < 1$ of  \waldspace ($N=3$) at $F \in \W$ with $\lambda=(a,a,a)$, as described in \Cref{exa:sectional-curvatures-examples}.
    }
    \label{fig:sectional-curvatures-diagonal}
  \end{figure}

  \begin{figure}[ht] % original names where triangles_embedded, triangles_angle_sums
    \centering
    \begin{minipage}{0.435\linewidth}
        \includegraphics[width=\linewidth, trim=1.cm 0cm 1.cm 0cm, clip]{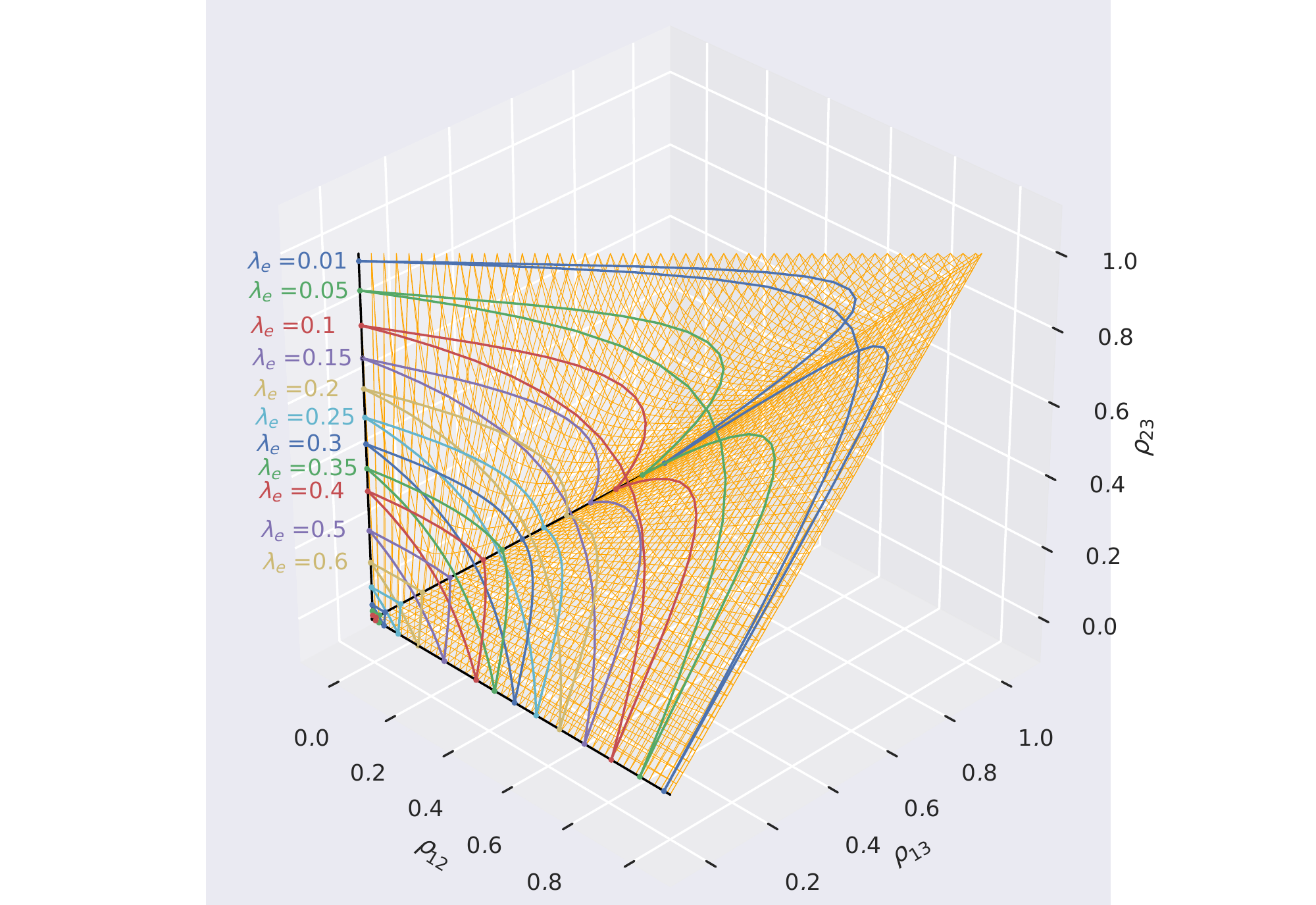}
    \end{minipage}\hfil
    \begin{minipage}{0.435\linewidth}
        \includegraphics[width=\linewidth, trim=1.cm 0cm 1.cm 0cm, clip]{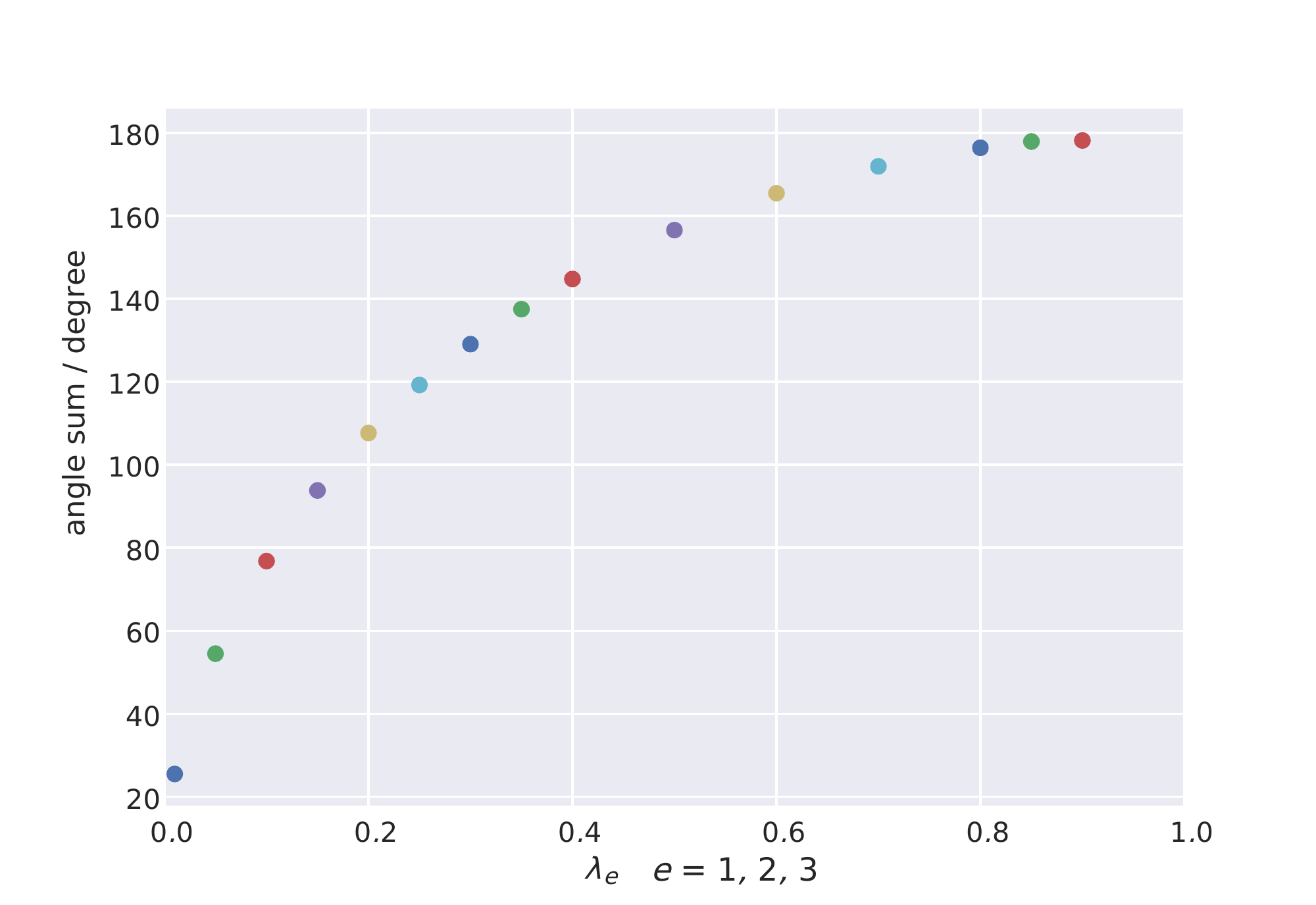}
    \end{minipage}%
    \caption{Displaying sums of angles in degrees  (right) of geodesic triangles spanned by three \walds for $N=3$ with one disconnected leaf and edge weight $0<\lambda_e<1$ between the other two leaves as discussed in \Cref{exa:sectional-curvatures-examples}. Embedding $\W$ in $\spd$ viewed (non isometrically) as $\RR^3$, the geodesic triangles are visualized on the left, where the origin corresponds to $\lambda_e=1$.
    }
    \label{fig:waldspace-3-geodesic-triangles-angle-sums}
  \end{figure}

\begin{conjecture}
% \begin{remark}
        This example hints towards a general situation:
        \begin{itemize}
            \item[(i)] 
            \Waldspace groves feature positive and negative sectional curvatures alike, both of which become unbounded when approaching the vantage point $F_\infty$. 
            \item[(ii)] When approaching the  infinitely far away boundary of $\spd$ from within $\W$, some Alexandrov curvatures tend to negative infinity.
        \end{itemize}
\end{conjecture}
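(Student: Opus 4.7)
My plan is to attack the two parts of the conjecture via asymptotic analysis of the explicit sectional curvature formula derived just before the conjecture, applied to families of walds and tangent directions adapted to the relevant boundary strata of \waldspace.

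For part (i), I would first fix a grove $\G{E}$ and parametrize the approach to $F_\infty$ by the scaling $\lambda_e = 1-\eps$ (and subsequently the more general $\lambda_e = 1-\eps_e$) with $\eps \to 0^+$. In this regime $P = \phi_E(\lambda) \to I$, while each matrix $\dd\phi_E/\dd\lambda_i$ carries, in row $u$, column $v$, a factor $\prod_{\tilde{e}\in E(u,v),\,\tilde{e}\neq i}(1-\lambda_{\tilde{e}})$ that vanishes in a structured way. Substituting these controlled asymptotics into $Q_i$, $Q_{ij}$, the metric tensor $g_{ij}$ (degree two in the $Q$'s), its inverse $g^{ij}$, and the formula for $R_{ijij}$ (degree six) allows one to isolate the leading $\eps$-order of $K(x,y)$. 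The target is then to exhibit, for every grove of positive dimension, a tangent direction pair $(x^+, y^+)$ whose leading coefficient is strictly positive, and a pair $(x^-, y^-)$ whose leading coefficient is strictly negative, each with magnitude diverging as $\eps\to 0$. Such directions should exist because $\phi_E(\G{E})\subset\spd$ is not totally geodesic, so its second fundamental form is nontrivial, forcing both signs to appear in the Gauss equation that relates intrinsic and extrinsic curvatures.

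For part (ii), I would construct explicit families of geodesic triangles $(F_1^{(k)}, F_2^{(k)}, F_3^{(k)})$ whose pairwise $d_\W$-distances stay bounded below while all three vertices converge to a common point on the infinitely far away boundary $\{\rho_{uv}=1\}$ of $\spd$. Since such a limit point is a singular matrix that cannot be an interior point of $\spd$, at least one interior angle of the approximating triangles must shrink to zero, so the angle sum cannot stay bounded away from $0$ and no upper Alexandrov curvature bound can hold in any neighborhood of the limit. For $N=3$ the numerical evidence in Fig.~\ref{fig:waldspace-3-geodesic-triangles-angle-sums} can be promoted to a rigorous argument via the closed-form distance of \Cref{exa:waldspace-2-geometry-analytical} together with explicit computation of the initial velocities of the three sides at a common vertex. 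For larger $N$, an analogous family can be obtained by restricting to three-leaf subtrees, extending the topology arbitrarily on the remaining leaves, and exploiting monotonicity of angles under isometric embeddings of geodesic subtriangles.

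The main obstacle in part (i) is a clean identification of tangent directions yielding both signs in general: while the unboundedness of curvature follows from a degree count of $\eps$ in the metric versus in the curvature tensor, the simultaneous existence of both signs hinges on the structure of the second fundamental form of $\phi_E(\G{E}) \hookrightarrow \spd$, which has no obvious closed form. For part (ii), the difficulty is that the approximate geodesic algorithm of \Cref{def:geodesic-approximation-algorithm} only returns locally shortest paths, so numerical experiments cannot directly certify global Alexandrov bounds; the proof must therefore work with ambient $\spd$-geodesics projected onto $\phi(\W)$ and control the projection error near the boundary, which complicates the triangle comparison arguments.
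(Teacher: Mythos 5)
This statement is a \emph{conjecture} in the paper, not a theorem: the authors present only numerical evidence (the sectional-curvature plot in Fig.~\ref{fig:sectional-curvatures-diagonal} and the geodesic-triangle angle sums in Fig.~\ref{fig:waldspace-3-geodesic-triangles-angle-sums}, both for $N=3$) and explicitly list the curvature question among the open problems in \Cref{scn:discussion}. There is therefore no proof in the paper to compare your proposal against, and the outline you give has to be judged on its own merits --- where, as you partly acknowledge yourself, it does not yet close the gap.

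For part~(i), degree counting in $\eps$ can plausibly show that $K(x,y)$ is unbounded along $\lambda_e=1-\eps$, but it cannot by itself produce \emph{both} signs. Recall that $\spd$ with the affine invariant metric is Cartan--Hadamard, so the ambient sectional curvature is everywhere $\leq 0$; by the Gauss equation, positive intrinsic curvature in $\G{E}$ can only come from the term $\langle \mathrm{II}(X,X),\mathrm{II}(Y,Y)\rangle - \|\mathrm{II}(X,Y)\|^2$ overcoming the nonpositive ambient contribution. A nontrivial second fundamental form is \emph{necessary} for this but in no way \emph{forces} positive curvatures, and it certainly does not force unbounded negative ones as well: you would need to exhibit explicit limiting directions and track the leading-order cancellations in the $R_{ijij}$ formula. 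You name this obstacle, but without resolving it the claim remains a conjecture. For part~(ii), the logic ``angle sum $\to 0$, hence no \emph{upper} Alexandrov curvature bound can hold'' is backwards. Arbitrarily thin geodesic triangles (vanishing angle sum) are consistent with $\mathrm{CAT}(\kappa)$ for arbitrarily negative $\kappa$, i.e.\ upper bounds survive; what fails is a curvature bound from \emph{below}. What the conjecture asserts, and what your construction should aim at, is that the local infimum of admissible upper bounds $\kappa$ tends to $-\infty$ as the boundary is approached. Finally, your concern about the projection algorithm is valid and underscores that the $N=3$ figures cannot be ``promoted'' to a proof without either exact geodesics on a tractable subfamily (the $N=2$ formula from \Cref{exa:waldspace-2-geometry-analytical} alone does not suffice for genuine triangles) or quantitative control of the projection error near $\partial\spd$, neither of which your proposal supplies.
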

% \end{remark}

 \subsection{Exploring Stickiness in Wald Space} 

Statistical applications in tree space often require the concept of a mean or average tree. 
Since the expectation of a random variable taking values in a non-Euclidean metric space $(M,d)$ is not well-defined, \cite{F48} proposed to resort instead to a minimizer of expected squared distance to a random element $X$ in $M$,
$$p^* \in \argmin_{p\in M}\EE[d(p,X)^2]\,$$
called a \emph{barycenter} or \emph{Fr\'echet mean}. 
In a Euclidean geometry, if existent, the Fr\'echet mean is unique and identical to the expected value of $X$. 
Given a sample $X_1,\ldots,X_n \iid X$, measurable selections from the set
$$ \argmin_{p\in M} \mc{F}(p),\quad \mc{F}(p):=\frac{1}{n}\sum_{j=1}^n d(p,X_j)^2$$
are called \emph{empirical Fr\'echet means} and their asymptotic fluctuations allow for nonparametric statistics. Usually,  $\mc{F}$ is called the \emph{empirical Fr\'echet function}.

Recently, it has been discovered by \cite{HH15,EltznereHuckemann2019} that positive curvatures may increase asymptotic fluctuation by orders of magnitude, and by \cite{hotz_sticky_2013,huckemann_sticky_2015} that infinite negative Alexandrov curvature may completely cancel asymptotic fluctuation, putting a dead end to this approach of non-Euclidean statistics. In particular, this can be the case for BHV spaces, cf. \cite{barden_central_2013,barden_limiting_2016,barden_logarithm_2018}.

  \begin{example}[Stickiness in \waldspace]\label{exa:stickiness}
    Consider two samples $F_1,F_2,F_3\in\W$ and $F_1',F_2,F_3\in\W$ with $N=4$, 
     depicted in \Cref{fig:waldspace-4-frechet-mean-samples}, where $F_1$ and $F_1'$ only differ by weights of their interior edges. By symmetry, their Fr\'echet means are of form $F$ having equal but unknown pendent edge weights $0<\lambda_{pen}<1$ and unknown interior edge weights $0\leq \lambda_{int}< 1$, as in \Cref{fig:waldspace-4-frechet-mean-samples}. It turns out that the Fr\'echet means of both samples agree in BHV with $\lambda_{int} =0$, i.e. the empirical mean \emph{sticks} to the lower dimensional \emph{star tree stratum} (featuring only pendant edges).
     
     In contrast, the two empirical Fr\'echet functions in \waldspace
     \begin{eqnarray*}
     \mc{F}(F) &=& \frac{1}{3}\left(d_\W(F,F_1)^2 + d_\W(F,F_2)^2 +d_\W(F,F_3)^2\right)\\
     \mc{F}'(F) &=& \frac{1}{3}\left(d_\W(F,F'_1)^2 + d_\W(F,F_2)^2 +d_\W(F,F_3)^2\right)
     \end{eqnarray*}
     have different minimizers, and, in particular the minimizer for $\mc{F}'$ does not stick to the star stratum but has $\lambda_{int}>0$. \Cref{fig:waldspace-4-frechet-mean-heat-plots} illustrates the values of $\mc{F}$ and $\mc{F}'$ for different values of the parameters $\lambda_\mr{pen},\lambda_\mr{int}$ of $F$ near the respective minima.
      \end{example}

  \begin{figure}[ht]% original file name: waldspace-2-dims-distance_to_inf, waldspace-2-dims-distance
    \centering
    \begin{minipage}{0.8\linewidth}
      \includegraphics[width=1.0\linewidth]{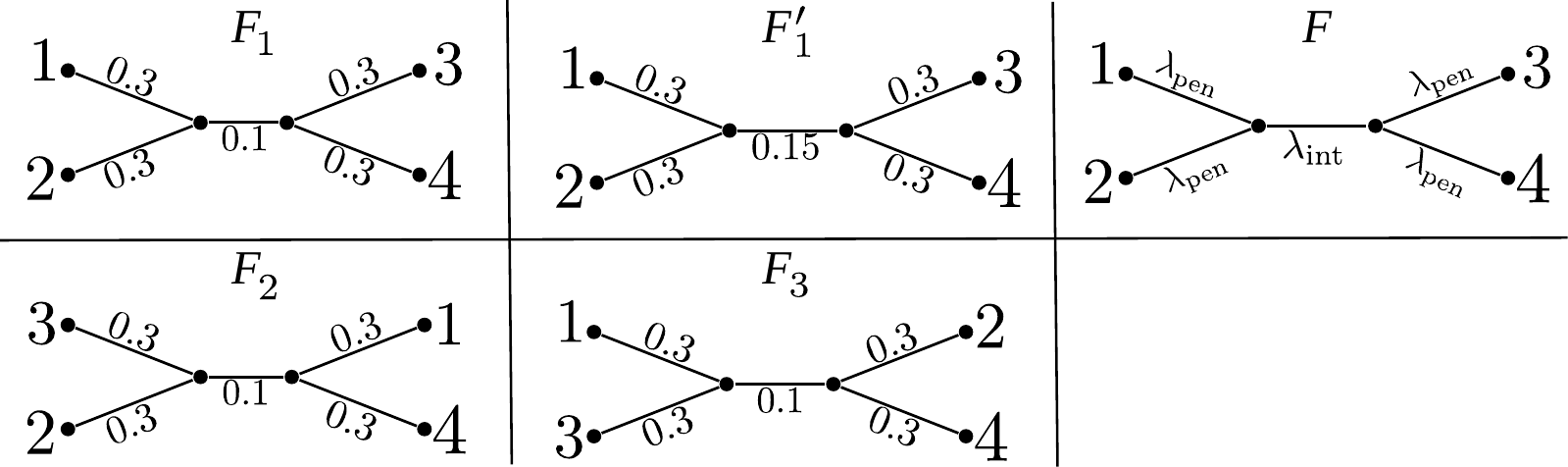}
    \end{minipage}
      \caption{Two samples of \walds: $F_1,F_2,F_3$ and $F_1',F_2,F_3$ where
      $F_1$ and $F_1'$ only differ by weights of their interior edges. By symmetry, $F$ is a candidate for each Fr\'echet mean, see \Cref{exa:stickiness}.
      } 
      \label{fig:waldspace-4-frechet-mean-samples}
  \end{figure}
  
  \begin{figure}[ht]% original file name: waldspace-2-dims-distance_to_inf, waldspace-2-dims-distance
    \centering
    \begin{minipage}{0.45\linewidth}
        \includegraphics[width=1.0\linewidth]{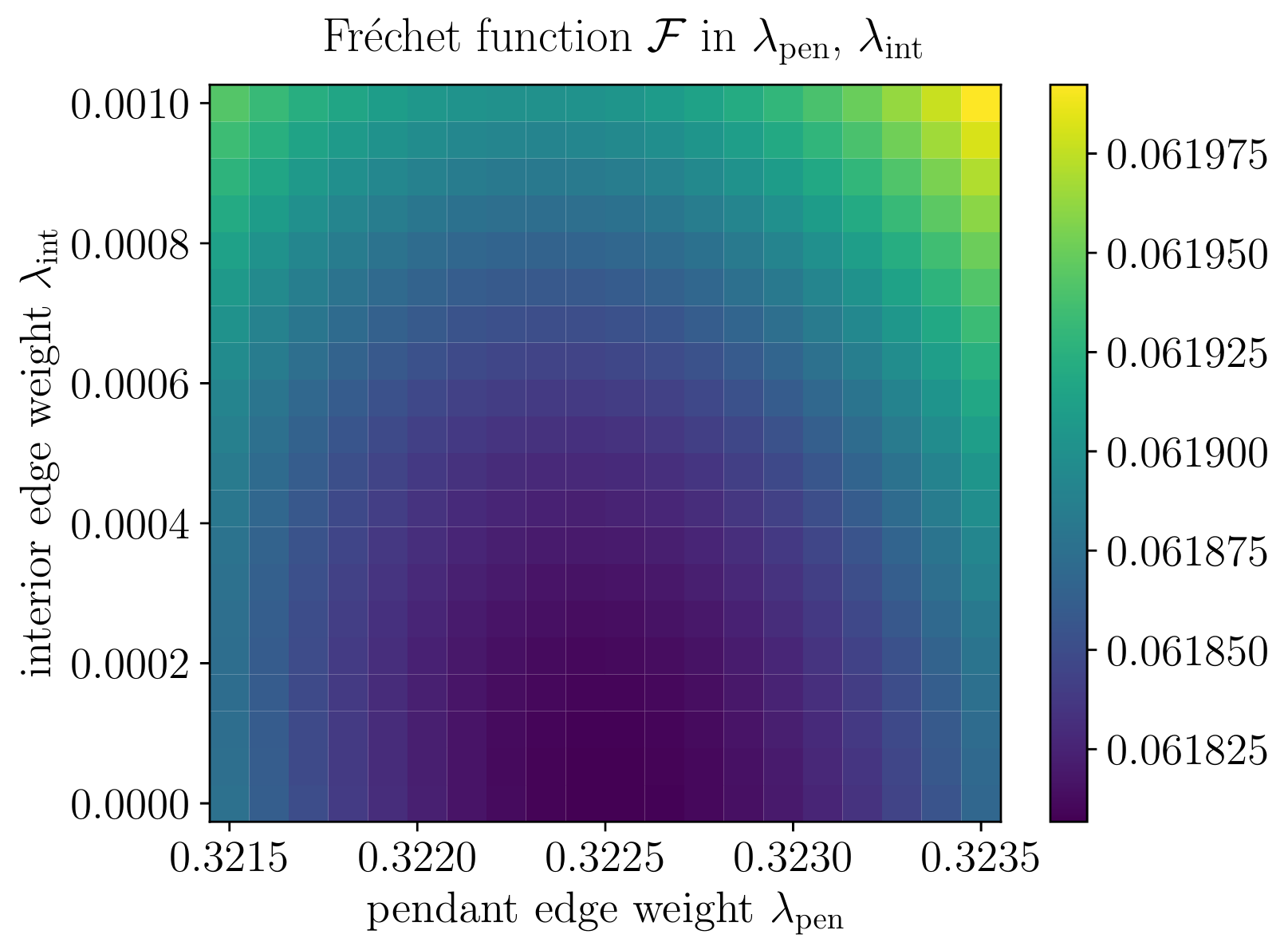}
    \end{minipage}\hfil
    \begin{minipage}{0.45\linewidth}
        \includegraphics[width=1.0\linewidth]{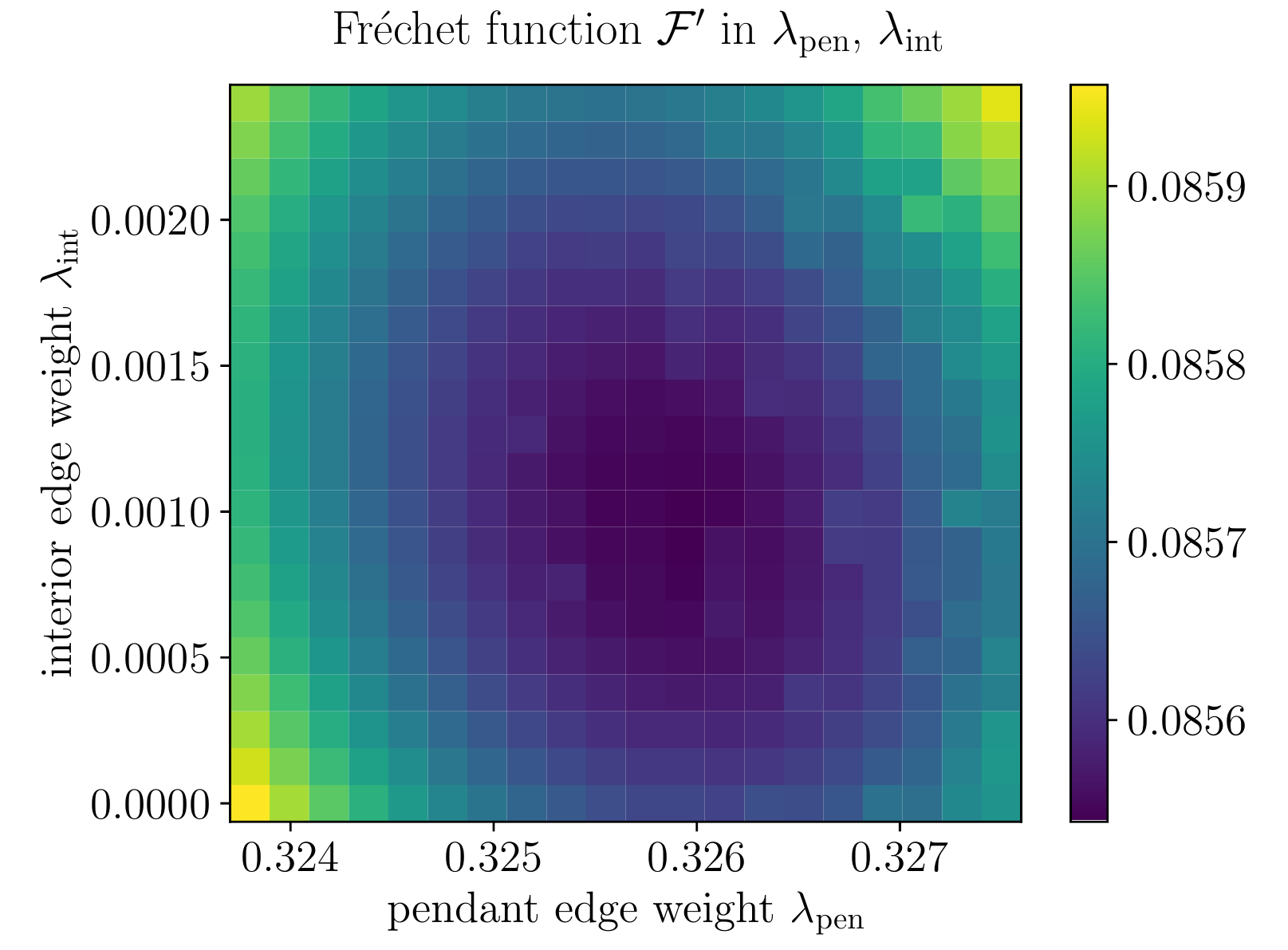}
    \end{minipage}%
      \caption{Heat map for the values of the Fr\'echet functions $\mc{F}$ (left) and $\mc{F}'$ (right) of two samples as functions of  $\lambda_\mr{pen},\lambda_\mr{int}$ determining candidate minimiziers $F$ as detailed in \Cref{exa:stickiness} and illustrated in \Cref{fig:waldspace-4-frechet-mean-samples}.
      }
      \label{fig:waldspace-4-frechet-mean-heat-plots}
  \end{figure}

  \begin{remark}
          This preliminary research indicates that effects of stickiness, which are still expected where ``too many'' lower dimensional strata hit higher dimensional strata, are less severe in \waldspace than in BHV space, thus making \waldspace more attractive for asymptotic statistics based on Fr\'echet means.
  \end{remark}
  
\section{Discussion}\label{scn:discussion}

In previous work \citep{garba_information_2021}, the wald space was introduced as a space for statistical analysis of phylogenetic trees, based on assumptions with a stronger biological motivation than existing spaces. 
In that work, the focus was primarily on geometry, whereas here we have provided a rigorous characterization of the toplogy of wald space. 
Specifically, wald space $\W$ is a disjoint union of open cubes with the Euclidean toplogy, and as topological subspaces we have 
$$\mathcal{BHV}_{N-1}\subset\W\subset\mc{E}_N\,$$
with the BHV space $\mathcal{BHV}_{N-1}$ from \cite{billera_geometry_2001} and the edge-product space $\mc{E}_N$ from \cite{moulton_peeling_2004}.
We have shown that this topology is the same as that induced by the information metric $d_\W$ defined in \cite{garba_information_2021}.
Furthermore, we have shown $\W$ is contractible, and so does not contain holes or handles of any kind. 
Examples suggest that $\W$ is a truncated cone in some sense  (see Figure~\ref{fig:cone-waldspace-slices}), but its precise formulation remains an open problem. 
%might be a topological cone (see Figure~\ref{fig:cone-waldspace-slices}), but this remains an open problem. 
As established in Theorem~\ref{thm:whitney}, boundaries between strata in wald space satisfy Whitney condition (A); whether Whitney condition (B) holds is an open problem, although we expect it to hold on the boundaries of any grove $(0,1)^E\cong\G{E}$ corresponding to the limit as one or more coordinates $\lambda_e\rightarrow 0$ (i.e. the boundaries between strata in $\mathcal{BHV}_{N-1}$). 
Our key geometrical result is that with the metric $d_\W$, wald space is a geodesic metric space, Theorem~\ref{thm:waldspace-is-geodesic-metric-space}. 
The existence of geodesics greatly enhances the potential of wald space as a home for statistical analysis.  

The approximate geodesics computed via the algorithm in Definition~\ref{def:geodesic-approximation-algorithm} provide insight into the geometry and a source of conjectures. 
For example, unlike geodesics in BHV tree space, it appears that geodesics in wald space can run for a proportion of their length along grove boundaries, even when the end points are within the interior of the same grove (see Example~\ref{exa:waldspace-3-geodesics-over-zero-boundary}). 
If wald space is uniquely geodesic (so that there is a unique geodesic between any given pair of points), its potential as a home for statistical analysis would be improved further. 
However, the presence of positive and negative sectional curvatures for different pairs of tangent vectors at the same point, and an apparent lack of global bounds on these, suggests geodesics may be non-unique, or at least makes proving uniqueness more challenging. 
Finally, Example~\ref{exa:stickiness} which involves approximate calculation of Fr\'echet means, suggests that wald space is less `sticky' than BHV tree space and hence more attractive for studying asymptotic statistics. 
 
A variety of open problems remain, and we make the following conjectures. 
\begin{enumerate}
\item All points on any geodesic between two trees are also trees. 
\item Geodesics between trees in the same grove do not leave the closure of that grove.
\item The disconnected forest $F_\infty$ %$I_N$ 
is repulsive, in the sense that the only geodesics passing through the disconnect forest have an end point there. 
\end{enumerate}
Other open problems include the following, all mentioned elsewhere in the paper. 
\begin{enumerate}
\setcounter{enumi}{3}
\item Is wald space a truncated topological cone?
\item Does Whitney condition (B) hold at grove boundaries?
\item Most importantly for statistical applications, is wald space uniquely geodesic or 
can examples of exact non-unique geodesics be constructed? What is then the structure of cut loci?
\end{enumerate}

\section*{Acknowledgements}

The authors gratefully acknowledge helpful discussions with Fernando Galaz-Garc\'ia and support from the DFG RTG 2088 (J. Lueg), and from the DFG HU 1575-7 and the Niedersachsen Vorab of the Volkswagen Foundation (S.F. Huckemann).

\bibliographystyle{apa}

\begin{thebibliography}{}

\bibitem[\protect\astroncite{Ardila and Klivans}{2006}]{ardila2006bergman}
Ardila, F. and Klivans, C.~J. (2006).
\newblock The {B}ergman complex of a matroid and phylogenetic trees.
\newblock {\em J. Comb. Theory B}, 96(1):38--49.

\bibitem[\protect\astroncite{Baez and Otter}{2015}]{baez}
Baez, J.~C. and Otter, N. (2015).
\newblock Operads and phylogenetic trees.
\newblock arXiv:1512.03337.

\bibitem[\protect\astroncite{Barden and Le}{2018}]{barden_logarithm_2018}
Barden, D. and Le, H. (2018).
\newblock The logarithm map, its limits and {Fr\'echet} means in orthant
  spaces.
\newblock {\em P. Lond. Math. Soc.}, 117(4):751--789.

\bibitem[\protect\astroncite{Barden et~al.}{2013}]{barden_central_2013}
Barden, D., Le, H., and Owen, M. (2013).
\newblock Central limit theorems for {Fr\'echet} means in the space of
  phylogenetic trees.
\newblock {\em Electron. J. Probab.}, 18:1--25.

\bibitem[\protect\astroncite{Barden et~al.}{2018}]{barden_limiting_2016}
Barden, D., Le, H., and Owen, M. (2018).
\newblock Limiting behaviour of {Fr\'echet} means in the space of phylogenetic
  trees.
\newblock {\em Ann. I. Stat. Math.}, 70(1):99--129.

\bibitem[\protect\astroncite{{Ba\v{c}\'ak}}{2014}]{bacak_computing_2014}
{Ba\v{c}\'ak}, M. (2014).
\newblock Computing medians and means in {Hadamard} spaces.
\newblock {\em SIAM J. Optimiz.}, 24(3):1542--1566.

\bibitem[\protect\astroncite{Billera et~al.}{2001}]{billera_geometry_2001}
Billera, L.~J., Holmes, S.~P., and Vogtmann, K. (2001).
\newblock Geometry of the space of phylogenetic trees.
\newblock {\em Adv. Appl. Math.}, 27(4):733--767.

\bibitem[\protect\astroncite{Bridson and Haefliger}{1999}]{bridson_metric_1999}
Bridson, M.~R. and Haefliger, A. (1999).
\newblock {\em Metric Spaces of Non-Positive Curvature}, volume 319 of {\em
  Grundlehren der mathematischen {Wissenschaften}}.
\newblock Springer, Berlin, Heidelberg.

\bibitem[\protect\astroncite{Brown and Owen}{2020}]{brown_mean_2020}
Brown, D.~G. and Owen, M. (2020).
\newblock Mean and variance of phylogenetic trees.
\newblock {\em Syst. Biol.}, 69(1):139--154.

\bibitem[\protect\astroncite{Buneman}{1971}]{buneman_recovery_1971}
Buneman, P. (1971).
\newblock The recovery of trees from measures of dissimilarity.
\newblock In {\em Mathematics in the Archeological and Historical Sciences:
  {P}roceedings of the {A}nglo-{R}omanian Conference, {M}amaia, 1970}, pages
  387--395.

\bibitem[\protect\astroncite{Buneman}{1974}]{buneman_note_1974}
Buneman, P. (1974).
\newblock A note on the metric properties of trees.
\newblock {\em J. Comb. Theory B}, 17(1):48--50.

\bibitem[\protect\astroncite{Eltzner and
  Huckemann}{2019}]{EltznereHuckemann2019}
Eltzner, B. and Huckemann, S.~F. (2019).
\newblock A smeary central limit theorem for manifolds with application to
  high-dimensional spheres.
\newblock {\em Ann. Stat.}, 47(6):3360--3381.

\bibitem[\protect\astroncite{Evans et~al.}{2006}]{evans2006rayleigh}
Evans, S.~N., Pitman, J., and Winter, A. (2006).
\newblock Rayleigh processes, real trees, and root growth with re-grafting.
\newblock {\em Probab. Theory Rel.}, 134(1):81--126.

\bibitem[\protect\astroncite{Felsenstein}{2003}]{felsenstein_inferring_2003}
Felsenstein, J. (2003).
\newblock {\em Inferring phylogenies}.
\newblock Oxford University Press.

\bibitem[\protect\astroncite{Fr\'echet}{1948}]{F48}
Fr\'echet, M. (1948).
\newblock Les \'el\'ements al\'eatoires de nature quelconque dans un espace
  distanci\'e.
\newblock {\em Ann. I. H. Poincar\'e}, 10(4):215--310.

\bibitem[\protect\astroncite{Garba et~al.}{2018}]{garba_probabilistic_2018}
Garba, M.~K., Nye, T. M.~W., and Boys, R.~J. (2018).
\newblock Probabilistic distances between trees.
\newblock {\em Syst. Biol.}, 67(2):320--327.

\bibitem[\protect\astroncite{Garba et~al.}{2021}]{garba_information_2021}
Garba, M.~K., Nye, T. M.~W., Lueg, J., and Huckemann, S.~F. (2021).
\newblock Information geometry for phylogenetic trees.
\newblock {\em J. Math. Biol.}, 82(3):19.

\bibitem[\protect\astroncite{Hotz and Huckemann}{2015}]{HH15}
Hotz, T. and Huckemann, S. (2015).
\newblock Intrinsic means on the circle: {U}niqueness, locus and asymptotics.
\newblock {\em Ann. I. Stat. Math.}, 67(1):177--193.

\bibitem[\protect\astroncite{Hotz et~al.}{2013}]{hotz_sticky_2013}
Hotz, T., Skwerer, S., Huckemann, S., Le, H., Marron, J.~S., Mattingly, J.~C.,
  Miller, E., Nolen, J., Owen, M., and Patrangenaru, V. (2013).
\newblock Sticky central limit theorems on open books.
\newblock {\em Ann. Appl. Probab.}, 23(6):2238--2258.

\bibitem[\protect\astroncite{Hu and Kirk}{1978}]{hu_local_1978}
Hu, T. and Kirk, W.~A. (1978).
\newblock Local contractions in metric spaces.
\newblock {\em P. Am. Math. Soc.}, 68(1):121--124.

\bibitem[\protect\astroncite{Huckemann and
  Eltzner}{2020}]{huckemann_statistical_2020}
Huckemann, S. and Eltzner, B. (2020).
\newblock Statistical methods generalizing principal component analysis to
  non-{Euclidean} spaces.
\newblock In Grohs, P., Holler, M., and Weinmann, A., editors, {\em Handbook of
  Variational Methods for Non-Linear Geometric Data}, pages 317--338. Springer
  International Publishing.

\bibitem[\protect\astroncite{Huckemann et~al.}{2015}]{huckemann_sticky_2015}
Huckemann, S.~F., Mattingly, J., Miller, E., and Nolen, J. (2015).
\newblock Sticky central limit theorems at isolated hyperbolic planar
  singularities.
\newblock {\em Electron. J. Probab.}, 20:1--34.

\bibitem[\protect\astroncite{Kim}{2000}]{kim_slicing_2000}
Kim, J. (2000).
\newblock Slicing hyperdimensional oranges: {T}he geometry of phylogenetic
  estimation.
\newblock {\em Mol. Phylogenet. Evol.}, 17(1):58--75.

\bibitem[\protect\astroncite{Lang}{1999}]{lang_fundamentals_1999}
Lang, S. (1999).
\newblock {\em Fundamentals of Differential Geometry}.
\newblock Graduate Texts in Mathematics. Springer.

\bibitem[\protect\astroncite{Lee}{2018}]{lee_introduction_2018}
Lee, J.~M. (2018).
\newblock {\em Introduction to {R}iemannian Manifolds}.
\newblock Graduate Texts in Mathematics. Springer, 2 edition.

\bibitem[\protect\astroncite{Lueg et~al.}{2021}]{lueg_wald_2021}
Lueg, J., Garba, M.~K., Nye, T. M.~W., and Huckemann, S.~F. (2021).
\newblock Wald space for phylogenetic trees.
\newblock In Nielsen, F. and Barbaresco, F., editors, {\em Geometric Science of
  Information}, Lecture Notes in Computer Science, pages 710--717. Springer
  International Publishing.

\bibitem[\protect\astroncite{Maddison}{1997}]{maddison1997gene}
Maddison, W.~P. (1997).
\newblock Gene trees in species trees.
\newblock {\em Syst. Biol.}, 46(3):523--536.

\bibitem[\protect\astroncite{Marron and Alonso}{2014}]{marron2014overview}
Marron, J.~S. and Alonso, A.~M. (2014).
\newblock Overview of object oriented data analysis.
\newblock {\em Biometrical J.}, 56(5):732--753.

\bibitem[\protect\astroncite{Miller et~al.}{2015}]{miller_polyhedral_2015}
Miller, E., Owen, M., and Provan, J.~S. (2015).
\newblock Polyhedral computational geometry for averaging metric phylogenetic
  trees.
\newblock {\em Adv. Appl. Math.}, 68:51--91.

\bibitem[\protect\astroncite{Monod et~al.}{2022}]{Monod2022}
Monod, A., Lin, B., Yoshida, R., and Kang, Q. (2022).
\newblock Tropical geometry of phylogenetic tree space: a statistical
  perspective.
\newblock {\em arXiv preprint arXiv:1805.12400}.

\bibitem[\protect\astroncite{Moulton and Steel}{2004}]{moulton_peeling_2004}
Moulton, V. and Steel, M. (2004).
\newblock Peeling phylogenetic {`oranges'}.
\newblock {\em Adv. App. Math.}, 33(4):710--727.

\bibitem[\protect\astroncite{Nye}{2011}]{nye_principal_2011}
Nye, T. M.~W. (2011).
\newblock Principal components analysis in the space of phylogenetic trees.
\newblock {\em Ann. Stat.}, 39(5):2716--2739.

\bibitem[\protect\astroncite{Nye}{2014}]{nye_algorithm_2014}
Nye, T. M.~W. (2014).
\newblock An algorithm for constructing principal geodesics in phylogenetic
  treespace.
\newblock {\em IEEE-ACM T. Comput. Bi.}, 11(2):304--315.

\bibitem[\protect\astroncite{Nye et~al.}{2016}]{nye_principal_2016}
Nye, T. M.~W., Tang, X., Weyenberg, G., and Yoshida, R. (2016).
\newblock Principal component analysis and the locus of the {Fr\'echet} mean in
  the space of phylogenetic trees.
\newblock {\em Biometrika}, 104(4).

\bibitem[\protect\astroncite{Owen and Provan}{2011}]{owen_fast_2011}
Owen, M. and Provan, J.~S. (2011).
\newblock A fast algorithm for computing geodesic distances in tree space.
\newblock {\em IEEE-ACM T. Comput. Bi.}, 8(1):2--13.

\bibitem[\protect\astroncite{Pflaum}{2001}]{pflaum_analytic_2001}
Pflaum, M.~J. (2001).
\newblock {\em Analytic and Geometric Study of Stratified Spaces}, volume 1768
  of {\em Lecture Notes in Mathematics}.
\newblock Springer.

\bibitem[\protect\astroncite{Rumpf and Wirth}{2015}]{rumpf2015variational}
Rumpf, M. and Wirth, B. (2015).
\newblock Variational time discretization of geodesic calculus.
\newblock {\em {IMA} J. Numer. Anal.}, 35(3):1011--1046.

\bibitem[\protect\astroncite{Schmidt et~al.}{2006}]{schmidt_shape_2006}
Schmidt, F., Clausen, M., and Cremers, D. (2006).
\newblock Shape matching by variational computation of geodesics on a manifold.
\newblock In Franke, K., M{\"u}ller, K.-R., Nickolay, B., and Sch{\"a}fer, R.,
  editors, {\em Pattern Recognition}, volume 4174 of {\em Lecture Notes in
  Computer Science}, pages 142--151. DAGM (German Association for Pattern
  Recognition), Springer.

\bibitem[\protect\astroncite{Semple and
  Steel}{2003}]{semple_phylogenetics_2003}
Semple, C. and Steel, M. (2003).
\newblock {\em Phylogenetics}.
\newblock Oxford Lecture Series in Mathematics and Its Applications. Oxford
  University Press.

\bibitem[\protect\astroncite{Speyer and Sturmfels}{2004}]{Speyer2004}
Speyer, D. and Sturmfels, B. (2004).
\newblock The tropical {G}rassmannian.
\newblock {\em Adv. Geom.}, 4(3):389--411.

\bibitem[\protect\astroncite{Sturm}{2003}]{sturm_probability_2003}
Sturm, K.-T. (2003).
\newblock Probability measures on metric spaces of nonpositive curvature.
\newblock In Auscher, P., Coulhon, T., and Grigor’yan, A., editors, {\em Heat
  Kernels and Analysis on Manifolds, Graphs, and Metric Spaces: Lecture Notes
  from a Quarter Program on Heat Kernels, Random Walks, and Analysis on
  Manifolds and Graphs: April 16-July 13, 2002, Emile Borel Centre of the Henri
  Poincar{\'e} Institute, Paris, France}, volume 338 of {\em Contemporary
  Mathematics}. American Mathematical Society.

\bibitem[\protect\astroncite{Suchard}{2005}]{suchard2005stochastic}
Suchard, M.~A. (2005).
\newblock Stochastic models for horizontal gene transfer: taking a random walk
  through tree space.
\newblock {\em Genetics}, 170(1):419--431.

\bibitem[\protect\astroncite{Willis}{2016}]{willis_confidence_2016}
Willis, A. (2016).
\newblock Confidence sets for phylogenetic trees.
\newblock {\em J. Am. Stat. Assoc.}, 114.

\bibitem[\protect\astroncite{Yang}{2006}]{Yang2006}
Yang, Z. (2006).
\newblock {\em Computational Molecular Evolution}.
\newblock Oxford University Press.

\end{thebibliography}

\end{document}